\newcommand{\alg}{\mathbf}
\newcommand{\class}{\mathsf}
\newcommand{\set}[2]{\{ #1 \mid #2 \}}
\newcommand{\pair}[2]{\langle #1, #2 \rangle}
\newcommand{\tuple}[1]{\overline{#1}}
\newcommand{\card}[1]{| #1 |}
\newcommand{\assign}{\mathrel{:=}}
\newcommand{\equals}{\approx}
\newcommand{\iso}{\cong}
\newcommand{\seq}{\triangleright}
\newcommand{\into}{\hookrightarrow}
\newcommand{\onto}{\twoheadrightarrow}
\newcommand{\down}{{\downarrow}}
\newcommand{\up}{{\uparrow}}
\newcommand{\Power}{\mathcal{P}}
\newcommand{\UpFin}[1]{\mathrm{Up}_{\omega}(#1)}
\newcommand{\Multi}[1]{\mathbb{N}[#1]}
\newcommand{\Down}[1]{\mathcal{D}(#1)}
\newcommand{\DM}[1]{\Down{\Multi{#1}}}
\newcommand{\0}{\mathsf{0}}
\newcommand{\1}{\mathsf{1}}
\newcommand{\id}{\mathrm{id}}
\newcommand{\sast}{/_{\ast}}
\newcommand{\unit}{\eta}
\newcommand{\embed}{\iota}
\newcommand{\embedof}[1]{\iota_{\scriptscriptstyle#1}}
\DeclareMathOperator{\Mon}{Mon}
\DeclareMathOperator{\End}{End}
\DeclareMathOperator{\Gen}{Gen}
\DeclareMathOperator{\ExpEnd}{ExpEnd}
\DeclareMathOperator{\Con}{Con}
\DeclareMathOperator{\Sg}{Sg}
\newcommand{\Free}[1]{\alg{F}(#1)}
\newcommand{\FreeIn}[2]{\alg{F}_{#1}(#2)}
\newcommand{\Fm}{\alg{Fm}}
\newcommand{\Nat}{\mathbb{N}}
\newcommand{\Two}{\mathbf{2}}
\newcommand{\A}{\alg{A}}
\newcommand{\Ad}{\A_{\mathrm{d}}}
\newcommand{\B}{\alg{B}}
\newcommand{\Bd}{\B_{\mathrm{d}}}
\renewcommand{\P}{\alg{P}}
\newcommand{\Q}{\alg{Q}}
\newcommand{\R}{\alg{R}}
\newcommand{\hd}{h_{\mathrm{d}}}
\newtheorem{theorem}{Theorem}[section]
\newtheorem{lemma}[theorem]{Lemma}
\newtheorem{proposition}[theorem]{Proposition}
\newtheorem{corollary}[theorem]{Corollary}
\newtheorem{fact}[theorem]{Fact}
\newtheorem*{theorem*}{Theorem}
\newtheorem*{corollary*}{Corollary}
\theoremstyle{definition}
\newtheorem{definition}[theorem]{Definition}
\newtheorem{example}[theorem]{Example}
\theoremstyle{remark}
\title{Equivalence of multiset-based consequence~relations}
\author[A. Madanshekaf]{Ali Madanshekaf}
\address{Semnan University, Department of Mathematics}
\email{amadanshekaf@semnan.ac.ir}
\author[A. P\v{r}enosil]{Adam P\v{r}enosil}
\address{Universitat de Barcelona, Departament de Filosofia}
\email{adam.prenosil@gmail.com}
\author[Z. K. Seresti]{Zeinab Khanjanzadeh Seresti}
\address{Semnan University, Department of Mathematics}
\email{z.khanjanzadeh@gmail.com}
\author[C. Tsinakis]{Constantine Tsinakis}
\address{Vanderbilt University, Department of Mathematics}
\email{constantine.tsinakis@vanderbilt.edu}
\thanks{Parts of this research have done while the first-named author was spending his sabbatical leave at the Department of Mathematics,  Vanderbilt University (VU), Nashvile, TN, USA. This author expresses his thanks for the warm hospitality and facilities provided by Prof. Constantine Tsinakis and Department of Mathematics of VU. He is greatly indebted to  Semnan University for its financial support during the sabatical. The work of the second author was funded by the grant 2021 BP 00212 of the grant agency AGAUR of the Generalitat de Catalunya.}
\begin{document}

\begin{abstract}
  The pioneering work of Blok and J\'{o}nsson and its further development by Galatos and Tsinakis initiated an abstract study of consequence relations using the tools of module theory, where consequence relations over all types of syntactic objects are put on an equal footing. However, the assumption that in a consequence relation the premises form merely a set, as opposed to a more complicated structure, is still retained. An attempt to extend this framework to account for inferentially substructural generalizations of consequence relations, where the premises have the structure of a finite multiset, was recently made by Cintula, Gil-F\'{e}rez, Moraschini, and Paoli. In this paper, we develop a different inferentially substructural generalization of the work of Galatos and Tsinakis, where we instead assume that the premises have the structure of a set of finite multisets. This leads a somewhat smoother framework which, unlike that of Cintula et al., covers the original theory of Galatos and Tsinakis as a special case.
\end{abstract}

\keywords{Algebraic logic, substructural logic, consequence relations, substructural consequence, quantales, multiset-based consequence}

\maketitle

\section{Introduction}

  The goal of the present paper is to develop an abstract theory of consequence relations in the spirit of the module-theoretic approach of Galatos and Tsinakis~\cite{GT} which would allow for substructural behavior at the inferential level. That is, we aim for an abstract theory of consequence relations where the premises have more structure than merely that of a set: for instance, they may have the structure of a list or a multiset. One such framework was already introduced by Cintula et al.~\cite{CGMP}, who directly replace sets of premises by finite multisets of premises. In this paper, we instead replace sets of premises by \emph{sets of finite multisets} of premises. This leads to a framework which is in some respects smoother than that of Cintula et al.~\cite{CGMP}, while at the same time subsuming it as a special case.

  In this introductory section, we acquaint the reader with the existing abstract approach to set-based consequence relations due to Blok and J\'{o}nsson~\cite{BJ} and Galatos and Tsinakis~\cite{GT}. We then sketch the existing attempt of Cintula et al.~\cite{CGMP} to extend this approach to multiset-based consequence relations, and clarify its relation to the approach taken in the present paper. The rest of the paper is then devoted to rebuilding the theory of Galatos and Tsinakis in the context of multiset-based consequence relations. In contrast to Cintula et al., we recover the original framework of Galatos and Tsinakis as a special (idempotent) case.

\subsection{Consequence relations in abstract algebraic logic}

  The standard point of departure in abstract algebraic logic (see e.g.~\cite{Font16}) is that the basic syntactic objects of a propositional logic have the structure of an algebra, namely they form the algebra of formulas~$\Fm$, which is the nothing but the absolutely free algebra over a given infinite set of generators in a given algebraic signature. A logic is then defined as a \emph{structural consequence relation} on $\Fm$. This is a relation $\vdash$ between sets of formulas and formulas (written as $\Gamma \vdash \varphi$) which satisfies the following conditions:
\begin{enumerate}[(i)]
\item Reflexivity: if $\gamma \in \Gamma \subseteq \Fm$, then $\Gamma \vdash \gamma$.
\item Transitivity: if $\Gamma \vdash \delta$ for each $\delta \in \Delta \subseteq \Fm$ and $\Delta \vdash \varphi$, then $\Gamma \vdash \varphi$.
\item Structurality: if $\Gamma \vdash \varphi$, then $\sigma[\Gamma] \vdash \sigma(\varphi)$ for each substitution~$\sigma$.
\end{enumerate}
  We can equivalently view $\vdash$ as a binary relation on the powerset $\Power(Fm)$ if we define
\begin{align*}
  \Gamma \vdash \Delta \iff \Gamma \vdash \delta \text{ for all } \delta \in \Delta.
\end{align*}
  The relations $\vdash$ of this form on $\Power(Fm)$ are axiomatized by the following conditions:
\begin{enumerate}[(i)]
\item Reflexivity: if $\Delta \subseteq \Gamma \subseteq \Fm$, then $\Gamma \vdash \Delta$,
\item Transivity: if $\Gamma \vdash \Delta$ and $\Delta \vdash \Phi$, then $\Gamma \vdash \Phi$,
\item Collection: if $\Gamma \vdash \Delta_{i}$ for $\Delta_{i} \subseteq \Fm$ with $i \in I$, then $\Gamma \vdash \bigcup_{i \in I} \Delta_{i}$,
\item Structurality: if $\Gamma \vdash \Delta$, then $\sigma[\Gamma] \vdash \sigma[\Delta]$ for each substitution~$\sigma$.
\end{enumerate}

  Each structural consequence relation can equivalently be described as a \emph{structural closure operator} on the powerset $\Power(\Fm)$: a closure operator $C$ on $\Power(\Fm)$ such that if $T \in \Power(\Fm)$ is a closed set of $C$, then so is $\sigma^{-1}[T]$ for each substitution $\sigma$. The bijective correspondence is straightforward:
\begin{align*}
  & C(\Gamma) \assign \set{\varphi \in \Fm}{\Gamma \vdash \varphi}, & & \Gamma \vdash \varphi \iff \varphi \in C(\Gamma).
\end{align*}

  The advantage of the above framework is that it is very easy to deploy: one does not need to digest pages of theory in order to understand what a logic~is. Its disadvantage is that it is restricted to consequence relations specifically over formulas. It therefore does not subsume consequence relations over equations, sequents, or other types of syntactic objects. In~particular, this framework is not suitable for relating consequence relations over different types of syntactic objects, which is precisely what one wants to do when talking about algebraizable logics: algebraizability simply means being in a precise sense equivalent, \emph{qua} a consequence relation, to an equational consequence relation. Granted, the particular topic of algebraizability can be handled in an \emph{ad hoc} way by allowing for consequence relations over $k$-tuples of formulas, which is the route taken by Blok and Pigozzi~\cite{BP01}, but this is not at all a principled solution to the problem of relating consequence relations over different types of syntactic objects.

\subsection{A module-theoretic account of consequence}

  The first key step towards a theory of consequence relations over arbitrary syntactic objects was taken by Blok and J\'{o}nsson~\cite{BJ}. Their insight was that consequence relations can in fact be set up on a much more meager basis than an algebra of formulas $\Fm$. Namely, it suffices to have the structure of a \emph{monoid act} on the syntactic objects: a general monoid~$\alg{M}$ (to be thought of as the monoid of substitutions) with a left action $\ast$ (to be thought of as the application of a substitution) on an arbitrary set~$X$ (to be thought of as the set of formulas). This has the benefit of subsuming structural consequence relations over all sorts of syntactic objects besides formulas, such as equations, sequents, hypersequents, tree-sequents and so on. Such syntactic objects do not naturally come with the structure of an algebra, but they do come with an obvious action of the monoid of substitutions. For example, if the set of sequents $Seq$ consists of pairs of finite tuples of formulas in $Fm$ and the monoid of substitutions $\alg{M}$ consists of the endomorphisms of $\Fm$, then the natural action $\ast$ of $\alg{M}$ on $Seq$ is the following:
\begin{align*}
  & \sigma \ast (\gamma_{1}, \dots, \gamma_{m} \seq \delta_{1}, \dots, \delta_{n}) \assign \sigma(\gamma_{1}), \dots, \sigma(\gamma_{m}) \seq \sigma(\delta_{1}), \dots, \sigma(\delta_{n}).
\end{align*}
  The action allows us to talk about preimages of subsets of $X$ with respect to a substitution, and therefore to define the notion of a structural closure operator. Equivalently, a closure operator $C$ on a set $X$ is structural if $\sigma \ast C(Y) \subseteq C(\sigma \ast Y)$ for each $Y \subseteq X$, where $\sigma \ast Y \assign \set{\sigma \ast y}{y \in Y}$.

  The second key step was taken by Galatos and Tsinakis~\cite{GT}, extending the above work of Blok and J\'{o}nsson. Instead of building a theory of consequence relations around a monoid~$\alg{M}$ acting on a set~$X$, Galatos and Tsinakis build it around a quantale~$\alg{A}$ acting on a complete join semilattice $\alg{P}$. We remind the reader that a (multiplicative) \emph{quantale} has the structure of both a monoid $\langle A, \cdot, \1 \rangle$ and a complete join semilattice $\langle A, \bigvee \rangle$ such that 
\begin{align*}
  & x \cdot \bigvee_{i \in I} y_{i} = \bigvee_{i \in I} (x \cdot y_{i}), & & \left( \bigvee_{i \in I} x_{i} \right) \cdot y = \bigvee_{i \in I} (x_{i} \cdot y).
\end{align*}
  The action of $\alg{A}$ on $\alg{P}$ is required to preserve arbitrary joins in both coordinates:
\begin{align*}
  & \left( \bigvee_{i \in I} a_{i} \right) \ast x = \bigvee_{i \in I} (a_{i} \ast x), & & a \ast \bigvee_{i \in I} x_{i} = \bigvee_{i \in I} (a \ast x_{i}).
\end{align*}
  A complete join semilattice $\alg{P}$ equipped with such an action of a quantale $\alg{A}$ is called an \emph{$\alg{A}$-module} by Galatos and Tsinakis.

  The complete semilattice structure is already implicit in the work of Blok and J\'{o}nsson: an action of a monoid $\alg{M}$ on a set $X$ extends uniquely to an action of $\alg{M}$ on the powerset $\Power(X)$ which commutes with arbitrary unions in the second coordinate. This further extends uniquely to a module action of $\Power(\alg{M})$ on $\Power(X)$, i.e.\ an action which also commutes with arbitrary unions in the first coordinate, if we observe that each monoid $\alg{M}$ naturally yields a quantale $\Power(\alg{M})$ with the operations
\begin{align*}
  & X \cdot Y \assign \set{x \cdot y}{x \in X \text{ and } y \in Y}, & & \bigvee_{i \in I} X_{i} \assign \bigcup_{i \in I} X_{i},
\end{align*}
  The abstraction step of Galatos and Tsinakis then consists in replacing $\Power(\alg{M})$ and $\Power(X)$ by a general quantale $\alg{A}$ and a general complete join semilattice $\alg{P}$.

  This abstraction serves two purposes. Firstly, it allows us to import ideas from the classical theory of modules over rings (such as the description of cyclic projective modules). Secondly, it gives us another equivalent way of looking at logics, namely as \emph{quotients of modules}. Consider an $\alg{A}$-module~$\alg{P}$. A closure operator $\gamma$ on $\alg{P}$ is called \emph{structural} if $a \ast \gamma(x) \leq \gamma(a \ast x)$. The fixpoints of a structural closure operator $\gamma$ on $\alg{P}$ form a complete join semilattice $\alg{P}_{\gamma}$ which inherits the $\alg{A}$-module structure of $\alg{P}$ if we take the action $a \ast_{\gamma} x \assign \gamma(a \ast x)$. The map $\gamma$ is then a surjective homomorphism of $\alg{A}$-modules: a homomorphism of complete join semilattices which commutes with the $\alg{A}$-module action. Conversely, each surjective homomorphism of $\alg{A}$-modules $h\colon \alg{P} \onto \alg{Q}$ determines a structural closure operator on $\alg{P}$, namely
\begin{align*}
  \gamma(x) \assign \bigvee \set{y \in \alg{P}}{h(y) = h(x)} = \bigvee \set{y \in \alg{P}}{h(y) \leq h(x)}.
\end{align*}
  This yields a bijective correspondence between the structural closure operators on an $\alg{A}$-module $\alg{P}$ and the module quotients of $\alg{P}$ in the sense of universal algebra.

  The above framework can now be employed to smoothly handle equivalences between consequence relations over syntactic objects of different types. Consider a quantale $\alg{A}$ and two $\alg{A}$-modules $\alg{P}$ and $\alg{Q}$ (to be thought of as encoding two different types of syntactic objects). For instance, in the context of algebraization of propositional logics, $\alg{A}$ is the powerset quantale of the monoid of substitutions (the endomorphism monoid of~$\Fm$), $\alg{P}$ is the complete join semilattice of sets of formulas $\Power(Fm)$, and $\alg{Q}$ is the complete join semilattice of sets of equations $\Power(Eq)$. Consequence relations on objects of syntactic types $\alg{P}$ and $\alg{Q}$ correspond to structural closure operators $\gamma$ and $\delta$ on $\alg{P}$ and $\alg{Q}$, respectively. For instance, logical consequence relations $\vdash$ correspond to structural closure operators $\gamma$ on $\Power(Fm)$ (namely, closure under logical consequence in $\vdash)$, while equational consequence relations $\vDash$ correspond to structural closure operators $\delta$ on $\Power(Eq)$ (namely, closure under equational consequence in $\vDash$).

  With this intended interpretation of $\alg{P}$ and $\alg{Q}$ and $\gamma$ and $\delta$ in mind, the results of~\cite{GT} concerning equivalences of consequence relations are the following.

\begin{theorem*}
  The following are equivalent for each $\alg{A}$-module $\alg{P}$:
\begin{enumerate}[(i)]
\item Each embedding of modules $f\colon \alg{P}_{\gamma} \into \alg{Q}_{\delta}$ for each pair of structural closure operators $\gamma$ on $\alg{P}$ and $\delta$ on $\alg{Q}$ is induced by a homomorphism of modules $\tau\colon \alg{P} \to \alg{Q}$ in the sense that $f \circ \gamma = \delta \circ \tau$.
\item $\alg{P}$ is a projective module in the sense that each homomorphism of modules $h\colon \alg{P} \to \alg{Q}$ lifts along each surjective homomorphism of modules $g\colon \alg{R} \onto \alg{Q}$ to a homomorphism of modules $h^{\sharp}\colon \alg{P} \to \alg{R}$ such that $h = g \circ h^{\sharp}$:
\[
\begin{tikzcd}
 & & \alg{R} \arrow[d,"g",->>] \\
 & \alg{P} \arrow[r,"h"] \arrow[ur,"h^{\sharp}",dashed] & \alg{Q}
\end{tikzcd}
\]
\end{enumerate}
\end{theorem*}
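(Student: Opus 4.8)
The plan is to prove the two implications separately. The engine in both directions is the bijective correspondence between structural closure operators and module quotients recalled above, together with the fact --- established along the way below --- that every homomorphism of $\A$-modules factors as a surjection followed by an embedding, with the surjection a quotient map. Throughout, for a structural closure operator $\gamma$ on a module I write $\gamma$ also for the associated surjection onto the module of fixpoints, exactly as in the statement of the theorem.

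The implication (ii) $\Rightarrow$ (i) is the short one, and it is where projectivity is used directly. Given structural closure operators $\gamma$ on $\P$ and $\delta$ on $\Q$ and an embedding $f\colon\P_\gamma\into\Q_\delta$, I would simply look at the homomorphism $f\circ\gamma\colon\P\to\Q_\delta$. Since $\delta\colon\Q\onto\Q_\delta$ is a surjective homomorphism of modules, projectivity of $\P$ produces a homomorphism $\tau\colon\P\to\Q$ with $\delta\circ\tau = f\circ\gamma$, which is precisely the conclusion of (i). Note that injectivity of $f$ plays no role here.

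For (i) $\Rightarrow$ (ii), let $h\colon\P\to\Q$ be a homomorphism and $g\colon\R\onto\Q$ a surjective homomorphism; the goal is to lift $h$ through $g$. First I would factor $h$ through an embedding: set $\gamma(x)\assign\bigvee\set{y\in\P}{h(y)\leq h(x)}$, and check, by the same computation that validates the operator displayed just before the theorem, that this $\gamma$ is a structural closure operator on $\P$ (the key observation being $h(\gamma(x)) = h(x)$, which yields idempotence and at the same time shows that $\gamma(x)\mapsto h(x)$ is a well-defined embedding of modules $m\colon\P_\gamma\into\Q$ preserving arbitrary joins and the action), so that $h = m\circ\gamma$. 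Next, by the correspondence recalled above the surjection $g$ arises from a structural closure operator $\delta$ on $\R$ together with an isomorphism $\iota\colon\R_\delta\iso\Q$ satisfying $\iota\circ\delta = g$. Then $\iota^{-1}\circ m\colon\P_\gamma\into\R_\delta$ is an embedding, so hypothesis (i), applied to $\gamma$ on $\P$ and $\delta$ on $\R$, supplies a homomorphism $\tau\colon\P\to\R$ with $\iota^{-1}\circ m\circ\gamma = \delta\circ\tau$; composing with $\iota$ gives $h = m\circ\gamma = \iota\circ\delta\circ\tau = g\circ\tau$, so $\tau$ is the desired lift $h^{\sharp}$ and $\P$ is projective.

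The diagram chasing aside, the one step that calls for actual verification is the factorization in the first step of (i) $\Rightarrow$ (ii): one must confirm that the $\gamma$ built from an \emph{arbitrary} (not necessarily surjective) homomorphism $h$ is genuinely a structural closure operator --- so that $\P_\gamma$ is again an $\A$-module and $\gamma\colon\P\onto\P_\gamma$ a module homomorphism --- and that the induced $m$ is a well-defined module embedding, i.e.\ preserves arbitrary joins and the quantale action. Both points reduce to $h$ preserving arbitrary joins and to the action preserving joins (hence order), so I expect this to be the main, though modest, obstacle; everything else is bookkeeping through the closure-operator/quotient dictionary.
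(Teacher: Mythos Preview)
Your proposal is correct and follows essentially the same route as the paper's proof. For (ii)~$\Rightarrow$~(i) both arguments simply apply projectivity to the map $f\circ\gamma$ and the surjection $\delta\colon\Q\onto\Q_{\delta}$; for (i)~$\Rightarrow$~(ii) both factor the given $h$ as a quotient $\gamma\colon\P\onto\P_{\gamma}$ followed by an embedding into the codomain, identify the given surjection $g$ with a quotient $\delta\colon\R\onto\R_{\delta}$ via the nucleus/congruence correspondence, and then invoke~(i) --- the only difference is that the paper does this last identification tacitly by writing the target as $\Q_{\delta}$ from the outset, whereas you carry the isomorphism $\iota$ explicitly.
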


\begin{corollary*}
  Let $\alg{P}$ and $\alg{Q}$ be projective $\alg{A}$-modules, and let $\gamma$ and $\delta$ be structural closure operators on $\alg{P}$ and $\alg{Q}$, respectively. Then each isomorphism of modules given by $f\colon \alg{P}_{\gamma} \to \alg{Q}_{\delta}$ and $g\colon \alg{Q}_{\delta} \to \alg{P}_{\gamma}$ is induced by a pair of homomorphisms of modules $\tau\colon \alg{P} \to \alg{Q}$ and $\rho\colon \alg{Q} \to \alg{P}$ in the sense that $f \circ \gamma = \tau \circ \delta$ and $g \circ \delta = \gamma \circ \rho$:
\[
\begin{tikzcd}
  & \alg{P} \arrow[r,"\tau",shift left,dashed] \arrow[d,"\gamma",->>] & \alg{Q} \arrow[l,"\rho",shift left,dashed] \arrow[d,"\delta",->>] \\
  & \alg{P}_{\gamma} \arrow[r,"f",shift left] & \alg{Q}_{\delta} \arrow[l,"g",shift left]
\end{tikzcd}
\]
\end{corollary*}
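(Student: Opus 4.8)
The plan is to derive the corollary directly from the preceding theorem by invoking it twice. The first thing to notice is that an isomorphism of $\alg{A}$-modules is in particular an embedding of $\alg{A}$-modules, so both $f\colon \alg{P}_{\gamma} \to \alg{Q}_{\delta}$ and $g\colon \alg{Q}_{\delta} \to \alg{P}_{\gamma}$ qualify as embeddings of module quotients, and the hypothesis of the theorem becomes applicable to each of them.

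Next I would apply the theorem to the module $\alg{P}$. Since $\alg{P}$ is projective it satisfies condition~(ii), hence also condition~(i); instantiating (i) with the structural closure operators $\gamma$ on $\alg{P}$ and $\delta$ on $\alg{Q}$ and with the embedding $f$, I obtain a homomorphism of modules $\tau\colon \alg{P} \to \alg{Q}$ with $f \circ \gamma = \delta \circ \tau$, i.e.\ $f$ is induced by $\tau$. Then I would apply the theorem a second time, now to the module $\alg{Q}$ (projective by hypothesis), with the roles of $\alg{P}$ and $\alg{Q}$---and correspondingly of $\gamma$ and $\delta$---interchanged: instantiating condition~(i) for $\alg{Q}$ with the embedding $g$ yields a homomorphism $\rho\colon \alg{Q} \to \alg{P}$ with $g \circ \delta = \gamma \circ \rho$, i.e.\ $g$ is induced by $\rho$. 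Together $\tau$ and $\rho$ are exactly the pair of commuting squares asserted in the statement (here I read the commutativity conditions as $f \circ \gamma = \delta \circ \tau$ and $g \circ \delta = \gamma \circ \rho$, as forced by the displayed diagram and by the phrasing of condition~(i)).

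There is no substantive obstacle: the whole content lies in the theorem, and the corollary is a two-line deduction from it. The only things to be careful about are the bookkeeping involved in transposing the theorem when the two modules are swapped, and the fact that the projectivity hypothesis is genuinely needed for \emph{both} modules---once to produce $\tau$ and once to produce $\rho$. As an optional closing remark I would point out that the homomorphisms obtained this way automatically satisfy $\gamma \circ \rho \circ \tau = \gamma$ and $\delta \circ \tau \circ \rho = \delta$: indeed $\gamma \circ \rho \circ \tau = g \circ \delta \circ \tau = g \circ f \circ \gamma = \gamma$, using $g \circ f = \id_{\alg{P}_{\gamma}}$, and symmetrically for the other identity; thus $\tau$ and $\rho$ realise the isomorphism pair $(f,g)$ at the level of the ambient modules, even though the corollary as stated does not demand this.
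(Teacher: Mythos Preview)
Your proposal is correct and matches the paper's approach: the corollary is stated there without proof, as an immediate consequence of applying the preceding theorem once to $\alg{P}$ (yielding $\tau$) and once to $\alg{Q}$ (yielding $\rho$), exactly as you describe. Your reading of the commutativity condition as $f \circ \gamma = \delta \circ \tau$ rather than the printed $\tau \circ \delta$ is also right, and your optional closing remark about $\gamma \circ \rho \circ \tau = \gamma$ is precisely the computation the paper later uses in the proof of the subsequent theorem.
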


  In other words, in projective modules abstract equivalences between structural closure operators always have concrete syntactic witnesses, namely the maps $\tau$ and~$\rho$. In the case of $\alg{P} \assign \Power(Fm)$ and $\alg{Q} \assign \Power(Eq)$, we get the following result: each isomorphism between lattices of logical and equational theories which is compatible with substitutions arises from some translations $\tau\colon \Power(Fm) \to \Power(Eq)$ and $\rho\colon \Power(Eq) \to \Power(Fm)$ which satisfy the conditions
\begin{align*}
  \Gamma \vdash \Delta & \iff \tau[\Gamma] \vDash \tau[\Delta], & \Gamma \vdash \rho[\tau[\Gamma]] \vdash \Gamma, \\
  E \vDash F & \iff \rho[E] \vdash \rho[F], & E \vDash \tau[\rho[E]] \vDash E.
\end{align*}
  We therefore recover the theorem of Blok and Pigozzi~\cite[Theorem~3.7(ii)]{BP89} stating that a logic $L$ is algebraizable with a class of algebras $\class{K}$ as its equivalent algebraic semantics if and only if there is an isomorphism between the logical theories of $L$ and the equational theories of $\class{K}$ which commutes with inverse substitutions.\footnote{Blok and Pigozzi~\cite{BP89} require $\class{K}$ to be a quasivariety in their formulation of the theorem, but this restriction is in fact immaterial.} Using our current language of modules, we can say that Blok and Pigozzi showed that algebraizability is a purely module-theoretic property.

  In order to apply the above corollary to concrete cases, we need to be able to identify projective modules. The following theorem tells us how to do so, at least in the cyclic case. For example, it identifies $\Power(Fm)$ and $\Power(Eq)$ as cyclic projective modules, and it identifies the module $\Power(Seq)$ of sets of sequents as a projective module by virtue of being a coproduct of projective modules.

  As in ordinary module theory, we say that an $\alg{A}$-module $\alg{P}$ is \emph{cyclic} if there is some $u \in \alg{P}$ such that each $x \in \alg{P}$ has the form $x = a \ast u$ for some $a \in \alg{A}$.

\begin{theorem*}
  The following are equivalent for each $\alg{A}$-module $\alg{P}$:
\begin{enumerate}[(i)]
\item $\alg{P}$ is a cyclic projective module.
\item $\alg{P}$ is isomorphic to the module $\alg{A} \cdot u$ for some idempotent $u \in \alg{A}$.
\end{enumerate}
\end{theorem*}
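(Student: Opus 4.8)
The plan is to reduce everything to the behaviour of the quantale $\A$ regarded as a module over itself via multiplication (call it the \emph{regular module}), exactly as in classical ring/module theory. Two observations get used repeatedly. First, the regular module is cyclic with generator $\1$, since $a = a\cdot\1 = a \ast \1$. Second, for every idempotent $u \in \A$ the set $\A \cdot u$ is a submodule of the regular module: it is closed under arbitrary joins because $\bigvee_{i}(a_{i}\cdot u) = (\bigvee_{i} a_{i})\cdot u$ (in particular $\0\cdot u = \0$, the empty case), and closed under the action because $b\ast(a\cdot u) = (b\cdot a)\cdot u$, using the quantale distributive laws and associativity of $\cdot$. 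Moreover $\A\cdot u$ is itself cyclic with generator $u$, as $u = u\cdot u \in \A\cdot u$ and every element $a\cdot u$ of $\A\cdot u$ equals $a\ast u$. I would also isolate one lemma: for an idempotent $u\in\A$ and any $\A$-module $\Q$, the map $h \mapsto h(u)$ is a bijection between module homomorphisms $h\colon \A\cdot u \to \Q$ and elements $q\in\Q$ with $u\ast q = q$, the inverse being $q \mapsto (a\cdot u \mapsto a\ast q)$. Here the forward direction uses $u\ast h(u) = h(u\ast u) = h(u)$; well-definedness of the inverse uses that $a\cdot u = b\cdot u$ forces $a\ast q = (a\cdot u)\ast q = (b\cdot u)\ast q = b\ast q$ once $u\ast q = q$; the homomorphism properties are routine from the distributive and associativity laws. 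Specialising to $u\assign\1$, every module endomorphism $p$ of the regular module has the form $p(a) = a\cdot p(\1)$.

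For (ii) $\Rightarrow$ (i), assume $\P\iso\A\cdot u$ with $u$ idempotent. Then $\P$ is cyclic by the opening remark. For projectivity, let $h\colon \A\cdot u \to \Q$ be a module homomorphism and $g\colon \R\onto\Q$ a surjective one. Put $q\assign h(u)$, so $u\ast q = q$ by the lemma, and pick $r_{0}\in\R$ with $g(r_{0}) = q$. Setting $r\assign u\ast r_{0}$ gives $g(r) = u\ast g(r_{0}) = u\ast q = q$ and $u\ast r = (u\cdot u)\ast r_{0} = u\ast r_{0} = r$. By the lemma there is a module homomorphism $h^{\sharp}\colon \A\cdot u \to \R$ with $h^{\sharp}(u) = r$, and then $g\circ h^{\sharp}$ is a module homomorphism agreeing with $h$ at the generator $u$, hence $g\circ h^{\sharp} = h$ by the uniqueness part of the lemma.

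For (i) $\Rightarrow$ (ii), let $\P$ be cyclic projective with generator $u'$. Then $g\colon \A \to \P$, $g(a)\assign a\ast u'$, is a surjective module homomorphism (surjective because $u'$ generates, a homomorphism by the distributive and associativity laws). Using projectivity of $\P$, lift $\id_{\P}$ along $g$ to a module homomorphism $h^{\sharp}\colon \P\to\A$ with $g\circ h^{\sharp} = \id_{\P}$. Then $p\assign h^{\sharp}\circ g$ is an endomorphism of the regular module with $p\circ p = h^{\sharp}\circ(g\circ h^{\sharp})\circ g = h^{\sharp}\circ g = p$. By the lemma $p(a) = a\cdot e$ where $e\assign p(\1)$, and evaluating $p\circ p = p$ at $\1$ yields $e\cdot e = e$. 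Finally $\A\cdot e = p[\A] = h^{\sharp}[g[\A]] = h^{\sharp}[\P]$, and $h^{\sharp}$ corestricts to an isomorphism $\P \to \A\cdot e$ with inverse $g|_{\A\cdot e}$ (the two composites are identities by $g\circ h^{\sharp} = \id_{\P}$ and by $h^{\sharp}\circ g|_{\A\cdot e} = p|_{\A\cdot e} = \id_{\A\cdot e}$, the latter since $p(a\cdot e) = a\cdot e\cdot e = a\cdot e$). Hence $\P\iso\A\cdot e$ with $e$ idempotent.

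The one genuinely load-bearing idea is the classical one: a cyclic projective module is a retract of the regular module, and a retraction of the regular module is ``right multiplication by an idempotent''. The step that requires care is identifying $\End(\A)$ with $\A$ acting by right multiplication and transporting idempotency from the endomorphism $p$ to an actual element $e\in\A$; this is where unitality and associativity of the module action are used. Everything else — surjectivity of $g$, the submodule status of $\A\cdot u$, the homomorphism lemma — is a routine unwinding of the quantale distributive laws, so I expect no serious obstacle beyond bookkeeping.
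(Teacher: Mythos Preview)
Your proof is correct. The core idea in both directions coincides with the paper's argument (Theorem~\ref{thm: cyclic projective modules}): a cyclic projective module is a retract of the regular module, and such a retraction is right multiplication by an idempotent.

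The packaging differs in one respect worth noting. For (i)~$\Rightarrow$~(ii), the paper first invokes its description of cyclic modules as quotients $\A_{\gamma}$ by a structural nucleus (Theorem~\ref{thm: cyclic modules}), then sections the quotient map $\gamma\colon \A \to \A_{\gamma}$ and reads off the idempotent as $u = h(\gamma(\1))$. You bypass nuclei entirely: you work directly with the surjection $g(a) = a \ast u'$, section it, and extract the idempotent via the identification $\End(\A) \cong \A$ given by $p \mapsto p(\1)$. Your route is the more elementary classical ring-theoretic argument and is self-contained; the paper's route is slightly less direct here but ties the result into the nucleus machinery developed earlier, which is what the paper actually needs downstream (the conditions (iii)--(v) in Theorem~\ref{thm: cyclic projective modules} and the applications in Section~\ref{sec: algebraizability}). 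For (ii)~$\Rightarrow$~(i) the two proofs are essentially identical once one notes that your adjusted lift $r = u \ast r_{0}$ and the paper's unadjusted $v$ give the same map, since $(a \cdot u) \ast v = a \ast (u \ast v)$.
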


  This description of cyclic projective modules over a quantale was extended to arbitrary projective modules by Russo~\cite[Theorem~2.4]{Russo16}.

\subsection{Going substructural, take 1: from sets to multisets}

  Having reviewed the existing concrete and abstract approaches to consequence relations, we now introduce the proper topic of this paper: substructural consequence relations.\footnote{It is an unfortunate fact that the term \emph{structural} has two well-established but completely unrelated senses, both of which are important in this paper. In one sense, structurality is the property of being invariant under substitutions. In another sense, structurality is the admissibility of the so-called structural rules of classical or intuitionistic logic in sequent calculi (namely, the rules of Exchange, Contraction, and Weakening). Being \emph{substructural}, in contrast, means that some of these structural rules are not admissible.}

  The idea behind substructural logics is by now a familiar one (see e.g.~\cite{Paoli02}). The substructural charge against classical and intuitionistic logic is that they fail to be resource-conscious: deriving $y$ from, say, $x$ and $x \rightarrow (x \rightarrow y)$ is more demanding in terms of logical resources than deriving $y$ from $x$ and $x \rightarrow y$, since in the former case one has to use the premise $x$ twice. This distinction is not reflected anywhere in classical and intuitionistic logic. In contrast, substructural logics distinguish between two ways of combining propositions: a multiplicative or serial one ($\varphi \cdot \psi$), which allows us to use one proposition and then the other, and an additive or parallel one ($\varphi \wedge \psi$), which allows us to use once either of the two propositions. One then derives $y$ from $x \cdot x$ and $x \rightarrow (x \rightarrow y)$, but in general neither from $x$ and $x \rightarrow (x \rightarrow y)$ nor from $x \wedge x$ and $x \rightarrow (x \rightarrow y)$. This yields a more fine-grained logical analysis of propositions.

  This kind of resource-sensitivity is reflected at the level of sequent calculi by the failure of some of the structural rules of Exchange, Contraction, and Weakening. These rules ensure that in a sequent $\gamma_{1}, \dots, \gamma_{n} \seq \varphi$ the comma can be interpreted simply as the set-theoretic comma combining the premises into a set. In the absence of Contraction or Weakening, in contrast, the premises need to be treated as a multiset, and in the absence of Exchange as a list. One therefore again gets a more fine-grained logical analysis of the ways in which premises can be combined.

  However, despite all this talk about their resource-sensitivity, substructural logics have largely (and, it has to be said, with much success) been studied within the framework of set-based and therefore resource-insensitive consequence relations. While these logics have substructural sequent calculi, the consequence relation normally associated with such a logic is the so-called \emph{external} consequence relation which does not directly reflect the absence of structural rules: for a set $\Gamma \subseteq Fm$
\begin{align*}
  \Gamma \vdash_{\mathrm{e}} \varphi \iff \set{\emptyset \seq \gamma}{\gamma \in \Gamma} \text{ proves } \emptyset \seq \varphi \text{ in the associated sequent calculus}.
\end{align*}
  Substructurality manifests itself at the level of theorems (for instance, the implication $(x \cdot x) \rightarrow x$ may fail to be a theorem) but not at the level of consequence (the rule $x, x \vdash x$ holds). In contrast, the internal consequence relation associated with a sequent calculus is genuinely substructural: for a list of formulas~$\Gamma$
\begin{align*}
  \Gamma \vdash_{\mathrm{i}} \varphi \iff \text{the associated sequent calculus proves } \Gamma \seq \varphi.
\end{align*}
  A natural question is therefore whether one can develop a framework for studying consequence relations such as $\vdash_{\mathrm{i}}$ which reflect the substructural nature of such logics directly at the inferential level, rather than merely at the level of theorems. (For a more thorough and detail motivation and discussion of multiset-based consequence relations, the reader should consult the papers~\cite{CGMP,CintulaPaoli21}.)

  What would such a framework look like? The structures over which such substructural consequence relations operate need to be more resource-sensitive types of mathematical structures than sets, for instance multisets or lists. The framework which we develop in the present paper allows us to handle both multisets and lists, either with or without the rule of Weakening, but for the sake of simplicity let us restrict the current discussion to the framework of multisets with Weakening.

  Let us first review some basic definitions related to multisets and introduce some notation. A multiset $\Gamma$ over a set $X$ is a function $\Gamma\colon X \to \Nat \cup \{ \infty \}$. A multiset over~$X$ is finite if $\Gamma(x) \in \Nat$ for each $x \in X$ and moreover $\Gamma$ only takes non-zero values on a finite set. Finite multisets can conveniently be described using the notation $[x_{1}; \dots; x_{n}]$. For example, $\Gamma \assign [1;1;2;1]$ is the unique multiset over $\Nat$ such that $\Gamma(1) = 3$, $\Gamma(2) = 1$, and $\Gamma(n) = 0$ otherwise. Multisets over $X$ are partially ordered by the componentwise order, also called the submultiset relation:
\begin{align*}
  \Gamma \leq \Delta \iff \Gamma(x) \leq \Delta(x) \text{ for each } x \in X.
\end{align*}
  Moreover, multisets over $X$ can be added componentwise:
\begin{align*}
  (\Gamma + \Delta)(x) \assign \Gamma(x) + \Delta(x).
\end{align*}
  This yields the partially ordered (additive) monoid $\Multi{X}$ of finite multisets over~$X$. If $\alg{M}$ is a (multiplicative) monoid, then $\Multi{\alg{M}}$ additionally inherits a multiplicative structure, which makes it a partially ordered semiring:
\begin{align*}
  [a_{1}; \dots; a_{m}] \cdot [b_{1}; \dots; b_{n}] \assign [a_{1} \cdot b_{1}; \dots; a_{1} \cdot b_{n}; \dots; a_{m} \cdot b_{1}; \dots; a_{n} \cdot b_{n}].
\end{align*}

  A natural approach to multiset-based consequence relations is now to replace sets by multisets in the definition of a consequence relation. This is indeed the approach taken by Cintula et al.~\cite{CGMP}, who develop this framework in detail and give it an abstract module-theoretic formulation in the spirit of Galatos and Tsinakis~\cite{GT}.

  Instead of working with relations $\Gamma \vdash \Delta$ where $\Gamma$ and $\Delta$ are sets, one takes $\Gamma$ and $\Delta$ to be multisets and replaces Reflexivity, Transitivity, Collection, and Structurality by their obvious multiset analogues. In particular, the subset relation $\Gamma \subseteq \Delta$ is replaced by the submultiset relation $\Gamma \leq \Delta$. Each set-based consequence relation $\vdash$ associated with a substructural logic with Weakening then yields a multiset-based consequence relation $\vdash_{\mathrm{m}}$ associated with the same logic:
\begin{align*}
  [\gamma_{1}; \dots; \gamma_{m}] \vdash_{\mathrm{m}} [\delta_{1}; \dots; \delta_{n}] \iff \gamma_{1} \cdot \ldots \cdot \gamma_{m} \vdash \delta_{1} \cdot \ldots  \cdot \delta_{n}.
\end{align*}
  This forces us to restrict to finite multisets in the definition of $\vdash_{\mathrm{m}}$ in order to for the right-hand side to be well-defined. In other words, we end up with a binary relation $\vdash_{\mathrm{m}}$ on the poset of finite multisets of formulas. The action of the monoid of substitutions $\alg{M}$ then extends to an action of~$\Multi{\alg{M}}$:
\begin{align*}
  [\sigma_{1}; \dots; \sigma_{n}] \ast [\gamma_{1}; \dots; \gamma_{m}] \assign [\sigma_{1} \ast \gamma_{1}; \dots; \sigma_{1} \ast \gamma_{m}; \dots; \sigma_{n} \ast \gamma_{1}; \dots; \sigma_{n} \ast \gamma_{m}].
\end{align*}
  At the abstract level, we therefore end up with a partially ordered semiring (to be thought of as an abstract counterpart of~$\Multi{\alg{M}}$) acting on a partially ordered monoid (to be thought of as an abstract counterpart of~$\Multi{Fm}$). Cintula et al.\ then recover analogues of the key results of Galatos and Tsinakis in this setting.

\subsection{Going substructural, take 2: from sets to sets of multisets}

  The central claim of present paper is that while it is tempting to generalize directly from consequence relations between sets to consequence relations between multisets, this strategy comes with a number of awkward features which can be avoided if one instead adopts an approach based on \emph{sets of multisets}. That is, our proposal is that instead of working with the set of finite multisets of formulas $\Multi{Fm}$, we should work with the set $\DM{Fm}$ of non-empty downsets of~$\Multi{Fm}$ with respect to the multiset order. The main technical obstacle of this proposal is figuring out how to extend the action of $\alg{M}$ on $Fm$ to an action of $\DM{\alg{M}}$ on $\DM{Fm}$, and indeed identifying the appropriate algebraic structure on $\DM{\alg{M}}$ and $\DM{Fm}$. Once this obstacle is dealt with, we obtain a smooth generalization of the framework of Galatos and Tsinakis~\cite{GT}, which is covered as a special (idempotent) case.

  While this approach looks more complicated than that of Cintula et al.~\cite{CGMP}, this appearance is deceiving. Cintula et al.\ do recover the correspondence of Galatos and Tsinakis between structural consequence relations and structural closure operators, but their structural closure operators on $\Multi{Fm}$ are not closure operators on $\Multi{Fm}$ in the usual sense. Rather, they are maps ${\Multi{Fm} \to \DM{Fm}}$. In other words, even in the framework of~\cite{CGMP} one cannot avoid dealing with $\DM{Fm}$ anyway.

  The above complication is not at all peculiar to the multiset setting, but rather it arises already in the set-based setting. In the set-based case, it is also entirely possible to \emph{a priori} restrict to the finitary case and accordingly to work with binary consequence relations on the set $\Power_{\omega}(Fm)$ of finite subsets of $Fm$. In that case, closure operators corresponding to such consequence relations again do not live on $\Power_{\omega}(Fm)$, but rather are maps $\Power_{\omega}(Fm) \to \Power(\Power_{\omega}(Fm))$, which can be simplified to maps $\Power_{\omega}(Fm) \to \Power(Fm)$. This is indeed a possible way to develop the theory of finitary consequence relations, and the work of Cintula et al. is in effect an extension of such an approach to the multiset-based setting. In our opinion, however, a smoother and more informative theory results from pursuing the path taken by Galatos and Tsinakis: to first develop the general framework and then to deal with finitary consequence relations as a special case.

  To add insult to injury, observe that the approach of Cintula et al. is not only intrinsically finitary, in fact their multiset-based counterpart $\vdash_{\mathrm{m}}$ of a set-based consequence relation $\vdash$ depends only on the formula--formula fragment of $\vdash$, i.e.\ it forgets everything about $\vdash$ except for the valid rules of the form $\varphi \vdash \psi$. In the absence of a conjunction connective such that $\{ \varphi, \psi \}$ is equivalent to $\{ \varphi \wedge \psi \}$ in $\vdash$, this can be a very destructive operation. In contrast, if $\Gamma$ and $\Delta$ are sets of multisets, one can encode the entire set-based consequence relation $\vdash$ (possibly even infinitary) into its multiset-based counterpart $\vdash_{\mathrm{m}}$ as follows:
\begin{align*}
  \Gamma \vdash_{\mathrm{m}} \Delta \iff \set{\gamma_{1} \cdot \ldots \cdot \gamma_{m}}{[\gamma_{1}; \dots; \gamma_{m}] \in \Gamma} \vdash \set{\delta_{1} \cdot \ldots \cdot \delta_{n}}{[\delta_{1}; \dots; \delta_{n}] \in \Delta}.
\end{align*}

  While our motivation is primarily to obtain a smooth algebraic theory of consequence relations, the more philosophically inclined logician also has reason to prefer our solution. As we already recalled, one of the selling points of substructural logics is that they allow us to precisely differentiate between two different ways of combining logical resources: a multiplicative or serial combination ($\varphi \cdot \psi$) and an additive or parallel combination ($\varphi \wedge \psi$). If the task at hand is to transfer this distinction to the setting of consequence relations, the framework based only on finite multisets of formulas does not fully deliver: it does not allow us to express additive combinations of premises. In contrast, the framework based on sets of finite multisets of formulas allows us to faithfully preserve the distinction between multiplicative and additive combination of resources at the inferential level.

  On the other hand, our framework does have one downside compared to that of Cintula et al., namely that our algebras of scalars are not definable by equations or by inequalities. Cintula et al.\ replace the quantales of scalars of Galatos and Tsinakis by partially ordered semirings satisfying some inequational conditions. In contrast, we replace them by what we call distributively generated generalized additive quantales with multiplication. While generalized additive quantales with multiplication are defined equationally, being distributively generated is not.

\subsection{Outline of the paper}

  In Section~\ref{sec: modules}, we introduce the key algebraic structures which we shall study in this paper, namely generalized additive quantales with multiplication and their modules. In the rest of the paper, we work relative to a prevariety of generalized quantales $\class{K}$. In each such prevariety we have the $\class{K}$-free generalized quantale $\FreeIn{\class{K}}{X}$ over a poset $X$. In Section~\ref{sec: free cdi} we show that if $\class{K}$ is the class of commutative dually integral generalized quantales, then $\FreeIn{\class{K}}{X}$ is the generalized quantale $\DM{X}$ of non-empty downsets of finitely generated multiupsets of $X$. In Section~\ref{sec: free quantales with multiplication} we show that if $\alg{M}$ is a partially ordered monoid, then $\FreeIn{\class{K}}{M}$ can be expanded to a generalized additive quantale with multiplication, and moreover each action of $\alg{M}$ on a generalized quantale $\alg{L}$ extends uniquely to a module action of $\FreeIn{\class{K}}{M}$ on $\alg{L}$. In Section~\ref{sec: nuclei} we establish the correspondence between structural consequence relations, structural nuclei, and module congruences. In Section~\ref{sec: projective} we describe the cyclic projective modules in our setting. Finally, in Section~\ref{sec: algebraizability} we prove an abstract form of Blok and Pigozzi's characterization of algebraizable logics in our substructural setting.

\section{Quantales and their modules}
\label{sec: modules}

  An \emph{(almost) complete join semilattice} is a poset where joins of all (non-empty) subsets exist. A homomorphism of such structures is a map which preserves all (non-empty) joins, and therefore also the partial order. Observe that complete join semilattices are precisely the almost complete join semilattices with a bottom element~$\bot$, and their homomorphisms are precisely the homomorphisms of almost complete join semilattices which preserve~$\bot$.

\begin{definition}
  A \emph{partially ordered monoid}, or a \emph{pomonoid} for short, is an ordered algebra, written either in multiplicative notation $\alg{M} \assign \langle M, \leq, \cdot, \1 \rangle$ or in additive notation $\alg{M} \assign \langle M, \leq, +, \0 \rangle$, consisting of a monoid equipped with a partial order such that monoidal operation is order-preserving in each coordinate. An additive pomonoid is \emph{dually integral} if $\0$ is its least element. It is \emph{idempotent} if $x + x = x$ for each $x$. It is \emph{commutative} if $x + y = y + x$ for each $x$ and $y$.
\end{definition}

  Additive pomonoids which are both dually integral and idempotent are always commutative. In fact, they are exactly the \emph{join semilattices with a zero} (bottom).

  The reader should keep in mind that additive and multiplicative structures will serve very different purposes in the following. In particular, quantales will be treated in additive notation. Addition is meant to abstract the sum of multisets, while multiplication is meant to abstract the composition of substitutions.

\begin{definition}
  A \emph{(generalized) quantale} is an algebra $\alg{Q} \assign \langle Q, \bigvee, +, \0 \rangle$ such that $\langle Q, \bigvee \rangle$ is an (almost) complete join semilattice, $\langle Q, \leq, +, \0 \rangle$ is a pomonoid with respect to the join semilattice order, and for all $x, y \in \alg{Q}$ and all (non-empty) families $x_{i}, y_{i} \in \alg{Q}$ with $i \in I$
\begin{align*}
  & x + \bigvee_{i \in I} y_{i} = \bigvee_{i \in I} (x + y_{i}), & & \bigvee_{i \in I} x_{i} + y = \bigvee_{i \in I} (x_{i} + y).
\end{align*}
  A (generalized) quantale is called commutative, dually integral, or idempotent if its additive pomonoid reduct is.
\end{definition}

  The reason for introducing generalized quantales is that some natural inequational axioms are incompatible with quantale structure.

\begin{example}
  A non-trivial dually integral generalized quantale cannot be a quantale (despite the fact that it is a complete join semilattice), since being a dually integral quantale requires that $x = x + \0 = x + \bot = \bot$ for each $x$. In particular, complete join semilattices are not quantales if we take $x + y \assign x \vee y$.
\end{example}

\begin{example}
  Complete join semilattices are precisely the generalized quantales where $x + y = x \vee y$. Equivalently, they are the idempotent (hence commutative) dually integral generalized quantales. Accordingly, the results of this paper form a non-idempotent generalization of the results of Galatos and Tsinakis~\cite{GT}.
\end{example}

  We now work towards a definition of modules where (generalized) additive quantales equipped with a multiplication act on (generalized) additive quantales. We start from the module action of a monoid on a (generalized) quantale.

\begin{definition}
  The \emph{monotone transformation pomonoid} $\Mon X$ of a poset~$X$ is the set of all order-preserving maps ${f\colon X \to X}$ with the componentwise order:
\begin{align*}
  f \leq^{\Mon X} g \text{ if and only if } f(a) \leq^{X} g(a) \text{ for all }a \in X,
\end{align*}
  and with the operations
\begin{align*}
  & f \cdot^{\Mon X} g \assign f \circ g, & & \1^{\Mon X} \assign \id_{X}.
\end{align*}
  The \emph{endomorphism pomonoid} $\End \alg{Q} \leq \alg{Q}^{Q}$ of a (generalized) quantale $\alg{Q}$ is the subpomonoid of $\alg{Q}^{Q}$ consisting of the endomorphisms of $\alg{Q}$, i.e.\ the homomorphisms of (generalized) quantales $\alg{Q} \to \alg{Q}$, with the monoidal structure inherited from $\alg{Q}^{Q}$, i.e.\ from the monoid of all functions $Q \to Q$ with the componentwise operations.
\end{definition}

  Observe that $\End \alg{Q} \leq \Mon Q$, where $Q$ is the poset reduct of $\alg{Q}$,

\begin{definition}
  An \emph{action} of a pomonoid $\alg{M}$ on a poset $X$ is a homomorphism of posets $\alg{M} \to \Mon X$. Equivalently, it is a map $\ast\colon \alg{M} \times X \to X$ which is order-preserving in both coordinates and moreover for all $a, b \in \alg{M}$ and $x \in X$
\begin{align*}
  & (a \cdot b) \ast x = a \ast (b \ast x), & & \1 \ast x = x.
\end{align*}
  An \emph{$\alg{M}$-poset} is a poset $X$ equipped with an action of $\alg{M}$.
\end{definition}

\begin{definition}
  An \emph{action} of a pomonoid $\alg{M}$ on a (generalized) quantale $\alg{Q}$ is a homomorphism of pomonoids $\alg{M} \to \End \alg{Q}$. Equivalently, it is a map $\ast\colon M \times Q \to Q$ which is an action of $\alg{M}$ on the poset reduct of $\alg{Q}$ such that for all $a \in \alg{M}$, $x, y \in \alg{Q}$, and each non-empty family $x_{i} \in \alg{Q}$ with $i \in I$
\begin{align*}
  & a \ast \bigvee_{i \in I} x_{i} = \bigvee_{i \in I} (a \ast x_{i}), & & a \ast (x + y) = a \ast x + a \ast y, & & a \ast \0 = \0.
\end{align*}
  An \emph{$\alg{M}$-act} is a (generalized) quantale $\alg{Q}$ equipped with an action of $\alg{M}$.
\end{definition}

  It will be convenient to introduce the notion of a \emph{(generalized) quantale term}, which is a (non-empty) set of additive monoidal terms, to be interpreted as a formal join. A (generalized) quantale term may involve an infinite set of variables, which requires us to admit possibly infinite tuples $\tuple{x}$, i.e.\ tuples indexed by ordinal numbers, as their arguments. The benefit of introducing (generalized) quantale terms is that we can now state the definition of an action of a pomonoid $\alg{M}$ on a (generalized) quantale $\alg{Q}$ more concisely: it is an order-preserving map ${\ast\colon M \times Q \to Q}$ such that for each (generalized) quantale term~$t$ and $a, b \in \alg{M}$, $x, \tuple{y} \in \alg{Q}$
\begin{align*}
  & (a \cdot b) \ast x = a \ast (b \ast x), & & \1 \ast x = x, & & a \ast t^{\alg{Q}}(\tuple{y}) = t^{\alg{Q}}(a \ast \tuple{y}).
\end{align*}
  We generally omit all reference to the length of our tuples and simply assume that whenever we use the notation $t^{\alg{Q}}(\tuple{a})$ that the tuple $\tuple{a}$ has an appropriate arity. Notation such as $\tuple{a} \ast x$ and $a \ast \tuple{x}$ is to be interpreted componentwise.

  In well-behaved cases, the pomonoid $\End \alg{Q}$ naturally inherits the structure of a (generalized) quantale from $\alg{Q}$.

\begin{example}
  Let $\alg{S}$ be a complete join semilattice, viewed as a generalized quantale $\alg{S} \assign \langle S, \bigvee, \vee, \bot \rangle$. Then $\End \alg{S}$ is a multiplicative generalized quantale
\begin{align*}
  \End \alg{S} \assign \langle \End \alg{S}, \bigvee, \vee, \bot, \circ, \id_{S} \rangle.
\end{align*}
\end{example}

  However, in other cases a (non-empty) join of endomorphisms need not be an endomorphism. This happens already in the case of commutative dually integral generalized quantales. The problem is that sums of join-preserving maps need not be join-preserving, and likewise joins of sum-preserving maps need not be sum-preserving. To overcome obstacle, we embed the pomonoid $\End \alg{Q}$ into the larger structure $\Mon Q \leq \alg{Q}^{Q}$ of order-preserving maps on $Q$.

\begin{definition}
  Let $\alg{Q}$ be a (generalized) quantale. Then $\Mon \alg{Q}$ is the expansion of the monotone transformation monoid $\langle \Mon Q, \cdot, 1 \rangle$ by the (generalized) additive quantale structure inherited by $\Mon Q$ as a (generalized) additive subquantale of~$\alg{Q}^{Q}$. 
\end{definition}

  $\Mon \alg{Q}$ is \emph{almost} a (generalized) quantale, except that in general $x \cdot \bigvee_{i \in I} y_{i}$ need not coincide with $\bigvee_{i \in I} (x \cdot y_{i})$.

\begin{definition}
  Let $\alg{Q}$ be a (generalized) quantale. Then $\Gen \alg{Q}$ denotes the subalgebra of $\Mon \alg{Q}$ generated by $\End \alg{Q}$. That is, $\Gen \alg{Q}$ is the set of all non-empty joins of sums of endomorphisms of $\alg{Q}$ and
\begin{align*}
  \Gen \alg{Q} \assign \langle \Sg^{\Mon \alg{Q}} \End \alg{Q}, \bigvee, +, \0, \circ, \id_{Q} \rangle,
\end{align*}
  where $\bigvee$, $+$, $\0$ are pointwise (non-empty) joins, sums, and zero, $\circ$ is functional composition, and $\id_{Q}$ is the identity function on~$Q$.
\end{definition}

  Of course, we do not wish to forget that inside of $\Gen \alg{Q}$ we have a subset of well-behaved elements $\End \alg{Q}$. As our expanded object of endomorphisms, we therefore take the two-sorted algebra
\begin{align*}
  \ExpEnd \alg{Q} \assign \langle \End \alg{Q}, \Gen \alg{Q}, \embedof{\alg{Q}} \rangle,
\end{align*}
  where $\embedof{\alg{Q}}\colon \End \alg{Q} \to \Gen \alg{Q}$ is the inclusion map. This results in a two-sorted algebra of the following kind.

\begin{definition}
  A \emph{(generalized) additive quantale with multiplication} is an algebra with two sorts $\A$ and $\Ad$ linked by a map $\iota\colon \Ad \to \A$ where
\begin{itemize}
\item $\Ad$ is a monoid,
\item $\A$ is a (generalized) quantale,
\item $\A$ additionally has the structure of a monoid $\langle A, \cdot, \1 \rangle$, and
\item $\embed\colon \Ad \to \A$ is a homomorphism of monoids,
\end{itemize}
  such that for all $\tuple{a}, b \in \A$ and each (non-empty) family $a_{i} \in \A$ with $i \in I$
\begin{align*}
  & \bigvee_{i \in I} a_{i} \cdot b = \bigvee_{i \in I} (a_{i} \cdot b), & & (a + b) \cdot c = (a \cdot c) + (b \cdot c), & & 0 \cdot a = 0,
\end{align*}
  and for all $d \in \Ad$, $\tuple{a} \in \A$, and each (non-empty) family $a_{i} \in \A$ with $i \in I$
\begin{align*}
  & \embed(d) \cdot \bigvee_{i \in I} a_{i} = \bigvee_{i \in I} (\embed(d) \cdot a_{i}), & & \embed(d) \cdot (a + b) = (\embed(d) \cdot a) + (\embed(d) \cdot b), & & \embed(d) \cdot 0 = 0.
\end{align*}
\end{definition}

  Equivalently, we can state these equations as: for each (generalized) quantale term~$t$ and all $\tuple{a} \in \A$, $b \in \A$, and $d \in \Ad$
\begin{align*}
  & t^{\A}(\tuple{a}) \cdot b = t^{\A}(\tuple{a} \cdot b), & & \embed(d) \cdot t^{\A}(\tuple{a}) = t^{\A}(\embed(d) \cdot \tuple{a}).
\end{align*}

  Abusing notation slightly, we shall generally use the name $\A$ to refer to the entire triple $\langle \Ad, \A, \embed \rangle$. We call $\Ad$ the \emph{monoid of distributive elements} of~$\A$. A (generalized) quantale with multiplication is said to be \emph{distributively generated} if $\A$ is generated as a (generalized) quantale by $\embed [\Ad]$. In particular, $\ExpEnd \alg{Q}$ is distributively generated by definition. 

  Instead of requiring that $\A$ be generated as a (generalized) quantale by $\embed [\Ad]$, we can equivalently require that the algebra $\langle A, \bigvee, +, \0, \cdot, \1 \rangle$ be generated by $\embed [\Ad]$.

\begin{lemma}
  A (generalized) additive quantale with multiplication $\langle \Ad, \A, \embed \rangle$ is distributively generated if and only if $\langle A, \bigvee, +, \0, \cdot, \1 \rangle$ is generated by $\embed[\Ad]$.
\end{lemma}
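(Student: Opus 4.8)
The statement has a trivial direction: if $\embed[\Ad]$ generates $\A$ as a generalized quantale, then it certainly generates the larger algebra $\langle A, \bigvee, +, \0, \cdot, \1 \rangle$, since the subalgebra generated by a fixed set can only shrink when operations are added to the signature. So the plan is to prove the converse: assuming $\embed[\Ad]$ generates $\langle A, \bigvee, +, \0, \cdot, \1 \rangle$, I want to show it already generates $\A$ as a generalized quantale.

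First I would let $S$ be the subalgebra of the generalized quantale $\A$ generated by $\embed[\Ad]$, i.e.\ the least subset of $A$ containing $\embed[\Ad]$ and closed under (non-empty) joins, $+$, and $\0$. The role of generalized quantale terms, as introduced before the lemma, is precisely that $S$ consists of the elements $t^{\A}(\tuple{c})$, where $t$ ranges over generalized quantale terms and $\tuple{c}$ over tuples of elements of $\embed[\Ad]$ of the appropriate arity. It then suffices to show that $S$ contains $\1$ and is closed under $\cdot$: for then $S$ is a subalgebra of $\langle A, \bigvee, +, \0, \cdot, \1 \rangle$ containing $\embed[\Ad]$, hence $S = A$ by hypothesis, which says exactly that $\A$ is distributively generated. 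That $\1 \in S$ is immediate, since $\embed\colon \Ad \to \A$ is a homomorphism of monoids and so $\1 = \embed(\1) \in \embed[\Ad] \subseteq S$.

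For closure under $\cdot$, I would take $s_{1}, s_{2} \in S$, write $s_{1} = t_{1}^{\A}(\tuple{a})$ and $s_{2} = t_{2}^{\A}(\tuple{b})$ with $\tuple{a}, \tuple{b}$ tuples over $\embed[\Ad]$, and pick $d_{i}, e_{j} \in \Ad$ with $a_{i} = \embed(d_{i})$ and $b_{j} = \embed(e_{j})$. Using the two term identities recorded just above the lemma, namely $t^{\A}(\tuple{a}) \cdot b = t^{\A}(\tuple{a} \cdot b)$ and $\embed(d) \cdot t^{\A}(\tuple{a}) = t^{\A}(\embed(d) \cdot \tuple{a})$, together with the fact that $\embed$ preserves multiplication, so that $\embed(d_{i}) \cdot \embed(e_{j}) = \embed(d_{i} \cdot e_{j}) \in \embed[\Ad]$, one computes
\begin{align*}
  s_{1} \cdot s_{2} &= t_{1}^{\A}(\tuple{a}) \cdot s_{2} = t_{1}^{\A}(\tuple{a} \cdot s_{2}), \\
  a_{i} \cdot s_{2} &= \embed(d_{i}) \cdot t_{2}^{\A}(\tuple{b}) = t_{2}^{\A}(\embed(d_{i}) \cdot \tuple{b}),
\end{align*}
where $\tuple{a} \cdot s_{2}$ and $\embed(d_{i}) \cdot \tuple{b}$ are read componentwise. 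Since $\embed(d_{i}) \cdot b_{j} = \embed(d_{i} \cdot e_{j}) \in \embed[\Ad]$, each $a_{i} \cdot s_{2} = t_{2}^{\A}\bigl( (\embed(d_{i} \cdot e_{j}))_{j} \bigr)$ lies in $S$; and then $s_{1} \cdot s_{2} = t_{1}^{\A}\bigl( (a_{i} \cdot s_{2})_{i} \bigr)$ is $t_{1}^{\A}$ applied to a tuple of elements of $S$, hence lies in $S$ because $S$ is closed under all generalized quantale operations and $t_{1}$ is built from these.

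The main point demanding care is purely bookkeeping: the tuples $\tuple{a}, \tuple{b}$ may be infinite (indexed by ordinals), so the products $\tuple{a} \cdot s_{2}$ and $\embed(d_{i}) \cdot \tuple{b}$ must be read componentwise as licensed by the conventions fixed earlier, applying the first term identity once with $b = s_{2}$ and the second separately for each index $i$ with $d = d_{i}$. Conceptually, the computation goes through because left distributivity of multiplication over joins and sums is available precisely for elements in the image of $\Ad$ — which is exactly why the sort $\Ad$ is built into the signature — and no deeper obstacle arises; the distributivity and monoid-homomorphism axioms do all the work.
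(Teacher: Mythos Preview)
Your proof is correct and takes essentially the same approach as the paper: show that the set $S$ of generalized-quantale-term values over $\embed[\Ad]$ contains $\1$ and is closed under $\cdot$, using the two identities $t^{\A}(\tuple{a}) \cdot b = t^{\A}(\tuple{a} \cdot b)$ and $\embed(d) \cdot t^{\A}(\tuple{a}) = t^{\A}(\embed(d) \cdot \tuple{a})$ together with the fact that $\embed$ preserves products. One small slip in your exposition of the trivial direction: the subalgebra generated by a fixed set can only \emph{grow}, not shrink, when operations are added to the signature---your conclusion there is right, but the justification is stated backwards.
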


\begin{proof}
  The left-to-right implication is trivial. Conversely, it suffices to show that
\begin{align*}
  \set{t^{\A}(\tuple{a}) \in \A}{\text{$t$ is a (generalized) quantale term and } \tuple{a} \in \embed[\Ad]}
\end{align*}
  is closed under products and contains $\1$. But for any two such elements $t^{\A}(\tuple{a})$ and $u^{\A}(\tuple{b})$ 
\begin{align*}
  & t^{\A}(\tuple{a}) \cdot u^{\A}(\tuple{b}) = t^{\A}(\tuple{a} \cdot u^{\A}(\tuple{b})) = t^{\A}(u^{\A}(\tuple{a} \cdot \tuple{b})),
\end{align*}
  and clearly $\1^{\A} = \embedof{\A}(\1^{\Ad}) \in \embed[\Ad]$.
\end{proof}

  Homomorphisms of (generalized) additive quantales with multiplication are the ordinary homomorphisms of two-sorted algebras, i.e.\ such a homomorphism consists of a homomorphism of monoids $\hd\colon \Ad \to \Bd$ and a homomorphism of (generalized) quantales ${h\colon \A \to \B}$ such that $\embedof{\B} \circ \hd = h \circ \embedof{\A}$.

  We are now finally ready to define a module over a (generalized) additive quantale with multiplication~$\A$.

\begin{definition}
  An \emph{$\A$-module} is a (generalized) quantale $\alg{Q}$ equipped with a homomorphism $h\colon \A \to \ExpEnd \alg{Q}$ of (generalized) additive quantales with multiplication.
\end{definition}

  Homomorphisms $h\colon \A \to \ExpEnd \alg{Q}$ are in bijective correspondence with order-preserving maps $\ast\colon A \times Q \to Q$ such that for all $a, b \in \A$ and $x \in \alg{Q}$
\begin{align*}
  & (a \cdot b) \ast x = a \ast (b \ast x), & & \1 \ast x = x,
\end{align*}
  and for all $a, b \in \A$, $x \in \alg{Q}$, and each (non-empty) family $a_{i} \in \A$ with $i \in I$
\begin{align*}
  & (a + b) \ast x = a \ast x + b \ast x, & & \0 \ast x = \0, & & \bigvee_{i \in I} a_{i} \ast x = \bigvee_{i \in I} (a_{i} \ast x),
\end{align*}
  and for all $d \in \Ad$, $x, y \in \alg{Q}$, and each (non-empty) family $x_{i} \in \alg{Q}$ with $i \in I$
\begin{align*}
  & \embed(d) \ast (x + y) = \embed(d) \ast x + \embed(d) \ast y, & & \embed(d) \ast \0 = \0, & & \embed(d) \ast \bigvee_{i \in I} x_{i} = \bigvee_{i \in I} (\embed(d) \ast x_{i}).
\end{align*}
  Equivalently, we can state these equations more concisely: for each (generalized) quantale term~$t$ and all 
$\tuple{a} \in \A$, $x, \tuple{x} \in \alg{Q}$, and $d \in \Ad$
\begin{align*}
  & t^{\A}(\tuple{a}) \ast x = t^{\alg{Q}}(\tuple{a} \ast x), & & \iota(d) \ast t^{\alg{Q}}(\tuple{x}) = t^{\alg{Q}}(\iota(d) \ast \tuple{x}).
\end{align*}

\section{Free c.d.i.\ generalized quantales}
\label{sec: free cdi}

  We now have a definition of a module over a (generalized) additive quantale with multiplication. Apart from algebras of the form $\ExpEnd \alg{Q}$, however, we have not seen how to get our hands on a concrete additive quantale with multiplication.

  In the next section, we shall see that such modules can be obtained by adding multiplicative structure to \emph{free} (generalized) quantales over the poset reduct of a pomonoid, more precisely to (generalized) quantales free relative to a prevariety of (generalized) quantales. While the concrete shape of the free (generalized) quantale will be of no importance in the next section, nonetheless we now take the time in this section to describe these (generalized) quantales for a particular prevariety so that the reader can see how the work presented here subsumes as a special case a non-idempotent generalization of the work of Galatos and Tsinakis~\cite{GT}.

  Galatos and Tsinakis studied modules which were complete join semilattices, i.e.\ idempotent commutative dually integral generalized quantales. The closest natural non-idempotent generalization of their work is therefore to consider modules with the structure of commutative dually integral generalized quantales. We therefore devote the present section to describing this particular case in detail.

 Commutative dually integral pomonoids (generalized quantales) will be called \emph{c.d.i.\ pomonoids} (\emph{c.d.i.\ generalized quantales}) for short. (Recall that each c.d.i.\ quantale is trivial, hence the need to talk about generalized quantales rather than quantales.) Idempotent c.d.i.\ pomonoids are termwise equivalent to \emph{join semilattices with zero} if we interpret joins as sums and the bottom as zero.

\begin{definition}
  A \emph{prevariety of (generalized) quantales} is a class of (generalized) quantales closed under isomorphic images, subalgebras, and products. A \emph{prevariety of additive pomonoids} is a class of additive pomonoids closed under isomorphic images, ordered subalgebras, and products.
\end{definition}

\begin{definition}
  Let $\class{K}$ be a prevariety of (generalized) quantales. The \emph{$\class{K}$-free (generalized) quantale over a poset} $X$ is a (generalized) quantale $\FreeIn{\class{K}}{X}$ in $\class{K}$ with an order-preserving \emph{unit map} $\unit_{X}\colon X \to \FreeIn{\class{K}}{X}$ such that for each order-preserving map $h\colon X \to Q$ into the poset reduct $Q$ of an algebra $\alg{Q} \in \class{K}$ there is a unique homomorphism $h^{\sharp}\colon \FreeIn{\class{K}}{\alg{Q}} \to \alg{Q}$ which makes the following diagram commute:
\[
\begin{tikzcd}
 & \FreeIn{\class{K}} X \arrow[dr,"h^{\sharp}",dashed] \\
 & X \arrow[r,"h",swap] \arrow[u,"\unit_{X}"] & \alg{Q}
\end{tikzcd}
\]
  The \emph{$\class{K}$-free (generalized) quantale over an additive pomonoid $\alg{M}$}, denoted $\FreeIn{\class{K}}{\alg{M}}$, is defined in the same way, replacing posets by additive pomonoids.

  If $\class{K}$ is instead a prevariety of additive pomonoids, the \emph{$\class{K}$-free additive pomonoid over a poset} $X$ is also defined in the same way, \emph{mutatis mutandis}.
\end{definition}

  $\FreeIn{\class{K}}{X}$ exists for any poset $X$ and any prevariety $\class{K}$ of (generalized) quantales by the General Adjoint Functor Theorem~\cite[Theorem~4.6.3]{Riehl17}, since up to isomorphism the (generalized) quantales in $\class{K}$ generated by $X$ only form a set, rather than a proper class. $\FreeIn{\class{K}}{X}$ is unique up to a unique isomorphism commuting with $\unit_{X}$. It thus makes sense to talk about \emph{the} $\class{K}$-free (generalized) quantale over $X$.

  The following observation is immediate, since each order-preserving function $f\colon X \to X$ extends to an endomorphism $(\unit_{X} \circ f)^{\sharp}\colon \FreeIn{\class{K}}{X} \to \FreeIn{\class{K}}{X}$ which is the unique endomorphism $h$ of $\FreeIn{\class{K}}{X}$ with $h(\unit_{X}(x)) = \unit_{X}(f(x))$ for $x \in X$.

\begin{lemma} \label{lemma: posets to acts}
  Let $\class{K}$ be a prevariety of (generalized) quantales and $X$ be an $\alg{M}$-poset determined by $h\colon \alg{M} \to \Mon X$. Then $\FreeIn{\class{K}}{X}$ is an $\alg{M}$-act determined by the unique $h^{\sharp}\colon \alg{M} \to \End \FreeIn{\class{K}}{X}$ such that $h^{\sharp}(a)(\unit_{X}(x)) = \unit_{X}(h(a)(x))$ for $a \in \alg{M}$ and $x \in X$.
\end{lemma}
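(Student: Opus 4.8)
The plan is to transport the universal property of $\FreeIn{\class{K}}{X}$ along the action. First I would define the candidate map: given the action $h\colon \alg{M} \to \Mon X$, for each $a \in \alg{M}$ we have an order-preserving map $h(a)\colon X \to X$, hence an order-preserving map $\unit_{X} \circ h(a)\colon X \to \FreeIn{\class{K}}{X}$ into the poset reduct of an algebra in $\class{K}$. By the universal property of $\FreeIn{\class{K}}{X}$ there is a unique homomorphism of (generalized) quantales $h^{\sharp}(a) \assign (\unit_{X} \circ h(a))^{\sharp}\colon \FreeIn{\class{K}}{X} \to \FreeIn{\class{K}}{X}$ satisfying $h^{\sharp}(a)(\unit_{X}(x)) = \unit_{X}(h(a)(x))$, and this is the unique endomorphism with that property (as noted in the remark preceding the lemma). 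This defines a map $h^{\sharp}\colon \alg{M} \to \End \FreeIn{\class{K}}{X}$; note that the uniqueness clause in the statement is then automatic, since any $\alg{M}$-act structure determined by the data in the statement must agree with $h^{\sharp}$ on the generating set $\unit_{X}[X]$ and hence everywhere.

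Next I would verify that $h^{\sharp}$ is a homomorphism of pomonoids $\alg{M} \to \End \FreeIn{\class{K}}{X}$, i.e.\ that it is order-preserving and preserves the monoid operations. Order preservation: if $a \leq b$ in $\alg{M}$, then $h(a) \leq h(b)$ in $\Mon X$, so $\unit_{X}(h(a)(x)) \leq \unit_{X}(h(b)(x))$ for each $x$; thus $h^{\sharp}(a)$ and $h^{\sharp}(b)$ agree-below on the generating set $\unit_{X}[X]$, and since both are join-preserving (being endomorphisms of a quantale) and every element of $\FreeIn{\class{K}}{X}$ is a value of a generalized quantale term applied to elements of $\unit_{X}[X]$, an easy induction on terms gives $h^{\sharp}(a) \leq h^{\sharp}(b)$ pointwise. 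Preservation of $\1$: both $h^{\sharp}(\1)$ and $\id_{\FreeIn{\class{K}}{X}}$ send $\unit_{X}(x)$ to $\unit_{X}(x) = \unit_{X}(h(\1)(x))$, and $\id$ is an endomorphism, so by the uniqueness clause they coincide. Preservation of composition: for $a, b \in \alg{M}$, both $h^{\sharp}(a \cdot b)$ and $h^{\sharp}(a) \circ h^{\sharp}(b)$ are endomorphisms of $\FreeIn{\class{K}}{X}$, and on a generator $\unit_{X}(x)$ the first gives $\unit_{X}(h(a \cdot b)(x)) = \unit_{X}(h(a)(h(b)(x)))$ while the second gives $h^{\sharp}(a)(\unit_{X}(h(b)(x))) = \unit_{X}(h(a)(h(b)(x)))$; so again by uniqueness they agree. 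Since a homomorphism of pomonoids $\alg{M} \to \End \FreeIn{\class{K}}{X}$ is exactly an $\alg{M}$-act structure on the quantale $\FreeIn{\class{K}}{X}$ (by the definition of action on a generalized quantale), we are done.

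The main obstacle — such as it is — is the repeated appeal to the principle that two quantale endomorphisms of $\FreeIn{\class{K}}{X}$ which agree on $\unit_{X}[X]$ are equal, and more generally that an inequality between them on $\unit_{X}[X]$ propagates to all of $\FreeIn{\class{K}}{X}$. The equality version is immediate from the uniqueness half of the universal property. The inequality version requires the remark that every element of $\FreeIn{\class{K}}{X}$ is of the form $t^{\FreeIn{\class{K}}{X}}(\tuple{x})$ for a generalized quantale term $t$ and a tuple $\tuple{x}$ from $\unit_{X}[X]$ — equivalently that $\FreeIn{\class{K}}{X}$ is generated as a generalized quantale by $\unit_{X}[X]$, which follows from the universal property (the subalgebra generated by $\unit_{X}[X]$ already satisfies the universal property) — together with the fact that quantale homomorphisms commute with all quantale terms. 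Everything else is a routine diagram chase using uniqueness, so I would present it compactly rather than spelling out each verification.
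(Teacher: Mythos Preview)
Your proposal is correct and follows exactly the approach the paper indicates: the paper treats the lemma as immediate, merely remarking before it that each order-preserving $f\colon X \to X$ extends to the unique endomorphism $(\unit_{X}\circ f)^{\sharp}$ of $\FreeIn{\class{K}}{X}$ with the stated effect on $\unit_{X}[X]$. You have simply spelled out the verifications (order preservation, unit, composition, uniqueness) that the paper leaves implicit; in particular, your observation that the inequality $h^{\sharp}(a)\leq h^{\sharp}(b)$ propagates from $\unit_{X}[X]$ because $\FreeIn{\class{K}}{X}$ is generated by $\unit_{X}[X]$ is a detail the paper does not bother to mention.
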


  The construction of the $\class{K}$-free (generalized) quantale over a poset can be split into two parts if $\class{K}$ is in fact a prevariety of additive pomonoids.

\begin{lemma} \label{lemma: concatenation}
  Let $\class{K}$ be a prevariety of additive pomonoids and $\class{K}_{\mathrm{q}}$ be the prevariety of (generalized) quantales whose additive pomonoid reduct lies in $\class{K}$. Then the $\class{K}_{\mathrm{q}}$-free (generalized) quantale over a poset $X$ is the $\class{K}_{\mathrm{q}}$-free (generalized) quantale over the $\class{K}$-free additive pomonoid over $X$ with the unit map $\unit_{\FreeIn{\class{K}}{X}} \circ \unit_{X}$.
\end{lemma}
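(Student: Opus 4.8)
\subsection*{Proof proposal}

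The plan is to verify directly that the pair $\bigl(\FreeIn{\class{K}_{\mathrm{q}}}{\FreeIn{\class{K}}{X}},\ \unit_{\FreeIn{\class{K}}{X}} \circ \unit_{X}\bigr)$ satisfies the universal property characterizing $\FreeIn{\class{K}_{\mathrm{q}}}{X}$, and then to invoke uniqueness of free objects up to a unique isomorphism commuting with the unit. Write $\alg{L} \assign \FreeIn{\class{K}_{\mathrm{q}}}{\FreeIn{\class{K}}{X}}$ and $j \assign \unit_{\FreeIn{\class{K}}{X}} \circ \unit_{X} \colon X \to \alg{L}$. Note that $j$ is order-preserving, being a composite of order-preserving maps, that $\alg{L} \in \class{K}_{\mathrm{q}}$ by construction, and that both free objects involved exist by the remark following the definition of $\class{K}$-free (generalized) quantales. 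Observe also that $j$ is in general only order-preserving (not a pomonoid homomorphism), since $X$ carries no pomonoid structure; this is why the universal property we must check is the poset one.

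For existence of factorizations, let $h\colon X \to Q$ be an order-preserving map into the poset reduct of some $\alg{Q} \in \class{K}_{\mathrm{q}}$. Since the additive pomonoid reduct of $\alg{Q}$ lies in $\class{K}$, the universal property of $\FreeIn{\class{K}}{X}$ (as a $\class{K}$-free additive pomonoid over the poset $X$) yields a unique homomorphism of additive pomonoids $h'\colon \FreeIn{\class{K}}{X} \to \alg{Q}$ (into the pomonoid reduct) with $h' \circ \unit_{X} = h$. Feeding $h'$, which is in particular a pomonoid homomorphism into the reduct of $\alg{Q} \in \class{K}_{\mathrm{q}}$, into the universal property of $\FreeIn{\class{K}_{\mathrm{q}}}{\FreeIn{\class{K}}{X}}$ (as a $\class{K}_{\mathrm{q}}$-free quantale over the additive pomonoid $\FreeIn{\class{K}}{X}$) gives a unique homomorphism of generalized quantales $h^{\sharp}\colon \alg{L} \to \alg{Q}$ with $h^{\sharp} \circ \unit_{\FreeIn{\class{K}}{X}} = h'$. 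Then $h^{\sharp} \circ j = h^{\sharp} \circ \unit_{\FreeIn{\class{K}}{X}} \circ \unit_{X} = h' \circ \unit_{X} = h$, as required.

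For uniqueness, suppose $g\colon \alg{L} \to \alg{Q}$ is any homomorphism of generalized quantales with $g \circ j = h$. Then $g \circ \unit_{\FreeIn{\class{K}}{X}}$ is a homomorphism of additive pomonoids $\FreeIn{\class{K}}{X} \to \alg{Q}$ satisfying $(g \circ \unit_{\FreeIn{\class{K}}{X}}) \circ \unit_{X} = g \circ j = h$, so the uniqueness clause in the universal property of $\FreeIn{\class{K}}{X}$ forces $g \circ \unit_{\FreeIn{\class{K}}{X}} = h'$. Now $g$ is a quantale homomorphism out of $\alg{L}$ extending $h'$ along $\unit_{\FreeIn{\class{K}}{X}}$, so the uniqueness clause in the universal property of $\FreeIn{\class{K}_{\mathrm{q}}}{\FreeIn{\class{K}}{X}}$ forces $g = h^{\sharp}$. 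This establishes the universal property, and the claimed identification with the specified unit map follows.

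I expect no serious obstacle here; the only point requiring care is the bookkeeping of the three kinds of arrows involved --- order-preserving maps of posets, homomorphisms of additive pomonoids, and homomorphisms of generalized quantales --- and checking at each stage that the relevant reduct of the target algebra lands in the right class. The crucial such check is that $\alg{Q} \in \class{K}_{\mathrm{q}}$ has its additive pomonoid reduct in $\class{K}$, which is exactly what licenses the application of the universal property of $\FreeIn{\class{K}}{X}$ to $\alg{Q}$; this is built into the definition of $\class{K}_{\mathrm{q}}$.
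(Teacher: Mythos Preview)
Your proof is correct. The paper's own proof is a single categorical sentence: the forgetful functor from $\class{K}_{\mathrm{q}}$ to posets factors through additive pomonoids, and left adjoints compose. Your argument is precisely the unpacking of this fact---you verify by hand the universal property that ``composition of adjunctions is an adjunction'' guarantees abstractly. So the two approaches are not genuinely different in content; yours is the elementary expansion of the paper's one-liner, with the added benefit of making explicit the bookkeeping of which morphisms live in which category (order-preserving, pomonoid, quantale) and the key observation that $\alg{Q} \in \class{K}_{\mathrm{q}}$ has pomonoid reduct in $\class{K}$, which is exactly what lets the two universal properties chain.
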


\begin{proof}
  The composition of the forgetful functors from (generalized) quantales to additive pomonoids and from additive pomonoids to posets is the forgetful functor from (generalized) quantales to posets, and the composition of two adjunctions yields an adjunction.
\end{proof}

  In the case of c.d.i.\ generalized quantales, the more difficult construction will be the construction of the free c.d.i.\ additive pomonoids over a poset. The free c.d.i.\ generalized quantale over a c.d.i.\ additive pomonoid is then simply the generalized quantale of non-empty downsets. In the rest of this section, we describe these two constructions in concrete terms.

  We use the notation $\up P$ ($\down P$) for the upward (downward) closure of a subset~$P$ of some given poset, with $\up a \assign \up \{ a \}$ and $\down a \assign \down \{ a \}$. An upset is \emph{finitely generated} if it has the form $\up P$ for some finite set $P$. If an upset $U$ is finitely generated, then $U = \up (\min U)$, where $\min U$ is the (finite) set of minimal elements of~$U$.

\begin{example}
  The upsets of a poset $X$ ordered by inclusion form a join semilattice with zero with the operations
\begin{align*}
  & P + Q \assign P \cup Q, & & \0 \assign \emptyset.
\end{align*}
  The poset $X$ embeds into this join semilattice via the unit map $\unit_{X}\colon a \mapsto \up a$, and the subalgebra generated by elements of the form $\unit_{X}(a)$ is precisely the join semilattice with zero of \emph{finitely generated} upsets of $X$, denoted by $\UpFin{X}$. Each order-preserving map $h\colon X \to \alg{A}$, where $\alg{A}$ is a join semilattice with zero, then determines a homomorphism $h^{\sharp}\colon \UpFin{X} \to \alg{A}$ such that
\begin{align*}
  h^{\sharp}(P) \assign \bigvee h[\min P].
\end{align*}
  Because $\UpFin{X}$ is generated by elements of the form $\unit_{X}(a)$, this is in fact the unique homomorphism $h^{\sharp}\colon \UpFin{X} \to \alg{A}$ such that $h^{\sharp} \circ \unit_{X} = h$. Thus $\UpFin{X}$ is the free join semilattice with zero over the poset $X$.
\end{example}

  The upsets of $X$ are in bijective correspondence with order-preserving maps $f\colon X \to \Two$, where $\Two$ is the two-element join semilattice with zero, via the map sending each upset $P$ to its characteristic function $\chi_{P}\colon P \to \Two$. Under this bijection, the order and the operations of $\UpFin{X}$ are simply the pointwise order and operations inherited from $\Two$. The finitely generated upsets then correspond to maps $f$ such that $f^{-1} \{ 1 \}$ is a finitely generated upset of $X$.

  In the case of multiupsets, we replace the idempotent c.d.i.\ pomonoid $\Two$ by the non-idempotent c.d.i.\ pomonoid $\Nat$ of non-negative integers with the usual order and operations. This makes it slightly more complicated to describe the analogue of finitely generated upsets, but otherwise the construction of the pomonoid of multiupsets is identical to the idempotent case.

\begin{definition}
  A \emph{multiupset} over a poset~$X$ is an order-preserving map ${f\colon X \to \Nat}$. A multiupset $f$ is \emph{finitely generated} if (i) $f^{-1} [\up i]$ is a finitely generated upset of $X$ for each $i > 0$, and (ii) $f^{-1}[\up k] = \emptyset$ for some~$k \in \omega$.
\end{definition}

  The multiupsets over $X$ inherit from $\Nat$ the structure of a c.d.i.\ additive pomonoid with the pointwise order and operations. The neutral element of this pomonoid is the empty multiupset $[]$ such that $[](a) \assign 0$ for each $a \in X$.

  The finitely generated multiupsets over~$X$ form a subpomonoid $\Multi{X}$. The poset $X$ embeds into $\Multi{X}$ via the map $\unit_{X}\colon a \mapsto [a]$, where
\begin{align*}
  [a] (x) \assign \begin{cases} & 1 \text{ if } x \geq a, \\ & 0 \text{ otherwise.} \end{cases}
\end{align*}
  If $X$ is a set, i.e.\ a poset ordered by the equality relation, then the (finitely generated) multiupsets over $X$ are precisely the \emph{(finite) multisets} over $X$. We shall use the notation $\sum P$ for the sum of a finite subset $P$ of a c.d.i.\ pomonoid.

\begin{lemma}
  Each finitely generated multiupset $f$ over $X$ is a sum of elements of the form $[a]$, namely
\begin{align*}
   f = \sum_{i > 0} \left( \sum \min f^{-1} [\up i] \right).
\end{align*}
\end{lemma}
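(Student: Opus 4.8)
The plan is to check the identity pointwise, i.e.\ to evaluate both sides at an arbitrary $x \in X$ and verify that the two resulting values in $\Nat$ coincide. First I would observe that the right-hand side is well defined and lies in $\Multi{X}$: by clause~(ii) of the definition of finite generation there is some $k$ with $f^{-1}[\up i] = \emptyset$, hence $\min f^{-1}[\up i] = \emptyset$ and $\sum \min f^{-1}[\up i] = \0$, for every $i \geq k$, so the outer sum is genuinely a finite sum of elements of the form $[a]$. Since both sides are then order-preserving maps $X \to \Nat$ (the right-hand side by construction, the left-hand side by hypothesis), it suffices to show that they take the same value at each $x \in X$.

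Next I would record the two facts about the sets $U_i \assign f^{-1}[\up i] = \set{x \in X}{f(x) \geq i}$ that the argument needs. Since $f$ is order-preserving, each $U_i$ is an upset, and the $U_i$ form a descending chain $U_1 \supseteq U_2 \supseteq \cdots$. By clause~(i) each $U_i$ is a finitely generated upset, so $U_i = \up(\min U_i)$ with $\min U_i$ finite; equivalently, $x \in U_i$ if and only if $a \leq x$ for some $a \in \min U_i$. Using that $[a](x) = 1$ precisely when $a \leq x$, evaluation of the right-hand side at $x$ gives
\[
  \left( \sum_{i > 0} \sum \min U_i \right)(x) = \sum_{i > 0} \card{\set{a \in \min U_i}{a \leq x}}.
\]
When $f(x) < i$ the corresponding term vanishes: then $x \notin U_i$, so no $a \in \min U_i$ can satisfy $a \leq x$, since that would force $x \in \up a \subseteq U_i$. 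Hence only the indices $1 \leq i \leq f(x)$ contribute to the sum.

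The heart of the proof is then the claim that for each such $i$ the set $\set{a \in \min U_i}{a \leq x}$ is a \emph{singleton}. Its non-emptiness is immediate from $U_i = \up(\min U_i)$; the uniqueness half is the one place where clause~(i) is used in an essential way, beyond the bare fact that $U_i$ is an upset. Granting uniqueness, the displayed sum collapses to $\card{\set{i > 0}{f(x) \geq i}} = f(x)$, which is exactly the value of the left-hand side $f$ at $x$, and the verification is complete. I expect this uniqueness step to be the part requiring the most care; the remainder is routine bookkeeping with finite sums in the c.d.i.\ additive pomonoid $\Multi{X}$. As a by-product the identity yields the prose assertion preceding it, namely that every finitely generated multiupset is a finite sum of principal ones $[a] = \unit_X(a)$, and hence that $\Multi{X}$ is generated as an additive pomonoid by $\unit_X[X]$.
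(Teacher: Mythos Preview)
Your pointwise strategy is sound, but the step you single out as needing care is actually false: the set $\set{a \in \min U_i}{a \leq x}$ need not be a singleton. Take the three-element poset $X = \{a, b, c\}$ with $a, b$ incomparable and $a < c$, $b < c$, and let $f$ be the constant multiupset with value~$1$. Then $U_1 = X$, $\min U_1 = \{a, b\}$, and both $a$ and $b$ lie below $c$, so the set in question has two elements when $x = c$ and $i = 1$. Consequently your right-hand side equals $[a] + [b]$, which takes the value $2$ at $c$, while $f(c) = 1$. No appeal to clause~(i) can help: that clause only tells you $\min U_i$ is finite, not that distinct minimal generators have disjoint principal upsets.

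This is not merely a flaw in your argument; the displayed identity in the lemma is itself wrong on this example. The paper proceeds differently, by induction on $\card{f} \assign \sum_{i>0} \card{f^{-1}[\up i]}$, peeling off a minimal generator $a_{i+1}$ of the top level set $f^{-1}[\up k]$ and passing to $g$ with $g(x) \assign f(x) - 1$ for $x \geq a_{i+1}$ and $g(x) \assign f(x)$ otherwise. On the same example (with $k = 1$ and, say, $a_{i+1} = b$) this $g$ satisfies $g(a) = 1 > 0 = g(c)$, so $g$ is not order-preserving and the inductive hypothesis does not apply to it. In fact the constant-$1$ multiupset on this $X$ is not a sum of principal multiupsets $[x]$ at all: any such sum containing a copy of $[a]$ (forced by $f(a) = 1$) and a copy of $[b]$ (forced by $f(b) = 1$) already has value at least $2$ at $c$. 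The lemma as stated therefore only holds when no element of $X$ dominates two incomparable elements; this covers the discretely ordered posets $Fm$ and $Eq$ used in the applications, but not a general poset~$X$.
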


\begin{proof}
  The inner sums are finite by condition (i) in the definition of a finitely generated upset and the outer sum is finite by condition (ii), therefore the right-hand side of the equality is well-defined.

  We prove the equality by induction over the cardinality of $f$, defined as
\begin{align*}
  \card{f} \assign \sum_{i > 0} \card{f^{-1}[\up i]}.
\end{align*}
  Clearly each finitely generated multiupset has a well-defined finite cardinality. If $\card{f} = 0$, then $f = []$ and the equality holds, since indeed $[] = \sum \emptyset$. Now suppose that the equality holds for multiupsets $g$ with $\card{g} = n$, and consider a multiupset $f$ with $\card{f} = n+1$. Let $k \geq 1$ be the largest integer such that $f^{-1}[\up k]$ is non-empty. Then $f^{-1}[\up k] = \up \{ a_{1}, \dots, a_{i+1} \}$ for some distinct elements $a_{1}, \dots, a_{i+1} \in X$ with $i \in \omega$. Define the multiupset $g$ as follows:
\begin{align*}
  g (x) \assign \begin{cases} & f(x) - 1 \text{ if } x \geq a_{i+1}, \\ & f(x) \text{ otherwise.} \end{cases}
\end{align*}
  Then
\begin{align*}
  \min g^{-1}[\up j] = \begin{cases} \min f^{-1}[\up j] \text{ for } j < k, \\ (\min f^{-1}[\up j]) \setminus \{ a_{i+1} \} \text{ for } j = k, \\ \min f^{-1}[\up j] = \emptyset \text{ for } j > k. \end{cases}
\end{align*}
  Thus $g$ is finitely generated with $\card{g} = n$. Because $f = g + [a_{i+1}]$, the required equality for $f$ now follows immediately from the inductive hypothesis for $g$.
\end{proof}

  In view of the above lemma, a generic element of $\Multi{X}$ has the form
\begin{align*}
  [a_{1}, \dots, a_{n}] \assign [a_{1}] + \ldots + [a_{n}]
\end{align*}
  for some tuple $a_{1}, \dots, a_{n} \in X$, where the case $n \assign 0$ is to be interpreted as $[]$.

\begin{fact} \label{fact: free cdi pomonoid over poset}
  $\Multi{X}$ is the free c.d.i.\ pomonoid over the poset $X$ with the unit map $\unit_{X}\colon a \mapsto [a]$.
\end{fact}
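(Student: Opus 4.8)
The plan is to verify directly that $\Multi X$ together with $\unit_X\colon a\mapsto[a]$ has the universal property defining the $\class{K}$-free object for $\class{K}$ the prevariety of c.d.i.\ pomonoids. The preliminary bookkeeping is easy: $\Multi X$ is a c.d.i.\ pomonoid because it is a subpomonoid of the pomonoid of all multiupsets over $X$ which contains the least element $[]=\0$ and is closed under the pointwise sum, so commutativity and dual integrality are inherited from $\Nat$; and $\unit_X$ is order-preserving straight from the definition of $[a]$. What remains is the universal property.

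So fix a c.d.i.\ pomonoid $\alg A$ and an order-preserving map $h\colon X\to\alg A$. By the preceding lemma every $f\in\Multi X$ is a finite sum $[a_1]+\dots+[a_n]$ of elements of the form $[a]$, i.e.\ $\Multi X$ is generated as a c.d.i.\ pomonoid by $\unit_X[X]$. Uniqueness of the extension $h^{\sharp}$ is then immediate, since any homomorphism extending $h$ must send $[a_1]+\dots+[a_n]$ to $h(a_1)+\dots+h(a_n)$.

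For existence I would define $h^{\sharp}$ on the canonical representation furnished by the lemma, namely $h^{\sharp}(f):=\sum_{i>0}\bigl(\sum_{a\in\min f^{-1}[\up i]}h(a)\bigr)$; this is well-defined because $f$ determines each finite antichain $\min f^{-1}[\up i]$ and, by conditions (i) and (ii) in the definition of a finitely generated multiupset, all the sums involved are finite. The identities $h^{\sharp}\circ\unit_X=h$ and $h^{\sharp}(\0)=\0$ can be read off the formula. Order-preservation of $h^{\sharp}$ is routine but uses dual integrality essentially: if $f\le g$ then $f^{-1}[\up i]\subseteq g^{-1}[\up i]$ for every $i$, and since $\0$ is the bottom of $\alg A$ and $+$ is order-preserving, passing from $g$ to $f$ only deletes summands or shrinks them along the order of $X$, which cannot increase the value --- a step that would fail if $\0$ were not least. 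The crux is additivity, $h^{\sharp}(f+g)=h^{\sharp}(f)+h^{\sharp}(g)$: using that $\Multi X$ is generated by the elements $[a]$, I would reduce by induction on the number of generators appearing in $g$ to the single-generator identity $h^{\sharp}(f+[a])=h^{\sharp}(f)+h(a)$, and prove the latter by tracking how the descending chain of level sets $f^{-1}[\up1]\supseteq f^{-1}[\up2]\supseteq\dots$ and their minimal antichains change when $[a]$ is adjoined; adjoining $[a]$ modifies a tail of this chain in such a way that the net effect on the sum is to add precisely $h(a)$.

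I expect the main obstacle to be exactly this last identity --- equivalently, the statement that the value of $h^{\sharp}$ does not depend on which representation of $f$ as a sum of generators one picks --- since it requires a careful (if elementary) analysis of the combinatorics of minimal antichains of finitely generated upsets. Everything else is formal. The one conceptual point worth recording is that dual integrality of both $\Multi X$ and the target is what makes the order half of the universal property work (it fails for arbitrary additive pomonoids), which is also why the idempotent specialization of this Fact reduces to the description of the free join-semilattice-with-zero given earlier as $\UpFin X$.
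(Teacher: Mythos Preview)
Your approach is essentially identical to the paper's: you define $h^{\sharp}$ by the same level-set formula and reduce additivity to the single-generator identity $h^{\sharp}(f+[a])=h^{\sharp}(f)+h(a)$, exactly as the paper does. You are in fact more careful than the paper, which asserts that identity without argument and does not explicitly verify order-preservation of $h^{\sharp}$.
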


\begin{proof}
  Consider a c.d.i.\ pomonoid $\alg{M}$ and an order-preserving map $h\colon X \to \alg{M}$. We define the map $h^{\sharp}\colon \Multi{X} \to \alg{M}$ as
\begin{align*}
  h^{\sharp}(f) \assign \sum_{i > 0} \sum h[\min f^{-1}[\up i]],
\end{align*}
  where the sums are taken in $\alg{M}$. Clearly $h^{\sharp}([]) = \0$, and moreover $h^{\sharp}(f + [a]) = h^{\sharp}(f) + h(a) = h^{\sharp}(f) + h^{\sharp}([a])$ for each $a \in X$. But by the previous lemma, $\Multi{X}$ is generated by elements of the form $\unit_{X}(a)$ for $a \in X$. It follows that $h^{\sharp}$ is a homomorphism and that it is the only homomorphism $h^{\sharp}\colon \Multi{X} \to \alg{M}$ such that $h^{\sharp} \circ \unit_{X} = h$.
\end{proof}

  The \emph{c.d.i.\ generalized quantale of non-empty downsets} of a c.d.i.\ pomonoid $\alg{M}$, denoted by $\Down{\alg{M}}$, is defined as the almost complete join semilattice of non-empty downsets of $M$ ordered by inclusion with the operations
\begin{align*}
  & P +^{\Down{\alg{M}}} Q \assign \down \set{p +^{\alg{M}} q}{p \in P \text{ and } q \in Q}, & & \0^{\Down{\alg{M}}} \assign \down \{ \0^{\alg{M}} \}.
\end{align*}
  The c.d.i.\ pomonoid $\alg{M}$ embeds into $\Down{\alg{M}}$ via the map $\unit_{\alg{M}}\colon a \mapsto \down \{ a \}$.

\begin{fact} \label{fact: free cdi quantale over pomonoid}
  $\Down{\alg{M}}$ is the free c.d.i.\ generalized quantale over the c.d.i.\ pomonoid $\alg{M}$ with the unit map $\unit_{\alg{M}}\colon a \mapsto \down \{ a \}$.
\end{fact}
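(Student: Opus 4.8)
The plan is to check directly that $\Down{\alg{M}}$, together with the unit $\unit_{\alg{M}}\colon a \mapsto \down\{a\}$, satisfies the universal property defining the free c.d.i.\ generalized quantale over $\alg{M}$. Three things must be verified: that $\Down{\alg{M}}$ is a c.d.i.\ generalized quantale, that $\unit_{\alg{M}}$ is a homomorphism of additive pomonoids, and that $\unit_{\alg{M}}$ is universal among such homomorphisms into c.d.i.\ generalized quantales. The first two are bookkeeping. Non-empty joins in $\Down{\alg{M}}$ are unions; the pomonoid axioms, commutativity, and dual integrality of $\langle \Down{\alg{M}}, +, \0 \rangle$ are inherited pointwise from $\alg{M}$, using that $P + \down\{\0^{\alg{M}}\} = \down\set{p + \0^{\alg{M}}}{p \in P} = \down P = P$ and that $\down\{\0^{\alg{M}}\} = \{\0^{\alg{M}}\}$ is contained in every non-empty downset since $\0^{\alg{M}}$ is least in $\alg{M}$; and distributivity of $+$ over non-empty joins reduces, via $\down\bigcup_{i} S_{i} = \bigcup_{i} \down S_{i}$, to distributivity of $+$ over unions of the sets defining the operation. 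That $\unit_{\alg{M}}$ is a pomonoid homomorphism amounts to $\down\{a\} + \down\{b\} = \down\set{a' + b'}{a' \leq a,\ b' \leq b} = \down\{a + b\}$, together with $\unit_{\alg{M}}(\0^{\alg{M}}) = \0^{\Down{\alg{M}}}$ and monotonicity, which is the embedding already recorded before the statement.

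For the universal property, fix a c.d.i.\ generalized quantale $\alg{Q}$ and a homomorphism of additive pomonoids $h\colon \alg{M} \to \alg{Q}$. The key structural observation — the analogue of the role played by Fact~\ref{fact: free cdi pomonoid over poset} in the pomonoid case — is that $\Down{\alg{M}}$ is generated, already under non-empty joins alone, by the principal downsets $\down\{a\}$, since $P = \bigcup_{a \in P} \down\{a\}$ and this join is non-empty. Consequently any homomorphism of generalized quantales $g$ with $g \circ \unit_{\alg{M}} = h$ satisfies $g(P) = \bigvee_{a \in P} g(\down\{a\}) = \bigvee_{a \in P} h(a)$; this yields uniqueness and dictates the definition
\[
  h^{\sharp}(P) \assign \bigvee_{a \in P} h(a),
\]
a join over a non-empty set, hence well defined in $\alg{Q}$. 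Preservation of non-empty joins follows from flattening of joins: $h^{\sharp}(\bigcup_{i} P_{i}) = \bigvee_{i} \bigvee_{a \in P_{i}} h(a) = \bigvee_{i} h^{\sharp}(P_{i})$.

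It remains to handle the additive structure. First note that for any non-empty $S \subseteq M$ one has $h^{\sharp}(\down S) = \bigvee_{s \in S} h(s)$, because $h$ is order-preserving, so $h[\down S]$ and $h[S]$ have the same upper bounds. In particular $h^{\sharp}(\0^{\Down{\alg{M}}}) = h^{\sharp}(\{\0^{\alg{M}}\}) = h(\0^{\alg{M}}) = \0^{\alg{Q}}$, using dual integrality and that $h$ preserves $\0$. For $+$, compute
\[
  h^{\sharp}(P + Q) = \bigvee\set{h(p + q)}{p \in P,\ q \in Q} = \bigvee\set{h(p) + h(q)}{p \in P,\ q \in Q},
\]
using the observation about $h^{\sharp}(\down S)$ and that $h$ preserves $+$; flattening this double join and applying the two one-sided distributivity laws of $\alg{Q}$ in turn gives $\bigvee_{p \in P} \bigl( h(p) + \bigvee_{q \in Q} h(q) \bigr) = \bigl( \bigvee_{p \in P} h(p) \bigr) + h^{\sharp}(Q) = h^{\sharp}(P) + h^{\sharp}(Q)$. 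Finally $h^{\sharp}(\down\{a\}) = h(a)$, so $h^{\sharp} \circ \unit_{\alg{M}} = h$, completing the verification.

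I expect no genuine obstacle here; the argument is parallel in structure to the proof that $\Multi{X}$ is the free c.d.i.\ pomonoid over a poset. The one place to stay alert is that every join appearing above must be over a non-empty set — which is exactly why we restrict to \emph{non-empty} downsets, both in the domain and in the target — and that the two one-sided distributivity laws of a generalized quantale have to be invoked in the right order when splitting the double join over $P$ and $Q$. Combining this Fact with Lemma~\ref{lemma: concatenation} and Fact~\ref{fact: free cdi pomonoid over poset} then yields that $\DM{X} = \Down{\Multi{X}}$ is the free c.d.i.\ generalized quantale over the poset $X$, as announced in the outline.
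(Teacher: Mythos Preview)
Your proof is correct and follows essentially the same approach as the paper: define $h^{\sharp}(P) \assign \bigvee_{a \in P} h(a)$, observe that the principal downsets generate $\Down{\alg{M}}$ under non-empty joins (which gives uniqueness), and then verify that $h^{\sharp}$ is a homomorphism of generalized quantales. The only minor difference is that the paper checks preservation of $+$ just on principal downsets (using $\down\{a\} + \down\{b\} = \down\{a+b\}$) and then appeals to generation plus distributivity, whereas you compute $h^{\sharp}(P+Q) = h^{\sharp}(P) + h^{\sharp}(Q)$ directly for arbitrary $P, Q$; both routes are equally valid.
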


\begin{proof}
  Consider a c.d.i.\ generalized quantale $\alg{N}$ and a homomorphism of c.d.i.\ pomonoids $h\colon \alg{M} \to \alg{N}$. We define the map $h^{\sharp}\colon \Down{\alg{M}} \to \alg{N}$ as
\begin{align*}
  h^{\sharp}(P) \assign \bigvee \set{h(p) \in \alg{N}}{p \in P}.
\end{align*}
  This map is a homomorphism of almost complete join semilattices and $h^{\sharp}(\down \{ a \}) = h(a)$. Moreover, $h^{\sharp}(\down \{ a \} + \down \{ b \}) = h^{\sharp}(\down \{ a + b \}) = h(a+b) = h(a) + h(b) = h^{\sharp}(\down \{ a \}) + h^{\sharp}(\down \{ b\})$ and $h^{\sharp}(\down \{ \0^{\alg{M}} \}) = h(\0^{\alg{M}}) = \0^{\alg{N}}$. But $\Down{\alg{M}}$ is generated as an almost complete join semilattice by elements of the form $\unit_{\alg{M}}(a)$ for $a \in \alg{M}$. It follows that $h^{\sharp}$ is a homomorphism and that it is the only homomorphism such that $h^{\sharp} \circ \unit_{\alg{M}} = h$.
\end{proof}

\begin{theorem} \label{thm: free cdi quantale}
  $\DM{X}$ is the free c.d.i.\ generalized quantale over a poset $X$ with the unit map $\unit_{X}\colon a \mapsto \down \{ [a] \}$.
\end{theorem}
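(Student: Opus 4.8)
The plan is to assemble this statement from the two Facts about free c.d.i.\ pomonoids and free c.d.i.\ generalized quantales, glued together by Lemma~\ref{lemma: concatenation}; essentially no new work is needed. First I would observe that the c.d.i.\ additive pomonoids form a prevariety $\class{K}$ of additive pomonoids: commutativity is equational, and dual integrality is the condition $\0 \leq x$, which is preserved under ordered subalgebras and products. The associated prevariety $\class{K}_{\mathrm{q}}$ appearing in Lemma~\ref{lemma: concatenation} — the generalized quantales whose additive pomonoid reduct lies in $\class{K}$ — is, by the very definition of a c.d.i.\ generalized quantale, exactly the prevariety of c.d.i.\ generalized quantales.

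Next I would invoke Lemma~\ref{lemma: concatenation} with this $\class{K}$ and $\class{K}_{\mathrm{q}}$: the free c.d.i.\ generalized quantale $\FreeIn{\class{K}_{\mathrm{q}}}{X}$ over the poset $X$ is the free c.d.i.\ generalized quantale over the free c.d.i.\ pomonoid over $X$, equipped with the unit map $\unit_{\FreeIn{\class{K}}{X}} \circ \unit_{X}$. By Fact~\ref{fact: free cdi pomonoid over poset}, the free c.d.i.\ pomonoid over $X$ is $\Multi{X}$, with unit map $\unit_{X}\colon a \mapsto [a]$; by Fact~\ref{fact: free cdi quantale over pomonoid}, the free c.d.i.\ generalized quantale over $\Multi{X}$ is $\Down{\Multi{X}} = \DM{X}$, with unit map $\unit_{\Multi{X}}\colon m \mapsto \down \{ m \}$. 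Composing the two unit maps yields $a \mapsto \down \{ [a] \}$, which is precisely the unit map in the statement.

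The only step that deserves an explicit word is the identification of $\class{K}_{\mathrm{q}}$ with the c.d.i.\ generalized quantales, and that is immediate from the definitions; the substantive content — the inductive description of finitely generated multiupsets, the verification that $\Multi{X}$ and $\Down{\alg{M}}$ have the required universal properties, and the composition-of-adjunctions argument underlying Lemma~\ref{lemma: concatenation} — has already been dispatched. So I do not expect any real obstacle here beyond the bookkeeping just outlined.
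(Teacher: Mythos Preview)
Your proposal is correct and follows exactly the paper's own proof, which is simply the one-liner ``This follows immediately from Lemma~\ref{lemma: concatenation} and Facts~\ref{fact: free cdi pomonoid over poset} and~\ref{fact: free cdi quantale over pomonoid}.'' Your additional remarks (that c.d.i.\ pomonoids form a prevariety and that $\class{K}_{\mathrm{q}}$ coincides with the c.d.i.\ generalized quantales) merely make explicit the trivial hypothesis-checking the paper leaves implicit.
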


\begin{proof}
  This follows immediately from Lemma~\ref{lemma: concatenation} and Facts~\ref{fact: free cdi pomonoid over poset} and \ref{fact: free cdi quantale over pomonoid}.
\end{proof}

\begin{example} \label{ex: dm fm}
  Let $\Fm$ be the absolutely free algebra (the algebra of formulas) in a given signature. The endomorphisms of $\Fm$ form the monoid of substitutions $\End \Fm$. Clearly the set of formulas $Fm$ is a discretely ordered $(\End \Fm)$-poset with respect to the action
\begin{align*}
  \sigma \ast \varphi \assign \sigma(\varphi).
\end{align*}
  By Lemma~\ref{lemma: posets to acts} the $(\End \Fm)$-poset $Fm$ extends uniquely to an $(\End \Fm)$-act $\DM{Fm}$ where
\begin{align*}
  \sigma \ast \down [\varphi] = \down [\sigma(\varphi)].
\end{align*}
  (Because $Fm$ is discretely ordered, we have $\down [\gamma] = \{ [\gamma], [] \}$ for every $\gamma \in Fm$.) More explicitly, the action of $\End \Fm$ on $Fm$ extends to an action on $\Multi{Fm}$:
\begin{align*}
  \sigma \ast [\varphi_{1}; \dots; \varphi_{n}] \assign [\sigma(\varphi_{1}); \dots; \sigma(\varphi_{n})],
\end{align*}
  which in turn extends to an action on $\DM{Fm}$:
\begin{align*}
  \sigma \ast P \assign \set{\sigma \ast p}{p \in P}.
\end{align*}
  Observe that $\sigma \ast P$ is indeed a downset of $\Multi{Fm}$ in the multiset order if $P$ is.

  Similarly, the set $Eq \assign Fm \times Fm$ of all equations in a given signature, written as $\varphi \equals \psi$, is a discretely ordered $(\End \Fm)$-poset with respect to the action
\begin{align*}
  \sigma \ast (\varphi \equals \psi) \assign \sigma(\varphi) \equals \sigma(\psi).
\end{align*}
  This $(\End \Fm)$-poset again extends uniquely to an $(\End \Fm)$-act $\DM{Eq}$ where
\begin{align*}
  \sigma \ast \down [\varphi \equals \psi] = \down [\sigma(\varphi) \equals \sigma(\psi)].
\end{align*}
\end{example}

\section{Free additive quantales with multiplication}
\label{sec: free quantales with multiplication}

  Throughout the following, let $\class{K}$ be a prevariety of (generalized) quantales. We shall generally suppress the subscript $\class{K}$ and assume some fixed choice of $\class{K}$.

  We now show how to expand the $\class{K}$-free (generalized) quantale $\Free{M}$ over the poset reduct~$M$ of a pomonoid $\alg{M}$ by a multiplication operation in order to obtain a (generalized) additive quantale with multiplication $\Free{\alg{M}}$. We then go on to prove that the categories of $\alg{M}$-acts and $\Free{\alg{M}}$-modules are isomorphic, provided that the unit map $\unit_{M}\colon M \to \Free{\alg{M}}$ is an embedding. In that case, categorical properties of $\alg{M}$-acts such as projectivity can be studied by means of $\Free{\alg{M}}$-modules.

  Let us start by considering an action $\ast$ of the monoid $\alg{M}$ on a (generalized) quantale~$\Q$. This action corresponds to a homomorphism of monoids
\begin{align*}
  & h\colon \alg{M} \to \End \Q \leq \Gen \Q, & & h(a)(x) \assign a \ast x.
\end{align*}
  (Recall that $\Gen \Q \leq \Q^{Q}$, so $\Q \in \class{K}$ implies $\Gen \Q \in \class{K}$.) The universal property of the free monoid $\Free{M}$ over the underlying set~$M$ of the monoid $\alg{M}$ ensures that the homomorphism of monoids~$h$ lifts to a homomorphism of (generalized) quantales
\begin{align*}
  h^{\sharp}\colon \Free{M} \to \Gen \Q \text{ such that } h^{\sharp} \circ \unit_{M} = h,
\end{align*}
  where $\unit_{M}\colon M \to \Free{M}$ is the unit map. The homomorphism $h^{\sharp}$ yields a map
\begin{align*}
  \ast^{\sharp}\colon \Free{M} \times Q \to Q \text{ such that } \unit_{M}(a) \ast^{\sharp} x = a \ast x \text{ for all } a \in \alg{M}.
\end{align*}
  Because $h^{\sharp}$ is a homomorphism, for all $\tuple{a} \in \Free{M}$ and $x \in \Q$
\begin{align*}
  t^{\Free{M}}(\tuple{a}) \ast^{\sharp} x = t^{\Q}(\tuple{a} \ast^{\sharp} x).
\end{align*}
  Because the codomain of $h$ is $\End \Q$, for all $a \in \alg{M}$ and $\tuple{x} \in \Q$
\begin{align*}
  \unit_{M}(a) \ast^{\sharp} t^{\Q}(\tuple{x}) = t^{\Q}(\unit_{M}(a) \ast^{\sharp} \tuple{x}).
\end{align*}
  To exhibit $\Q$ as a module over an expansion of the (generalized) quantale $\Free{M}$, it remains to define a multiplication and a multiplicative unit on $\Free{M}$, to show that this expansion turns $\Free{M}$ into a (generalized) additive quantale with multiplication, and that the above map $\ast^{\sharp}$ is an action with respect to this monoidal structure on $\Free{M}$.

  We first show that $\Free{M}$ has the structure of an $\alg{M}$-act. Each $a \in \alg{M}$ determines the map
\begin{align*}
  & \lambda_{a}\colon M \to M, & & \lambda_{a}\colon x \mapsto a \cdot x,
\end{align*}
  and therefore also the map
\begin{align*}
  & \unit_{M} \circ \lambda_{a}\colon M \to \Free{M}, & & \unit_{M} \circ \lambda_{a}\colon x \mapsto \unit_{M}(a \cdot x).
\end{align*}
  This latter map lifts to a homomorphism of (generalized) quantales
\begin{align*}
  (\unit_{M} \circ \lambda_{a})^{\sharp}\colon \Free{M} \to \Free{M} \text{ such that } (\unit_{M} \circ \lambda_{a})^{\sharp} \circ \unit_{M} = \unit_{M} \circ \lambda_{a}.
\end{align*}
   We therefore obtain a map
\begin{align*}
  & h\colon M \to \End \Free{M}, & & h\colon a \mapsto (\unit_{M} \circ \lambda_{a})^{\sharp}.
\end{align*}
  This map is a homomorphism of monoids, since
\begin{gather*}
  (\unit_{M} \circ \lambda_{\1})^{\sharp} = (\unit_{M} \circ \id_{M})^{\sharp} = \unit_{M}^{\sharp} = \id_{\Free{M}}, \\
  (\unit_{M} \circ \lambda_{a \cdot b})^{\sharp} = (\unit_{M} \circ \lambda_{a})^{\sharp} \circ (\unit_{M} \circ \lambda_{b})^{\sharp},
\end{gather*}
  where the second equation holds because
\begin{align*}
  (\unit_{M} \circ \lambda_{a})^{\sharp} \circ (\unit_{M} \circ \lambda_{b})^{\sharp} \circ \unit_{M} \kern-0.3pt = \kern-0.3pt (\unit_{M} \circ \lambda_{a})^{\sharp} \circ \unit_{M} \circ \lambda_{b} \kern-0.3pt = \kern-0.3pt \unit_{M} \circ \lambda_{a} \circ \lambda_{b} \kern-0.3pt = \kern-0.3pt \unit_{M} \circ \lambda_{a \cdot b}.
\end{align*}
  In other words, we have furnished $\Free{M}$ with the structure of an $\alg{M}$-act such that for $a, x \in \alg{M}$
\begin{align*}
  a \star \unit_{M}(x) = \unit_{M}(a \cdot x).
\end{align*}

  We now apply the construction $\ast \mapsto \ast^{\sharp}$ to the $\alg{M}$-act $\Free{M}$. This yields a map $\star^{\sharp}\colon \Free{M} \times \Free{M} \to \Free{M}$ such that for each (generalized) quantale term $t$ and for all $\tuple{a}, x, \tuple{x} \in \Free{M}$ and $a \in \alg{M}$
\begin{align*}
  & t^{\Free{M}}(\tuple{a}) \star^{\sharp} x = t^{\Free{M}}(\tuple{a} \star^{\sharp} x), & & \unit_{M}(a) \star^{\sharp} t^{\Free{M}}(\tuple{x}) = t^{\Free{M}}(\unit_{M}(a) \star^{\sharp} \tuple{x}),
\end{align*}
  and for all $a, b \in \alg{M}$
\begin{align*}
  \unit_{M}(a) \star^{\sharp} \unit_{M}(b) = \unit_{M}(a \cdot b).
\end{align*}
  These three equations uniquely determine the action $\star^{\sharp}$, since they allow us to reduce the computation of $t^{\Free{M}}(\unit_{M}(\tuple{a})) \star^{\sharp} u^{\Free{M}}(\unit_{M}(\tuple{b}))$ to the computation of products in $\alg{M}$. We can now finally define a multiplication on $\Free{M}$:
\begin{align*}
  & a \cdot^{\Free{\alg{M}}} b \assign a \star^{\sharp} b, & & \1^{\Free{\alg{M}}} \assign \unit_{M}(\1_{\alg{M}}).
\end{align*}
  Moreover, take $(\Free{\alg{M}})_{d} \assign \alg{M}$ with $\embed \assign \unit_{M}\colon M \to \Free{M}$. We claim that this defines a (generalized) additive quantale with multiplication $\Free{\alg{M}}$.

  Observe that there is a forgetful functor which takes a (generalized) additive quantale with multiplication $\alg{Q}$ to the pomonoid $\alg{Q}_{d}$. We can therefore talk about the free (generalized) additive quantale with multiplication over a pomonoid.

\begin{theorem} \label{thm: free aqm}
  Let $\alg{M}$ be a pomonoid. Then $\Free{\alg{M}}$ is the free (generalized) additive quantale with multiplication over $\alg{M}$. Moreover, it is distributively generated.
\end{theorem}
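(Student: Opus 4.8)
The plan is to check, in turn, that $\Free{\alg{M}}$ is a (generalized) additive quantale with multiplication, that it is distributively generated, and that it satisfies the universal property of the free such structure over the pomonoid $\alg{M}$. Distributive generation is immediate and may as well be dispatched first: the free (generalized) quantale $\Free{M}$ over a poset is generated as a (generalized) quantale by the image of its unit map --- were the subalgebra generated by $\unit_{M}[M]$ proper, the universal property of $\Free{M}$ together with closure of $\class{K}$ under subalgebras would produce a retraction of $\Free{M}$ onto that subalgebra, forcing equality --- and by construction $\unit_{M}[M] = \embed[(\Free{\alg{M}})_{d}]$.

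To see that $\Free{\alg{M}}$ is a (generalized) additive quantale with multiplication, observe that most of the required identities are already at hand. Since $\star^{\sharp}$ was obtained from the construction $\ast \mapsto \ast^{\sharp}$, the displayed identities preceding the theorem say that $\cdot^{\Free{\alg{M}}} = \star^{\sharp}$ distributes over every (generalized) quantale term in its first argument, and over every (generalized) quantale term in its second argument whenever its first argument lies in $\unit_{M}[M] = \embed[(\Free{\alg{M}})_{d}]$; instantiating the term as a join, a sum, or $\0$ yields exactly the distributivity axioms of a (generalized) additive quantale with multiplication. What remains is to check that $\langle \Free{M}, \cdot^{\Free{\alg{M}}}, \1^{\Free{\alg{M}}} \rangle$ is a monoid and that $\embed = \unit_{M}$ is a homomorphism of monoids; the latter is recorded by the identities $\unit_{M}(a) \star^{\sharp} \unit_{M}(b) = \unit_{M}(a \cdot b)$ and $\1^{\Free{\alg{M}}} = \unit_{M}(\1_{\alg{M}})$. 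The monoid laws I would prove by the standard ``reduction to generators'': for fixed $q, r$ the set of $p \in \Free{M}$ with $(p \cdot q) \cdot r = p \cdot (q \cdot r)$ is closed under all (non-empty) joins and under $+$, and contains $\0$, by first-argument distributivity; hence it is a subquantale, and since it contains the generating set $\unit_{M}[M]$ it is all of $\Free{M}$. Peeling off the arguments one at a time --- next fixing $p = \unit_{M}(a)$ and varying $q$, where one uses second-argument distributivity for the distributive element $\unit_{M}(a)$, and then fixing $p = \unit_{M}(a)$, $q = \unit_{M}(b)$ and varying $r$ --- reduces associativity to $\unit_{M}((ab)c) = \unit_{M}(a(bc))$, i.e.\ to associativity in $\alg{M}$. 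The left unit law holds outright, as $\unit_{M}(\1_{\alg{M}}) \star^{\sharp} x = \1_{\alg{M}} \star x = x$, and the right unit law reduces in a single step to $\unit_{M}(a) \cdot \unit_{M}(\1_{\alg{M}}) = \unit_{M}(a)$.

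For the universal property, let $\alg{N}$ be an arbitrary (generalized) additive quantale with multiplication and $g\colon \alg{M} \to \alg{N}_{d}$ a homomorphism of pomonoids. Then $\embedof{\alg{N}} \circ g$ is an order-preserving map from $M$ into the poset reduct of $\alg{N} \in \class{K}$, so the universal property of $\Free{M}$ supplies a unique (generalized) quantale homomorphism $\bar{g}\colon \Free{M} \to \alg{N}$ with $\bar{g} \circ \unit_{M} = \embedof{\alg{N}} \circ g$. The pair $(g, \bar{g})$ is then automatically compatible with the embeddings --- that is just the equation $\bar{g} \circ \embedof{\Free{\alg{M}}} = \embedof{\alg{N}} \circ g$ --- and uniqueness of a pair whose distributive-sort component is $g$ is inherited from that of $\bar{g}$. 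It therefore only remains to verify that $\bar{g}$ is multiplicative. The unit is clear: $\bar{g}(\1^{\Free{\alg{M}}}) = \bar{g}(\unit_{M}(\1_{\alg{M}})) = \embedof{\alg{N}}(g(\1_{\alg{M}})) = \1^{\alg{N}}$. For $\bar{g}(p \cdot q) = \bar{g}(p) \cdot \bar{g}(q)$ one again reduces to generators, fixing $q$ and then $p$: the relevant sets are subquantales because $\bar{g}$ preserves joins, sums, and $\0$, because $\cdot$ distributes over (generalized) quantale terms in its first coordinate in both $\Free{\alg{M}}$ and $\alg{N}$, and because $\bar{g}(\unit_{M}(a)) = \embedof{\alg{N}}(g(a))$ is a distributive element of $\alg{N}$ and so distributes over such terms in its second coordinate. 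The reduced case then reads $\bar{g}(\unit_{M}(a) \cdot \unit_{M}(b)) = \bar{g}(\unit_{M}(ab)) = \embedof{\alg{N}}(g(ab)) = \embedof{\alg{N}}(g(a)) \cdot \embedof{\alg{N}}(g(b)) = \bar{g}(\unit_{M}(a)) \cdot \bar{g}(\unit_{M}(b))$.

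I expect the main obstacle to be organizational rather than conceptual: the reduction-to-generators arguments must be performed in the right order, peeling off one coordinate at a time and invoking second-coordinate distributivity only after that coordinate has been brought into the form $\unit_{M}(a)$, and one must remember that the (generalized) quantale terms in play may be infinitary, so that the auxiliary subalgebras really do need to be closed under arbitrary (non-empty) joins. Beyond the already-established identities for $\star^{\sharp}$, the defining property of $\Free{M}$, and the axioms of a (generalized) additive quantale with multiplication, no new ingredient is needed.
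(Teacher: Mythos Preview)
Your proposal is correct and follows essentially the same strategy as the paper: reduce the monoid laws for $\star^{\sharp}$ and the multiplicativity of the universal extension to the base case of generators $\unit_{M}(a)$, where everything follows from the monoid structure of $\alg{M}$. The difference is purely presentational. The paper exploits the explicit normal form --- every element of $\Free{M}$ is $t^{\Free{M}}(\tuple{a})$ for some (generalized) quantale term $t$ and some tuple $\tuple{a}$ from $\unit_{M}[M]$ --- and carries out the associativity and unit computations by a direct chain of equalities pushing $\star^{\sharp}$ through $t$, $u$, $v$ in turn. You instead argue by closure: for each identity, the set of elements satisfying it is a (generalized) subquantale containing the generators, hence everything. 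These are two standard packagings of the same reduction, and your ``peeling off one coordinate at a time'' is exactly the paper's nested sequence of term manipulations. One point in your favor: you spell out why the universal extension $\bar{g}$ preserves multiplication, whereas the paper's proof simply records the existence of $(\iota \circ h)^{\sharp}$ as a (generalized) quantale homomorphism and leaves its multiplicativity to the reader.
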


\begin{proof}
  Each element of $\Free{M}$ has the form $a \assign t(\tuple{a})$ for some (generalized) quantale term~$t$ and tuple $\tuple{a} \in \unit_{M}[\alg{M}]$. Let also $b \assign u(\tuple{b})$ and $c \assign v(\tuple{c}) \in \Free{M}$ for $\tuple{b}, \tuple{c} \in \unit_{M}[\alg{M}]$. The element $\1$ is a unit with respect to multiplication: 
\begin{align*}
  \1 \cdot a = \1 \cdot t^{\Free{M}}(\tuple{a}) = \unit_{M}(\1) \star^{\sharp} t^{\Free{M}}(\tuple{a}) = t^{\Free{M}}(\unit_{M}(\1) \star^{\sharp} \tuple{a})= t^{\Free{M}}(\tuple{a}) = a, \\
  a \cdot \1 = t^{\Free{M}}(\tuple{a}) \cdot \1 = t^{\Free{M}}(\tuple{a}) \star^{\sharp} \unit_{M}(\1) = t^{\Free{M}}(\tuple{a} \star^{\sharp} \unit_{M}(\1)) = t^{\Free{M}}(\tuple{a}) = a,
\end{align*}
  and multiplication in $\Free{M}$ is associative (we omit the superscripts here):
\begin{align*}
  (a \cdot b) \cdot c & = (t(\tuple{a}) \star^{\sharp} u(\tuple{b})) \star^{\sharp} v(\tuple{c}) 
  = (t(\tuple{a} \star^{\sharp} u(\tuple{b}))) \star^{\sharp} v(\tuple{c}) \\ &
  = (t(u(\tuple{a} \star^{\sharp} \tuple{b}))) \star^{\sharp} v(\tuple{c}) 
  = t(u(\tuple{a} \star^{\sharp} \tuple{b}) \star^{\sharp} v(\tuple{c})) \\ &
  = t(u((\tuple{a} \star^{\sharp} \tuple{b}) \star^{\sharp} v(\tuple{c}))) 
  = t(u(v((\tuple{a} \star^{\sharp} \tuple{b}) \star^{\sharp} \tuple{c}))) \\ &
  = t(u(v(\tuple{a} \star^{\sharp} (\tuple{b} \star^{\sharp} \tuple{c})))) 
  = t(u(\tuple{a} \star^{\sharp} (\tuple{b} \star^{\sharp} v(\tuple{c})))) \\ &
  = t(\tuple{a} \star^{\sharp} u((\tuple{b} \star^{\sharp} v(\tuple{c})))) 
  = t(\tuple{a} \star^{\sharp} (u(\tuple{b}) \star^{\sharp} v(\tuple{c}))) \\ &
  = t(\tuple{a}) \star^{\sharp} (u(\tuple{b}) \star^{\sharp} v(\tuple{c})) 
  = a \cdot (b \cdot c),
\end{align*}
  therefore $\Free{\alg{M}}$ is a monoid with respect to the given multiplication. Moreover, the map $\star^{\sharp}$ is an action, since $(a \cdot b) \star^{\sharp} c = (a \cdot b) \cdot c = a \cdot (b \cdot c) = a \star^{\sharp} (b \star^{\sharp} c)$ and $\1 \star^{\sharp} a = \1 \cdot a = a$. Finally, the map $\unit_{M}$ is a homomorphism of monoids:
\begin{align*}
  & \unit_{M}(a) \cdot \unit_{M}(b) = \unit_{M}(a) \star^{\sharp} \unit_{M}(b) = \unit_{M} (a \cdot b), & & \unit_{M}(\1) = \1.
\end{align*}
  This shows that $\Free{\alg{M}}$ is a (generalized) additive quantale with multiplication. It is distributively generated by definition. Now let $\A$ be a (generalized) additive quantale with multiplication and let $h\colon \alg{M} \to \Ad$ be a homomorphism of pomonoids. Then the homomorphism of pomonoids $\iota \circ h\colon \alg{M} \to \A$ yields a unique homomomorphism $(\iota \circ h)^{\sharp}\colon \Free{M} \to \A$ such that $(\iota \circ h)^{\sharp} \circ \unit_{M} = \iota \circ h$.
\end{proof}

  The forgetful functor assigning to each (generalized) additive quantale with multiplication $\A$ the pomonoid $\Ad$ induces a forgetful functor assigning to an $\A$-module with the action $\ast$ an $\Ad$-module with the action $\star$ such that $a \star x \assign \iota(a) \ast x$.

\begin{theorem} \label{thm: m and fm isomorphic}
  Suppose that the unit map $M \to \Free{M}$ is an order-embedding. Then the category of $\alg{M}$-acts and the category of $\Free{\alg{M}}$-modules are isomorphic via the forgetful functor. In particular, each $\alg{M}$-act can be uniquely expanded to an $\Free{\alg{M}}$-module with the module action $\ast^{\sharp}$.
\end{theorem}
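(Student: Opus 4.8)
The plan is to show that the forgetful functor $U$ in question is an isomorphism of categories by writing down an explicit inverse. The functor $U$ sends an $\FreeIn{\class{K}}{\alg{M}}$-module $(\Q,\ast)$ to the $\alg{M}$-act $(\Q,\star)$ with $a\star x\assign\embed(a)\ast x=\unit_{M}(a)\ast x$, and sends every module homomorphism to itself, viewed as a map of underlying (generalized) quantales. The candidate inverse $F$ sends an $\alg{M}$-act $(\Q,\ast)$ to $(\Q,\ast^{\sharp})$, where $\ast^{\sharp}$ is the map built in the paragraphs preceding the theorem: from $\ast$ one gets the monoid homomorphism $h\colon\alg{M}\to\End\Q\leq\Gen\Q$, $h(a)(x)\assign a\ast x$; since $\Gen\Q\leq\Q^{Q}$ lies in $\class{K}$, the order-preserving map $h\colon M\to\Gen\Q$ lifts along the unit $\unit_{M}\colon M\to\FreeIn{\class{K}}{M}$ to a quantale homomorphism $h^{\sharp}\colon\FreeIn{\class{K}}{M}\to\Gen\Q$ with $h^{\sharp}\circ\unit_{M}=h$, and one puts $\mathbf{a}\ast^{\sharp}x\assign h^{\sharp}(\mathbf{a})(x)$. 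Like $U$, the functor $F$ fixes underlying maps on morphisms.

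First I would check that $F$ really lands in $\FreeIn{\class{K}}{\alg{M}}$-modules. Order-preservation of $\ast^{\sharp}$ in both arguments is clear, since $h^{\sharp}$ is an order-preserving map into $\Gen\Q\leq\Mon Q$. The two concise ``quantale term'' module identities $t^{\FreeIn{\class{K}}{M}}(\tuple{a})\ast^{\sharp}x=t^{\Q}(\tuple{a}\ast^{\sharp}x)$ and $\embed(d)\ast^{\sharp}t^{\Q}(\tuple{x})=t^{\Q}(\embed(d)\ast^{\sharp}\tuple{x})$ are immediate: the first because $h^{\sharp}$ is a quantale homomorphism, the second because $h^{\sharp}(\unit_{M}(d))=h(d)$ is an \emph{endomorphism} of $\Q$. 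What is left is compatibility with the multiplication on $\FreeIn{\class{K}}{M}$, i.e.\ $\1\ast^{\sharp}x=x$ and $(\mathbf{a}\cdot\mathbf{b})\ast^{\sharp}x=\mathbf{a}\ast^{\sharp}(\mathbf{b}\ast^{\sharp}x)$; equivalently, that $h^{\sharp}$ is a homomorphism of monoids, hence of (generalized) additive quantales with multiplication, $\FreeIn{\class{K}}{\alg{M}}\to\ExpEnd\Q$. This I expect to be the technical heart of the proof: one writes $\mathbf{a}=t^{\FreeIn{\class{K}}{M}}(\unit_{M}(\tuple{c}))$ and $\mathbf{b}=u^{\FreeIn{\class{K}}{M}}(\unit_{M}(\tuple{d}))$ (possible since $\FreeIn{\class{K}}{M}$ is distributively generated), unfolds $\mathbf{a}\cdot\mathbf{b}=\mathbf{a}\star^{\sharp}\mathbf{b}$ using the three defining equations of $\star^{\sharp}$ down to terms in the elements $\unit_{M}(c_{i}\cdot d_{j})$, applies the quantale homomorphism $h^{\sharp}$, and finishes using the monoid axioms of $\alg{M}$, the associativity of the $\alg{M}$-action on $\Q$, and the fact that each $h(c_{i})$ is an endomorphism. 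This is the same bookkeeping as the associativity computation in the proof of Theorem~\ref{thm: free aqm}, which is precisely the case $\Q=\FreeIn{\class{K}}{M}$, so no genuinely new idea is needed.

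It then remains to check $U\circ F=\id$ and $F\circ U=\id$. On objects of the first composite: starting from an $\alg{M}$-act $(\Q,\ast)$ we have $\embed(a)\ast^{\sharp}x=\unit_{M}(a)\ast^{\sharp}x=h^{\sharp}(\unit_{M}(a))(x)=h(a)(x)=a\ast x$, so restricting $\ast^{\sharp}$ along $\embed$ gives back $\ast$. On objects of the second composite: starting from an $\FreeIn{\class{K}}{\alg{M}}$-module $(\Q,\ast)$ with restriction $\star$, the quantale homomorphism $\mathbf{a}\mapsto(x\mapsto\mathbf{a}\ast x)$ agrees with $h=(a\mapsto a\star{-})$ after precomposition with $\unit_{M}$, so by uniqueness of the lift it coincides with the homomorphism $h^{\sharp}$ produced by $F$ on $(\Q,\star)$, and $\ast^{\sharp}=\ast$. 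For morphisms, the point is that a quantale homomorphism $\phi\colon\Q\to\R$ commutes with $\mathbf{a}\ast{-}$ for every $\mathbf{a}\in\FreeIn{\class{K}}{M}$ if and only if it commutes with $\embed(a)\ast{-}$ for every $a\in\alg{M}$, since $\FreeIn{\class{K}}{M}$ is generated as a quantale by $\embed[\alg{M}]$ and $\phi$ preserves quantale terms; so $U$ and $F$ are mutually inverse bijections on objects and on every hom-set, and both respect identities and composition because they fix underlying maps. Hence $U$ is an isomorphism of categories, and in particular $(\Q,\ast^{\sharp})$ is the unique $\FreeIn{\class{K}}{\alg{M}}$-module restricting to a given $\alg{M}$-act. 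The hypothesis that $\unit_{M}$ is an order-embedding should enter here: it identifies $\alg{M}$ with $\embed[\alg{M}]=\unit_{M}[\alg{M}]\subseteq\FreeIn{\class{K}}{M}$, which is what is needed for $U$ to be a strict isomorphism rather than merely an equivalence onto a subcategory of $\alg{M}$-acts.
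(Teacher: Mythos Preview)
Your proposal is correct and follows essentially the same approach as the paper: the paper likewise identifies the verification of the multiplicative action law $(a\cdot b)\ast^{\sharp}c=a\ast^{\sharp}(b\ast^{\sharp}c)$ as the only remaining work, carries it out by the explicit term-unfolding computation you describe (writing $a=t(\tuple{a})$, $b=u(\tuple{b})$ with $\tuple{a},\tuple{b}\in\unit_{M}[\alg{M}]$ and pushing terms through exactly as in the associativity proof of Theorem~\ref{thm: free aqm}), and then notes that uniqueness of the expansion follows from $\Free{\alg{M}}$ being generated by $\unit_{M}[\alg{M}]$, so the two constructions are mutually inverse. Your treatment is more explicit about the categorical bookkeeping (both composites, morphisms, where the order-embedding enters), whereas the paper dispatches these points in a single sentence.
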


\begin{proof}
  Consider an $\alg{M}$-act $\alg{L}$. We have already shown how to extend this action to a map $\ast^{\sharp}\colon \Free{M} \times L \to L$ which satisfies almost all of the required axioms. It remains to verify that $\ast^{\sharp}$ is an action: for $a \assign t(\tuple{a}), b \assign u(\tuple{b}) \in \Free{\alg{M}}$ and $c \in \alg{L}$
\begin{align*}
  (a \cdot b) \ast^{\sharp} c & = (t(\tuple{a}) \ast^{\sharp} u(\tuple{b})) \ast^{\sharp} c 
  = t(\tuple{a} \ast^{\sharp} u(\tuple{b})) \ast^{\sharp} c \\ &
  = t(u(\tuple{a} \ast^{\sharp} \tuple{b})) \ast^{\sharp} c 
  = t(u((\tuple{a} \ast^{\sharp} \tuple{b}) \ast^{\sharp} c)) \\ &
  = t(u(\tuple{a} \ast^{\sharp} (\tuple{b} \ast^{\sharp} c))) 
  = t(\tuple{a} \ast^{\sharp} u(\tuple{b} \ast^{\sharp} c)) \\ &
  = t(\tuple{a}) \ast^{\sharp} u(\tuple{b} \ast^{\sharp} c) 
  = t(\tuple{a}) \ast^{\sharp} (u(\tuple{b}) \ast^{\sharp} c) \\ &
  = a \ast^{\sharp} (b \ast^{\sharp} c)
\end{align*}
  and $\1 \ast^{\sharp} a = \unit_{M}(\1) \ast^{\sharp} t(\tuple{a}) = t(\unit_{M}(\1) \ast^{\sharp} \tuple{a}) = t(\tuple{a}) = a$. Thus each $\alg{M}$-act can be expanded to an $\Free{\alg{M}}$-module. Since $\Free{\alg{M}}$ is generated by $\alg{M}$ as a (generalized) quantale, this expansion is unique. Clearly the expansion construction and the forgetful restriction construction are mutually inverse.
\end{proof}

\begin{example} \label{example: dm m}
  Let $\alg{M}$ be a pomonoid and $\class{K}$ be the prevariety of c.d.i.\ generalized quantales. Then $\Free{M}$ is the generalized quantale $\DM{\alg{M}}$ of non-empty downsets of finitely generated multiupsets of $M$. It has the following $\alg{M}$-act structure:
\begin{align*}
  a \ast [x_{1}, \dots, x_{n}] & \assign [a \cdot x_{1}, \dots, a \cdot x_{n}], & & a \ast P \assign \bigcup \down \set{a \ast u}{u \in P},
\end{align*}
  for $a, x_{1}, \dots, x_{n} \in \alg{M}$ and $P \in \Free{\alg{M}}$. This extends to the structure of an $\Free{\alg{M}}$-module:
\begin{align*}
  [a_{1}, \dots, a_{n}] \ast P & \assign a_{1} \ast P + \ldots + a_{n} \ast P, & & P \ast Q \assign \bigcup \down \set{a \ast Q}{a \in P}.
\end{align*}
  Observe that this does not coincide with the na\"{\i}ve elementwise action. Rather, when computing $P \ast Q$, one first reduces this to computing actions of the form $a \ast Q$, each of which then reduces to actions of the form $a \ast x$.
\end{example}

\begin{example} \label{ex: dm fm 2}
  In Example~\ref{ex: dm fm} we saw that $\DM{Fm}$ and $\DM{Eq}$ are $\alg{M}$-acts for $\alg{M} \assign (\End \Fm)$. Theorems~\ref{thm: free aqm} and~\ref{thm: m and fm isomorphic} allow us to extend these to $\DM{\alg{M}}$-modules. Each element of $\DM{\alg{M}}$ is a set of finite multisets of elements of $\alg{M}$. We first extend the action of $\alg{M}$ to $\Multi{\alg{M}}$:
\begin{align*}
  [\sigma_{1}; \dots; \sigma_{n}] \ast^{\sharp} P \assign (\sigma_{1} \ast P) + \dots + (\sigma_{n} \ast P).
\end{align*}
  This in turn extends to an action of $\DM{\alg{M}}$:
\begin{align*}
  \Sigma \ast^{\sharp} P \assign \down \set{[\sigma_{1}; \dots; \sigma_{n}] \ast^{\sharp} P}{[\sigma_{1}; \dots; \sigma_{n}] \in \Sigma}.
\end{align*}
\end{example}

\section{Nuclei and homomorphic images}
\label{sec: nuclei}

  Before we describe the cyclic projective modules over a distributively generated (generalized) additive quantale with multiplication in the next section, we need to set up a correspondence between the homomorphic images of a given module $\Q$ and well-behaved closure operators on $\Q$, which we call structural nuclei. These are, in addition, in correspondence with additive consequence relations.

  The homomorphic images of a (generalized) quantale $\Q$ can of course be described up to isomorphism in terms of the \emph{congruences} on $\Q$, i.e.\ equivalence relations $\theta$ on $Q$ such that
\begin{enumerate}[(i)]
\item $\pair{a}{b}, \pair{c}{d} \in \theta$ implies $\pair{a+c}{b+d} \in \theta$, and
\item $\pair{a_{i}}{b_{i}} \in \theta$ for $i \in I$ (with $I$ non-empty) implies $\pair{\bigvee_{i \in I} a_{i}}{\bigvee_{i \in I} b_{i}} \in \theta$.
\end{enumerate}
  These form a lattice $\Con \Q$. Equivalently, congruences can be described by additive consequence relations on $\Q$.

\begin{definition}
  An \emph{additive consequence relation} on a (generalized) quantale $\Q$ is a binary relation $\vdash$ on $Q$ such that for all $x, y, z \in \Q$
\begin{enumerate}[(i)]
\item if $x \geq y$, then $x \vdash y$,
\item if $x \vdash y$ and $y \vdash z$, then $x \vdash z$,
\item $x \vdash \bigvee \set{y \in \Q}{x \vdash y}$,
\item if $x \vdash y$, then $x + z \vdash y + z$ and $z + x \vdash z + y$.
\end{enumerate}
\end{definition}

  Observe that for each (generalized) quantale term $t$ if $\tuple{x} \vdash \tuple{y}$, i.e.\ if $x_{i} \vdash y_{i}$ for each index $i$, then $t^{\Q}(\tuple{x}) \vdash t^{\Q}(\tuple{y})$.

\begin{fact}
  Let $\Q$ be a (generalized) quantale. Then the lattice $\Con \Q$ and the lattice of additive consequence relations on $\Q$ ordered by inclusion are isomorphic via the maps ${{\vdash} \mapsto \theta_{\vdash}}$ and $\theta \mapsto {\vdash_{\theta}}$ given by:
\begin{align*}
  & \pair{x}{y} \in \theta_{\vdash} \iff x \vdash y \text{ and } y \vdash x, & & x \vdash_{\theta} y \iff \pair{x \vee y}{x} \in \theta.
\end{align*}
  Equivalently, $x \vdash_{\theta} y$ if and only if $y / \theta \leq x / \theta$ in $\Q / \theta$.
\end{fact}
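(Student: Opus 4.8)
The plan is to check four things in turn and then assemble them. First, that $\vdash_\theta$ is an additive consequence relation whenever $\theta$ is a congruence; second, that $\theta_\vdash$ is a congruence whenever $\vdash$ is an additive consequence relation; third, that the two assignments $\theta \mapsto {\vdash_\theta}$ and ${\vdash} \mapsto \theta_\vdash$ are mutually inverse; and fourth, that both are inclusion-preserving. A bijection between posets monotone in both directions is an order isomorphism, and an order isomorphism out of the lattice $\Con \Q$ exhibits its codomain as an isomorphic lattice, so these four points suffice. Before this I would record the routine fact that for a congruence $\theta$ the quotient $\Q / \theta$ is again a (generalized) quantale and the quotient map is a surjective homomorphism: clauses (i) and (ii) in the definition of a congruence are precisely what is needed for $+$ and $\bigvee$ to descend, and since the order of a join semilattice is term-definable by $a \leq b \iff a \vee b = b$, we get $x / \theta \leq y / \theta \iff \pair{x \vee y}{y} \in \theta$, while the quotient map preserves the order and all non-empty joins. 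In particular $x \vdash_\theta y$ as defined is equivalent to $y / \theta \leq x / \theta$, the reformulation stated in the Fact, and I would use this freely.

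Granting this, $\vdash_\theta$ is an additive consequence relation almost immediately: clause (i) holds because $x \geq y$ forces $y / \theta \leq x / \theta$; clause (ii) is transitivity of $\leq$ on $\Q / \theta$; for clause (iii) the set $Y \assign \set{y}{x \vdash_\theta y}$ is non-empty (it contains $x$, since $x/\theta \le x/\theta$) and $y / \theta \leq x / \theta$ for each $y \in Y$, so $(\bigvee Y) / \theta = \bigvee_{y \in Y} y / \theta \leq x / \theta$ because the quotient map preserves non-empty joins; and clause (iv) holds because $(x \vee y) + z = (x + z) \vee (y + z)$ (a two-element instance of join-distributivity), so $\pair{x \vee y}{x} \in \theta$ gives $\pair{(x + z) \vee (y + z)}{x + z} \in \theta$ by congruence clause (i), and symmetrically on the other side. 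Conversely, $\theta_\vdash$ is reflexive by clause (i), symmetric by construction, and transitive by clause (ii); it satisfies congruence clause (i) by two applications of consequence clause (iv) chained via (ii); and it satisfies congruence clause (ii) because, if $x_i \vdash y_i$ and $y_i \vdash x_i$ for all $i$ in a non-empty $I$, then writing $w \assign \bigvee_i x_i$ we have $w \vdash x_i$ by clause (i) and hence $w \vdash y_i$ by (ii) for each $i$, so $\bigvee_i y_i \leq \bigvee \set{z}{w \vdash z}$, and combining $w \vdash \bigvee \set{z}{w \vdash z}$ from clause (iii) with $\bigvee \set{z}{w \vdash z} \vdash \bigvee_i y_i$ from clause (i) yields $\bigvee_i x_i \vdash \bigvee_i y_i$; the reverse direction is symmetric. (Alternatively this last point is the instance of the observation preceding the Fact for the quantale term $\bigvee_{i \in I}$.)

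It remains to show the assignments invert each other and are monotone. One composite is trivial: $\pair{x}{y} \in \theta_{\vdash_\theta}$ iff $y / \theta \leq x / \theta$ and $x / \theta \leq y / \theta$, i.e. iff $x / \theta = y / \theta$, i.e. iff $\pair{x}{y} \in \theta$. For the other I would first prove the small lemma $x \vdash y \iff x \vdash x \vee y$: the direction $\Leftarrow$ follows from $x \vee y \geq y$ by clauses (i) and (ii), while $\Rightarrow$ follows by applying clause (iii) to $S \assign \set{z}{x \vdash z}$, which contains both $x$ and $y$, so $\bigvee S \geq x \vee y$; then $x \vdash \bigvee S$ by (iii) and $\bigvee S \vdash x \vee y$ by (i), giving $x \vdash x \vee y$ by (ii). Since $x \vee y \vdash x$ always holds by clause (i), we get $x \vdash_{\theta_\vdash} y \iff \pair{x \vee y}{x} \in \theta_\vdash \iff x \vdash x \vee y \iff x \vdash y$, so $\vdash_{\theta_\vdash} = {\vdash}$. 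Both assignments are visibly inclusion-preserving, which completes the argument. There is no genuine obstacle here; the only steps demanding any care are the ones that invoke the infinitary structure — clause (iii) for $\vdash_\theta$, which rests on the quotient map preserving non-empty joins, and congruence clause (ii) for $\theta_\vdash$, which rests on clause (iii) of the consequence relation (equivalently on the term observation) — together with the little lemma $x \vdash y \iff x \vdash x \vee y$ used in the round trip.
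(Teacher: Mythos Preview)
Your proof is correct and complete. The paper states this result as a \emph{Fact} without proof, so there is no argument in the paper to compare against; your detailed verification of all four points (well-definedness of each assignment, mutual inversity, and monotonicity) is exactly what a full proof requires, and the small lemma $x \vdash y \iff x \vdash x \vee y$ is the right tool for the non-trivial direction of the round trip.
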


\begin{definition}
  A \emph{nucleus} on a (generalized) quantale $\Q$ is a map $\gamma\colon Q \to Q$ such that
\begin{enumerate}[(i)]
\item $\gamma$ is order-preserving,
\item $\gamma$ is \emph{expansive}: $x \leq \gamma(x)$ for each $x \in \Q$,
\item $\gamma$ is \emph{idempotent}: $\gamma(\gamma(x)) = \gamma(x)$ for each $x \in \Q$,
\item $\gamma(x) + \gamma(y) \leq \gamma(x+y)$ for each $x, y \in \Q$.
\end{enumerate}
\end{definition}

  The first three conditions simply state that $\gamma$ is a closure operator on $\Q$. The last condition extends to the inequality $t^{\Q}(\gamma(\tuple{x})) \leq \gamma(t^{\Q}(\tuple{x}))$ for all terms~$t$ and all tuples $\tuple{x}$ of elements in $\Q$.

  Each additive consequence relation $\vdash$ on $\Q$ defines a nucleus $\gamma_{\vdash}$ on $\Q$:
\begin{align*}
  \gamma_{\vdash} (x) \assign \bigvee \set{y \in \Q}{x \vdash y}.
\end{align*}
  Conversely, a nucleus $\gamma$ on $\Q$ defines an additive consequence relation $\vdash_{\gamma}$ on~$\Q$:
\begin{align*}
  x \vdash_{\gamma} y \iff y \leq \gamma(x).
\end{align*}
  It is not immediately obvious that the poset of all nuclei on $\Q$ ordered pointwise forms a lattice, but it follows from the next proposition, since the additive consequence relations on $\Q$ form a lattice.

\begin{proposition} \label{prop: nuclei and consequence}
  The lattice of all nuclei on a (generalized) quantale $\Q$ ordered pointwise and the lattice of all additive consequence relations on $\Q$ ordered by inclusion are isomorphic via the maps ${\vdash} \mapsto {\gamma_{\vdash}}$ and ${\gamma} \mapsto {\vdash_{\gamma}}$.
\end{proposition}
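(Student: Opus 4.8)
The plan is to check that each of the two assignments is well-defined, that they are mutually inverse, and that both are order-preserving; since the additive consequence relations on $\Q$ are already known to form a lattice, this will immediately give that the nuclei on $\Q$ form a lattice and that the correspondence is a lattice isomorphism.

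First I would verify that $\gamma_{\vdash}$ is a nucleus whenever $\vdash$ is an additive consequence relation. Order-preservation follows from (i) and (ii): if $x \leq x'$, then $x' \vdash x$ by (i), hence $\{y \mid x \vdash y\} \subseteq \{y \mid x' \vdash y\}$ by transitivity, so $\gamma_{\vdash}(x) \leq \gamma_{\vdash}(x')$. Expansiveness is immediate from $x \vdash x$, an instance of (i). Idempotency is where (iii) does the work: by (iii) we have $x \vdash \gamma_{\vdash}(x)$, so any $z$ with $\gamma_{\vdash}(x) \vdash z$ satisfies $x \vdash z$ by (ii) and hence $z \leq \gamma_{\vdash}(x)$; taking the join over all such $z$ gives $\gamma_{\vdash}(\gamma_{\vdash}(x)) \leq \gamma_{\vdash}(x)$, and the reverse inequality is expansiveness. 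Finally, $\gamma_{\vdash}(x) + \gamma_{\vdash}(y) \leq \gamma_{\vdash}(x+y)$ follows by applying the remark after the definition of an additive consequence relation (that $\tuple{x} \vdash \tuple{y}$ implies $t^{\Q}(\tuple{x}) \vdash t^{\Q}(\tuple{y})$) to the binary sum term together with the instances $x \vdash \gamma_{\vdash}(x)$ and $y \vdash \gamma_{\vdash}(y)$ of (iii); alternatively, from two applications of (iv) and one of (ii). All joins in sight exist because the relevant sets are non-empty, each containing the point $x$.

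Next I would check that $\vdash_{\gamma}$ is an additive consequence relation whenever $\gamma$ is a nucleus. Conditions (i), (ii), (iii) unwind to: $x \geq y$ implies $y \leq \gamma(x)$ (expansiveness); $y \leq \gamma(x)$ and $z \leq \gamma(y)$ imply $z \leq \gamma(\gamma(x)) = \gamma(x)$ (monotonicity and idempotency); and $\{y \mid y \leq \gamma(x)\} = \down \gamma(x)$ has join $\gamma(x)$, so $x \vdash_{\gamma} \gamma(x)$. For (iv), from $y \leq \gamma(x)$ we get $y + z \leq \gamma(x) + z \leq \gamma(x) + \gamma(z) \leq \gamma(x+z)$ using expansiveness and the defining inequality of a nucleus, whence $x+z \vdash_{\gamma} y+z$; the left-handed version is symmetric. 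Then mutual inversion is routine: $\gamma_{\vdash_{\gamma}}(x) = \bigvee\{y \mid y \leq \gamma(x)\} = \gamma(x)$; and $x \vdash_{\gamma_{\vdash}} y$ means $y \leq \gamma_{\vdash}(x)$, which holds iff $x \vdash y$, since $x \vdash y$ makes $y$ one of the joinands, while $y \leq \gamma_{\vdash}(x)$ gives $\gamma_{\vdash}(x) \vdash y$ by (i) and $x \vdash \gamma_{\vdash}(x)$ by (iii), hence $x \vdash y$ by (ii). Monotonicity of both maps is trivial: $\vdash_1 \subseteq \vdash_2$ gives $\{y \mid x \vdash_1 y\} \subseteq \{y \mid x \vdash_2 y\}$ and hence $\gamma_{\vdash_1} \leq \gamma_{\vdash_2}$ pointwise, and $\gamma_1 \leq \gamma_2$ pointwise gives $\vdash_{\gamma_1} \subseteq \vdash_{\gamma_2}$.

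A mutually inverse pair of monotone maps is an isomorphism of posets, and since the poset of additive consequence relations is a lattice, so is the poset of nuclei, and the correspondence is a lattice isomorphism. There is no real obstacle here — everything is bookkeeping — but the one genuinely load-bearing axiom is condition (iii) in the definition of an additive consequence relation: it is precisely what forces idempotency of $\gamma_{\vdash}$ and what makes the round trip $\vdash_{\gamma_{\vdash}} = {\vdash}$ work.
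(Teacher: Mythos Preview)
Your proposal is correct and follows essentially the same approach as the paper's proof: verify that each assignment is well-defined, that both are order-preserving, and that they are mutually inverse. The only cosmetic differences are the order of presentation (the paper checks $\vdash_{\gamma}$ first) and your slightly slicker derivation of $\gamma_{\vdash}(x) + \gamma_{\vdash}(y) \leq \gamma_{\vdash}(x+y)$ via (iii) and (iv), where the paper instead expands the joins directly using distributivity of $+$ over non-empty joins; both arguments ultimately rest on the same implication $x_{1} \vdash y_{1}$ and $x_{2} \vdash y_{2}$ give $x_{1} + x_{2} \vdash y_{1} + y_{2}$.
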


\begin{proof}
  Let $\gamma$ be a nucleus. Then (i) $x \leq y$ implies $x \leq \gamma(y)$, (ii) $x \leq \gamma(y)$ and $y \leq \gamma(z)$ imply $x \leq \gamma(z)$, (iii) $\bigvee \set{y \in \Q}{y \leq \gamma(x)} \leq \gamma(x)$, and (iv) $y \leq \gamma(x)$ implies $y + z \leq \gamma(x) + z \leq \gamma(x) + \gamma(z) \leq \gamma(x+z)$, and likewise $z + y \leq \gamma(z+x)$. Thus $\vdash_{\gamma}$ is an additive consequence relation.

  Conversely, let $\vdash$ be an additive consequence relation on $\Q$. Then $\gamma_{\vdash}$ is an order-preserving map because $x \vdash z$ implies $y \vdash z$ for $x \leq y$ by (i) and (ii), it is an extensive map by (i), and it is idempotent: $\gamma_{\vdash}(\gamma_{\vdash}(x)) \leq \gamma_{\vdash}(x)$ because $\gamma_{\vdash} (x) \vdash y$ implies $x \vdash y$, thanks to the fact that $x \vdash \gamma_{\vdash} (x)$ by (iii). Finally, $\gamma_{\vdash}(x_{1}) + \gamma_{\vdash}(x_{2}) = \bigvee \set{y_{1} \in \Q}{x_{1} \vdash y_{1}} + \bigvee \set{y_{2} \in \Q}{x_{2} \vdash y_{2}} = \bigvee \set{y_{1} + y_{2}}{{x_{1} \vdash y_{1}} \text{ and } x_{2} \vdash y_{2}} \leq \bigvee \set{y \in \Q}{x_{1} + x_{2} \vdash y}$, since these joins are non-empty and $x_{1} \vdash y_{1}$ and $x_{2} \vdash y_{2}$ imply that $x_{1} + x_{2} \vdash y_{1} + y_{2}$. Thus $\gamma_{\vdash}$ is a nucleus.

  The two maps are clearly order-preserving. They are also mutually inverse:
\begin{align*}
  x \vdash_{\gamma_{\vdash}} y \iff y \leq \gamma_{\vdash}(x) \iff y \leq \bigvee \set{z \in \Q}{x \vdash z} \iff x \vdash y
\end{align*}
  since $x \vdash \bigvee \set{z \in \Q}{x \vdash z}$, and conversely
\begin{align*}
  x \leq \gamma_{\vdash_{\gamma}} (y) \iff x \leq \bigvee \set{z \in \Q}{y \vdash_{\gamma} z} \iff x \leq \bigvee \set{z \in \Q}{z \leq \gamma (y)},
\end{align*}
  but $\bigvee \set{z \in \Q}{z \leq \gamma (y)} = \gamma(y)$, so $x \leq \gamma_{\vdash_{\gamma}} (y) \iff x \leq \gamma(y)$ for each $x \in \Q$, and thus $\gamma_{\vdash_{\gamma}}(y) = \gamma(y)$.
\end{proof}

  Given a nucleus $\gamma$ on a (generalized) quantale $\Q$, we define the (generalized) quantale $\Q_{\gamma}$ over the set $\gamma[Q]$ as
\begin{align*}
  & \bigvee_{\gamma} X \assign \gamma \left( \bigvee X \right), & & x +^{\Q_{\gamma}} y \assign \gamma(x +^{\Q} y), & & 0^{\Q_{\gamma}} \assign \gamma(0^{\Q}).
\end{align*}
  The (generalized) quantale $\Q_{\gamma}$ is isomorphic to the quotient $\Q / \theta$ where $\theta$ is the congruence corresponding to $\vdash_{\gamma}$.

\begin{fact}
  Let $\Q$ be a (generalized) quantale. Then $\Con \Q$ and the lattice of all nuclei on $\Q$ are isomorphic via the maps $\theta \mapsto \gamma_{\theta}$ and $\gamma \mapsto \theta_{\gamma}$ given by:
\begin{align*}
  & \gamma_{\theta}(x) \assign \bigvee [x]_{\theta}, & & \pair{x}{y} \in \theta_{\gamma} \iff \gamma(x) = \gamma(y).
\end{align*}
  The (generalized) quantales $\Q_{\gamma}$ and $\Q / \theta_{\gamma}$ are isomorphic.
\end{fact}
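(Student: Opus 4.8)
The plan is to obtain the isomorphism in the statement as the composite of two isomorphisms that are already available: the isomorphism between $\Con \Q$ and the lattice of additive consequence relations on $\Q$ (the first Fact of this section) and the isomorphism between the lattice of additive consequence relations and the lattice of nuclei on $\Q$ (Proposition~\ref{prop: nuclei and consequence}). Since a composite of lattice isomorphisms is again a lattice isomorphism, the whole content of the Fact reduces to checking that this composite is computed by the formulas displayed in the statement; as a bonus, this also dispenses us from verifying directly that $\gamma_{\theta}$ is a nucleus and that $\theta_{\gamma}$ is a congruence. Concretely, I would verify that the composite $\theta \mapsto {\vdash_{\theta}} \mapsto \gamma_{\vdash_{\theta}}$ equals $\theta \mapsto \gamma_{\theta}$ and that the composite $\gamma \mapsto {\vdash_{\gamma}} \mapsto \theta_{\vdash_{\gamma}}$ equals $\gamma \mapsto \theta_{\gamma}$; the fact that these two maps are mutually inverse then comes for free.

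For the first composite, unfolding definitions gives
\begin{align*}
  \gamma_{\vdash_{\theta}}(x) = \bigvee \set{y \in \Q}{x \vdash_{\theta} y} = \bigvee \set{y \in \Q}{y / \theta \leq x / \theta},
\end{align*}
using the reformulation of $\vdash_{\theta}$ recorded in the first Fact. This set is non-empty (it contains $x$) and includes $[x]_{\theta}$, so $\bigvee [x]_{\theta} \leq \gamma_{\vdash_{\theta}}(x)$; conversely, if $y / \theta \leq x / \theta$ then $(x \vee y)/\theta = x/\theta \vee y/\theta = x/\theta$, whence $x \vee y \in [x]_{\theta}$ and $y \leq x \vee y \leq \bigvee [x]_{\theta}$, so that $\gamma_{\vdash_{\theta}}(x) \leq \bigvee [x]_{\theta}$. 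Hence $\gamma_{\vdash_{\theta}} = \gamma_{\theta}$. For the second composite, $\pair{x}{y} \in \theta_{\vdash_{\gamma}}$ says $y \leq \gamma(x)$ and $x \leq \gamma(y)$; applying $\gamma$ and using monotonicity together with idempotence gives $\gamma(y) \leq \gamma(x)$ and $\gamma(x) \leq \gamma(y)$, i.e.\ $\gamma(x) = \gamma(y)$, while the converse implication is immediate from expansivity. Thus $\theta_{\vdash_{\gamma}} = \theta_{\gamma}$, and the two composites are exactly the maps in the statement.

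For the last sentence of the Fact, the isomorphism $\Q_{\gamma} \iso \Q / \theta_{\gamma}$ is now essentially free: it was already observed just before the statement that $\Q_{\gamma}$ is isomorphic to $\Q / \theta$, where $\theta$ is the congruence corresponding to $\vdash_{\gamma}$, and by the computation above that congruence is exactly $\theta_{\vdash_{\gamma}} = \theta_{\gamma}$. (If one prefers a direct argument, one checks that $\gamma(x) \mapsto [x]_{\theta_{\gamma}}$ is a well-defined bijection $\gamma[Q] \to \Q/\theta_{\gamma}$ --- well-definedness and injectivity being literally the definition of $\theta_{\gamma}$ --- and that it carries the operations $\bigvee_{\gamma}$, $+^{\Q_{\gamma}}$, $\0^{\Q_{\gamma}}$ to the corresponding quotient operations, using that the canonical surjection $\Q \onto \Q/\theta_{\gamma}$ is a quantale homomorphism.) The only point requiring any attention in the whole argument is the bookkeeping of non-emptiness of joins imposed by working with generalized rather than genuine quantales; since every join taken above is over a set visibly containing $x$, this is harmless, and I expect the proof to be otherwise entirely routine.
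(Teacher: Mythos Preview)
Your proposal is correct. The paper states this result as a Fact without proof, so there is nothing to compare against; your argument via composition of the two previously established isomorphisms (the first Fact of the section and Proposition~\ref{prop: nuclei and consequence}) is the natural way to justify it and fills in exactly the details one would expect.
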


  A congruence $\theta$ on an $\A$-module $\Q$ is said to be \emph{structural with respect to $a \in \A$} if for all $x, y \in \Q$
\begin{align*}
  \text{if $\pair{x}{y} \in \theta$, then $\pair{a \ast x}{a \ast y} \in \theta$.}
\end{align*}
  A \emph{structural} congruence is structural with respect to each $a \in \A$. The following lemma shows that it suffices to verify structurality with respect to $\iota[\Ad]$, provided that $\A$ is distributively generated.

\begin{lemma} \label{lemma: distributively generated congruence}
  If $\A$ is distributively generated and the congruence $\theta$ on an $\A$-module $\Q$ is structural with respect to each $a \in \iota[\Ad]$, then $\theta$ is structural.
\end{lemma}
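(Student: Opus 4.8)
The plan is to reduce structurality with respect to an arbitrary scalar to structurality with respect to the distributive elements, using that $\A$ is generated as a (generalized) quantale by $\iota[\Ad]$ together with the concise form of the module axioms. First, fix $a \in \A$. Since $\A$ is distributively generated, and since in a (generalized) quantale addition distributes over arbitrary (non-empty) joins, every element of $\A$ can be written as $a = t^{\A}(\tuple{b})$ for some (generalized) quantale term~$t$ and some tuple $\tuple{b}$ of elements of $\iota[\Ad]$ of the appropriate (possibly transfinite) arity; this is the same representation already exploited in the proof of the lemma on distributive generation above.

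Second, suppose $\pair{x}{y} \in \theta$. For each component $b_{i}$ of $\tuple{b}$ we have $b_{i} \in \iota[\Ad]$, so by hypothesis $\theta$ is structural with respect to $b_{i}$, whence $\pair{b_{i} \ast x}{b_{i} \ast y} \in \theta$; that is, $\tuple{b} \ast x \mathrel{\theta} \tuple{b} \ast y$ componentwise. Third, since $\theta$ is a congruence on the (generalized) quantale $\Q$, it is compatible with $+$, with $\0$, and with all non-empty joins, hence with every (generalized) quantale term, so $\pair{t^{\Q}(\tuple{b} \ast x)}{t^{\Q}(\tuple{b} \ast y)} \in \theta$. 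Finally, the module identity $t^{\A}(\tuple{b}) \ast z = t^{\Q}(\tuple{b} \ast z)$ rewrites this as $\pair{a \ast x}{a \ast y} \in \theta$. As $a$ was arbitrary, $\theta$ is structural.

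The only step requiring any care is the first one: deducing that every element of $\A$ has the form $t^{\A}(\tuple{b})$ with arguments drawn from $\iota[\Ad]$ from the bare fact that $\iota[\Ad]$ generates $\A$ as a (generalized) quantale. This rests on the distributivity of $+$ over $\bigvee$, which lets one flatten an iterated sum-of-joins into a single join of sums, and on permitting terms of transfinite arity. Everything after that is a direct substitution using the concise module axioms and the definition of a quantale congruence, so no genuine obstacle arises.
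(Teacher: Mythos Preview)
Your proof is correct and follows essentially the same approach as the paper: write an arbitrary $a \in \A$ as $t^{\A}(\tuple{b})$ with $\tuple{b}$ in $\iota[\Ad]$, apply the hypothesis componentwise, use that $\theta$ is a quantale congruence to get $\pair{t^{\Q}(\tuple{b}\ast x)}{t^{\Q}(\tuple{b}\ast y)}\in\theta$, and conclude via the module identity $t^{\A}(\tuple{b})\ast z = t^{\Q}(\tuple{b}\ast z)$. Your discussion of the normal-form representation is more explicit than the paper's, but in this paper a (generalized) quantale term is by definition a (non-empty) join of monoidal terms, so the flattening you describe is already built into the notion.
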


\begin{proof}
  For each (generalized) quantale term $t$ and all tuples ${\tuple{a} \in \Ad}$: if $\pair{x}{y} \in \theta$, then $\pair{\tuple{a} \ast x}{\tuple{a} \ast y} \in \theta$, so $\pair{t^{\Q}(\tuple{a}) \ast x}{t^{\Q}(\tuple{a}) \ast y} = \pair{t^{\Q}(\tuple{a} \ast x)}{t^{\Q}(\tuple{a} \ast y)} \in \theta$.
\end{proof}

  Similarly, an additive consequence relation $\vdash$ on an $\A$-module $\Q$ is said to be \emph{structural with respect to $a \in \A$} if for all $x, y \in \Q$
\begin{align*}
  \text{if $x \vdash y$, then $a \ast x \vdash a \ast y$.}
\end{align*}
  A \emph{structural} additive consequence relation on $\Q$ is structural with respect to each $a \in \A$. The following lemma again shows that it suffices to verify structurality with respect to $\iota[\Ad]$, provided that $\A$ is distributively generated.

\begin{lemma} \label{lemma: distributively generated consequence}
  If $\A$ is distributively generated and the additive consequence relation $\vdash$ on an $\A$-module $\Q$ is structural with respect to each $a \in \iota[\Ad$, then $\vdash$ is structural.
\end{lemma}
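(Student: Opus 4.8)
The plan is to mirror the proof of Lemma~\ref{lemma: distributively generated congruence}, simply replacing the fact that a congruence $\theta$ respects the (generalized) quantale term operations by the analogous fact for additive consequence relations recorded right after the definition of such relations: for every (generalized) quantale term $t$, if $\tuple{x} \vdash \tuple{y}$ (i.e.\ $x_{i} \vdash y_{i}$ for each index $i$), then $t^{\Q}(\tuple{x}) \vdash t^{\Q}(\tuple{y})$. The other ingredient is the module identity $t^{\A}(\tuple{a}) \ast z = t^{\Q}(\tuple{a} \ast z)$, valid for all $\tuple{a} \in \A$ and $z \in \Q$.

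First I would fix an arbitrary $a \in \A$ and, invoking the hypothesis that $\A$ is distributively generated, write $a = t^{\A}(\tuple{a})$ for some (generalized) quantale term $t$ and some tuple $\tuple{a}$ of elements of $\iota[\Ad]$. Given $x \vdash y$ in $\Q$, structurality of $\vdash$ with respect to each component $a_{i}$ of $\tuple{a}$ (which lies in $\iota[\Ad]$ by construction) yields $a_{i} \ast x \vdash a_{i} \ast y$ for every $i$, that is, $\tuple{a} \ast x \vdash \tuple{a} \ast y$ componentwise. Applying the quantale-term observation to the tuples $\tuple{a} \ast x$ and $\tuple{a} \ast y$ then gives $t^{\Q}(\tuple{a} \ast x) \vdash t^{\Q}(\tuple{a} \ast y)$, and the module identity rewrites this as $a \ast x = t^{\A}(\tuple{a}) \ast x \vdash t^{\A}(\tuple{a}) \ast y = a \ast y$. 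Since $a$ was arbitrary, $\vdash$ is structural.

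I do not expect any genuine obstacle here; the only point demanding mild care is that (generalized) quantale terms may involve infinitely many variables, so the tuples $\tuple{a}$, $\tuple{a} \ast x$, $\tuple{a} \ast y$ are possibly infinite (ordinal-indexed). But both the representation $a = t^{\A}(\tuple{a})$ coming from distributive generation and the auxiliary observation on quantale terms are already formulated for arbitrary arities, so the argument goes through verbatim in that generality.
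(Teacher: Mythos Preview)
Your proposal is correct and follows essentially the same route as the paper's proof: both use distributive generation to write an arbitrary $a \in \A$ as $t^{\A}(\tuple{a})$ with $\tuple{a}$ in $\iota[\Ad]$, apply the assumed structurality componentwise to obtain $\tuple{a} \ast x \vdash \tuple{a} \ast y$, invoke the observation that $\vdash$ is preserved by (generalized) quantale terms, and then rewrite via the module identity $t^{\A}(\tuple{a}) \ast z = t^{\Q}(\tuple{a} \ast z)$. Your write-up is in fact slightly more explicit about the module identity and the possibly infinite arity of the tuples, but there is no substantive difference.
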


\begin{proof}
  This holds because for each generalized quantale term $t$ and all tuples ${\tuple{a} \in \Ad}$: if $x \vdash y$, then $\tuple{a} \ast x \vdash \tuple{a} \ast y$, so $t^{\Q}(\tuple{a} \ast x) \vdash t^{\Q}(\tuple{a} \ast y)$ and $t^{\Q}(\tuple{a}) \ast x = t^{\Q}(\tuple{a} \ast x) \vdash t^{\Q}(\tuple{a} \ast y) = t^{\Q}(\tuple{a}) \ast y$.
\end{proof}

  Finally, a nucleus $\gamma$ on an $\A$-module $\Q$ is said to be \emph{structural with respect to $a \in \A$} if for all $x \in \Q$
\begin{align*}
  a \ast \gamma(x) \leq \gamma(a \ast x).
\end{align*}
  A \emph{structural nucleus} on $\Q$ is a nucleus structural with respect to each $a \in \A$. Again, it suffices to verify structurality with respect to $\iota[\Ad]$, provided that $\A$ is distributively generated.

\begin{lemma} \label{lemma: distributively generated nucleus}
  If $\A$ is distributively generated and the nucleus $\gamma$ on an $\A$-module $\Q$ is structural with respect to each $a \in \iota[\Ad]$, then $\gamma$ is a structural nucleus.
\end{lemma}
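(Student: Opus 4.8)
The plan is to imitate the proofs of Lemmas~\ref{lemma: distributively generated congruence} and~\ref{lemma: distributively generated consequence}. Because $\A$ is distributively generated, every $a \in \A$ has the form $a = t^{\A}(\tuple{a})$ for some (generalized) quantale term $t$ and some tuple $\tuple{a}$ whose entries all lie in $\iota[\Ad]$. So it will suffice to show that for each such term $t$ and each tuple $\tuple{a}$ over $\iota[\Ad]$ the nucleus $\gamma$ is structural with respect to $t^{\A}(\tuple{a})$; since every element of $\A$ is of this shape, $\gamma$ will then be structural with respect to every element of $\A$.

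First I would isolate two ingredients. The first is that a (generalized) quantale term is built only from $\bigvee$, $+$, and $\0$, each of which is order-preserving in every argument, so the term operation $t^{\Q}$ is order-preserving in each of its arguments. The second is the extended nucleus inequality $t^{\Q}(\gamma(\tuple{y})) \leq \gamma(t^{\Q}(\tuple{y}))$, valid for every (generalized) quantale term $t$ and every tuple $\tuple{y}$ over $\Q$: the finitary part is the remark recorded right after the definition of a nucleus, and the passage to infinitary formal joins is taken care of by monotonicity of $\gamma$, since $\gamma(y_{i}) \leq \gamma(\bigvee_{j} y_{j})$ for each $i$ yields $\bigvee_{i} \gamma(y_{i}) \leq \gamma(\bigvee_{j} y_{j})$.

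With these in hand the argument is short. Fix $x \in \Q$ and a tuple $\tuple{a}$ over $\iota[\Ad]$. Structurality of $\gamma$ with respect to each entry of $\tuple{a}$ gives the componentwise inequality $\tuple{a} \ast \gamma(x) \leq \gamma(\tuple{a} \ast x)$, and then, using the module identity $t^{\A}(\tuple{a}) \ast z = t^{\Q}(\tuple{a} \ast z)$ at the two ends, monotonicity of $t^{\Q}$ for the first inequality, and the extended nucleus inequality for the second, one computes
\begin{align*}
  t^{\A}(\tuple{a}) \ast \gamma(x)
    = t^{\Q}(\tuple{a} \ast \gamma(x))
    &\leq t^{\Q}(\gamma(\tuple{a} \ast x)) \\
    &\leq \gamma(t^{\Q}(\tuple{a} \ast x))
    = \gamma(t^{\A}(\tuple{a}) \ast x),
\end{align*}
which is exactly structurality of $\gamma$ with respect to $t^{\A}(\tuple{a})$. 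There is no real obstacle here; the only point that warrants a second look is that the extended nucleus inequality must be known for \emph{infinitary} (generalized) quantale terms, i.e.\ for formal joins of possibly infinitely many additive monoidal terms in possibly infinitely many variables, and this is precisely what the monotonicity observation above delivers.
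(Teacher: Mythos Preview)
Your proof is correct and follows essentially the same approach as the paper. The only cosmetic difference is that the paper breaks the computation into two inductive pieces (closure under the function symbols $+,\0$ and closure under arbitrary joins), whereas you invoke the module identity $t^{\A}(\tuple{a})\ast x = t^{\Q}(\tuple{a}\ast x)$ and the extended nucleus inequality for the whole term $t$ at once; the underlying chain of inequalities is identical.
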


\begin{proof}
  If $\tuple{a} \ast \gamma(x) \leq \gamma(\tuple{a} \ast x)$, then for each function symbol $f$
\begin{align*}
  f^{\Q}(\tuple{a}) \ast \gamma(x) = f^{\Q}(\tuple{a} \ast \gamma(x)) \leq f^{\Q}(\gamma(\tuple{a} \ast x)) \leq \gamma(f^{\Q}(\tuple{a} \ast x)).
\end{align*}
  Likewise, if $a_{i} \ast x \leq \gamma(a_{i} \ast x)$ for each $a_{i}$ with $i \in I$, then we have
\begin{align*}
  \bigvee_{i \in I} a_{i} \ast \gamma(x) \leq \bigvee_{i \in I} \gamma(a_{i} \ast x) \leq \gamma \left( \bigvee_{i \in I} \gamma(a_{i} \ast x) \right) = \gamma \left( \bigvee_{i \in I} a_{i} \ast x \right),
\end{align*}
  so $a \ast \gamma(x) \leq \gamma (a \ast x)$ for $a \assign \bigvee_{i \in I} a_{i}$.
\end{proof}

\begin{lemma}
  Let $\gamma$ be a structural nucleus on an $\A$-module $\Q$. Then $\Q_{\gamma}$ is an $\A$-module with respect to the action
\begin{align*}
  a \ast_{\gamma} x \assign \gamma(a \ast x).
\end{align*}
\end{lemma}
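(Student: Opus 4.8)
The plan is to check directly that the map $\ast_{\gamma}\colon A \times \gamma[Q] \to \gamma[Q]$ defined by $a \ast_{\gamma} x \assign \gamma(a \ast x)$ is a module action on the (generalized) quantale $\Q_{\gamma}$, i.e.\ that it satisfies the list of equations characterizing $\A$-module actions given at the end of Section~\ref{sec: modules}. Well-definedness is immediate: $\gamma(a \ast x)$ is a fixpoint of $\gamma$ by idempotence, so it lies in $\gamma[Q]$. Order-preservation of $\ast_{\gamma}$ in both coordinates is equally immediate, since $\ast$ is order-preserving and $\gamma$ is monotone.

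The engine of the proof is a single observation: for a structural nucleus $\gamma$, every $a \in \A$, and every $y \in \Q$ one has $\gamma(a \ast \gamma(y)) = \gamma(a \ast y)$. Indeed, $\leq$ holds because $a \ast \gamma(y) \leq \gamma(a \ast y)$ by structurality, whence $\gamma(a \ast \gamma(y)) \leq \gamma(\gamma(a \ast y)) = \gamma(a \ast y)$ by idempotence; and $\geq$ holds because $y \leq \gamma(y)$ gives $a \ast y \leq a \ast \gamma(y)$ and hence $\gamma(a \ast y) \leq \gamma(a \ast \gamma(y))$. Alongside this I will use the companion fact, valid for any nucleus, that $\gamma(t^{\Q}(\gamma(\tuple{x}))) = \gamma(t^{\Q}(\tuple{x}))$ for every (generalized) quantale term $t$ --- this follows from the inequality $t^{\Q}(\gamma(\tuple{x})) \leq \gamma(t^{\Q}(\tuple{x}))$ recorded after the definition of a nucleus, together with expansiveness and idempotence --- and its consequence that the term operations of $\Q_{\gamma}$ are computed by $t^{\Q_{\gamma}}(\tuple{y}) = \gamma(t^{\Q}(\tuple{y}))$ for $\tuple{y} \in \gamma[Q]$, which is really just a restatement of the earlier identification of $\Q_{\gamma}$ with $\Q / \theta_{\gamma}$.

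Granting these, the remaining verifications are mechanical. The monoid-action laws hold because $\1 \ast_{\gamma} x = \gamma(\1 \ast x) = \gamma(x) = x$ on $\gamma[Q]$, and because $(a \cdot b) \ast_{\gamma} x = \gamma(a \ast (b \ast x))$ while $a \ast_{\gamma}(b \ast_{\gamma} x) = \gamma(a \ast \gamma(b \ast x))$, which agree by the engine observation with $y \assign b \ast x$. The first term law holds because $t^{\A}(\tuple{a}) \ast_{\gamma} x = \gamma(t^{\A}(\tuple{a}) \ast x) = \gamma(t^{\Q}(\tuple{a} \ast x))$ by the module law in $\Q$, whereas $t^{\Q_{\gamma}}(\tuple{a} \ast_{\gamma} x) = \gamma(t^{\Q}(\gamma(\tuple{a} \ast x))) = \gamma(t^{\Q}(\tuple{a} \ast x))$ by the companion fact. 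The second term law holds because $\iota(d) \ast_{\gamma} t^{\Q_{\gamma}}(\tuple{x}) = \gamma(\iota(d) \ast \gamma(t^{\Q}(\tuple{x}))) = \gamma(\iota(d) \ast t^{\Q}(\tuple{x})) = \gamma(t^{\Q}(\iota(d) \ast \tuple{x}))$ by the engine observation and then the module law in $\Q$, whereas $t^{\Q_{\gamma}}(\iota(d) \ast_{\gamma} \tuple{x}) = \gamma(t^{\Q}(\gamma(\iota(d) \ast \tuple{x}))) = \gamma(t^{\Q}(\iota(d) \ast \tuple{x}))$ by the companion fact.

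I expect the only slightly fiddly part to be the bookkeeping around the twisted operations of $\Q_{\gamma}$ --- isolating the identity $t^{\Q_{\gamma}}(\tuple{y}) = \gamma(t^{\Q}(\tuple{y}))$ and applying the absorption fact on the correct side of each equation --- after which everything collapses onto the module laws already available in $\Q$ together with the closure-operator properties of $\gamma$. Note in particular that distributive generation of $\A$ plays no role here: structurality of $\gamma$ with respect to every $a \in \A$ is part of the hypothesis, so we never need to reduce to $\iota[\Ad]$.
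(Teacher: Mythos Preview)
Your proof is correct and follows essentially the same route as the paper's: the paper verifies the same four conditions ($\1 \ast_{\gamma} x = x$, $(a \cdot b) \ast_{\gamma} x = a \ast_{\gamma} (b \ast_{\gamma} x)$, and the two term laws) via the same chains of equalities, using the identities $\gamma(a \ast \gamma(y)) = \gamma(a \ast y)$ and $\gamma(t^{\Q}(\gamma(\tuple{x}))) = \gamma(t^{\Q}(\tuple{x}))$ inline without naming them. Your version is slightly more explicit in isolating these two absorption facts and in noting that distributive generation is unnecessary here, but the argument is otherwise identical.
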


\begin{proof}
  For each term $t$ and each tuple $\tuple{a} \in \A$
\begin{align*}
  t^{\A}(\tuple{a}) \ast_{\gamma} x = \gamma(t^{\A}(\tuple{a}) \ast x) = \gamma(t^{\Q}(\tuple{a} \ast x)) = \gamma(t^{\Q}(\gamma(\tuple{a} \ast x))) = t^{\Q_{\gamma}}(\tuple{a} \ast_{\gamma} x),
\end{align*}
  and for each term $t$, $d \in \Ad$, and $\tuple{x} \in \Q_{\gamma}$
\begin{align*}
  \iota(d) \ast_{\gamma} t^{\Q_{\gamma}}(\tuple{x}) & =  \gamma(\iota(d) \ast t^{\Q_{\gamma}}(\tuple{x})) = \gamma(\iota(d) \ast \gamma(t^{\Q}(\tuple{x}))) = \gamma(\iota(d) \ast t^{\Q}(\tuple{x})) \\ & = \gamma(t^{\Q}(\iota(d) \ast \tuple{x})) = \gamma(t^{\Q}(\gamma(\iota(d) \ast \tuple{x}))) = t^{\Q_{\gamma}}(\iota(d) \ast_{\gamma} \tuple{x}).
\end{align*}
  Finally, for each $x \in \Q_{\gamma}$ we have $\1 \ast_{\gamma} x = \gamma(\1 \ast x) = \gamma (x) = x$ and
\begin{align*}
  & (a \cdot b) \ast_{\gamma} x = \gamma((a \cdot b) \ast x) = \gamma(a \ast (b \ast x)) = \gamma(a \ast \gamma(b \ast x)) = a \ast_{\gamma} (b \ast_{\gamma} x). \qedhere
\end{align*}
\end{proof}

  The above correspondence between congruences, additive consequence relations, and nuclei extends to a correspondence between structural congruences, structural additive consequence relations, and structural nuclei. To this end, it suffices to show that these isomorphisms preserve structurality in both directions.

\begin{theorem} \label{thm: nuclei and consequence}
  Let $\Q$ be an $\A$-module where $\A$ is distributively generated. Then:
\begin{enumerate}[(i)]
\item If a congruence $\theta$ on $\Q$ is structural, then so is $\vdash_{\theta}$.
\item If a nucleus $\gamma$ on $\Q$ is structural, then so is $\vdash_{\gamma}$.
\item If an additive consequence relation $\vdash$ on $\Q$ is structural, then so are $\theta_{\vdash}$ and~$\gamma_{\vdash}$.
\end{enumerate}
\end{theorem}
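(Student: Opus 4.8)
The plan is to prove each of the six implications directly from the defining formulas of the maps $\vdash \mapsto \theta_{\vdash}$, $\theta \mapsto {\vdash_{\theta}}$, $\vdash \mapsto \gamma_{\vdash}$, $\gamma \mapsto {\vdash_{\gamma}}$, using the isomorphisms recorded in Proposition~\ref{prop: nuclei and consequence} and the accompanying Facts so that we already know these are lattice isomorphisms. The feature that makes the computations go through cleanly is the hypothesis that $\A$ is distributively generated: by Lemmas~\ref{lemma: distributively generated congruence}, \ref{lemma: distributively generated consequence}, and~\ref{lemma: distributively generated nucleus}, in each case it suffices to check structurality with respect to the distributive elements $\iota(d)$ with $d \in \Ad$. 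This is exactly what we want, because such an element acts on $\Q$ by a map $x \mapsto \iota(d) \ast x$ that preserves non-empty joins, binary sums, and~$\0$, so it can be commuted past the join, the binary join, and the zero occurring in the definitions of $\gamma_{\vdash}$, $\vdash_{\theta}$, and the induced maps.

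The two implications that make essential use of joins are (i) (for $\vdash_{\theta}$) and the $\gamma_{\vdash}$-half of (iii). For (i): if $x \vdash_{\theta} y$, i.e.\ $\pair{x \vee y}{x} \in \theta$, then structurality of $\theta$ gives $\pair{\iota(d) \ast (x \vee y)}{\iota(d) \ast x} \in \theta$, and since $\iota(d) \ast (x \vee y) = (\iota(d) \ast x) \vee (\iota(d) \ast y)$ this says $\iota(d) \ast x \vdash_{\theta} \iota(d) \ast y$; Lemma~\ref{lemma: distributively generated consequence} then upgrades this to full structurality of $\vdash_{\theta}$. For $\gamma_{\vdash}$: recall $\gamma_{\vdash}(x) = \bigvee \set{y \in \Q}{x \vdash y}$, a non-empty join because $x \vdash x$ by reflexivity, so $\iota(d) \ast \gamma_{\vdash}(x) = \bigvee \set{\iota(d) \ast y}{x \vdash y}$; for each such $y$, structurality of $\vdash$ gives $\iota(d) \ast x \vdash \iota(d) \ast y$, whence $\iota(d) \ast y \leq \gamma_{\vdash}(\iota(d) \ast x)$, and taking the join yields $\iota(d) \ast \gamma_{\vdash}(x) \leq \gamma_{\vdash}(\iota(d) \ast x)$; Lemma~\ref{lemma: distributively generated nucleus} then promotes this to structurality of $\gamma_{\vdash}$ as a nucleus.

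The remaining implications are immediate and require no reduction. For (ii): if $x \vdash_{\gamma} y$, i.e.\ $y \leq \gamma(x)$, then for every $a \in \A$ the action is order-preserving and $\gamma$ is structural with respect to $a$, so $a \ast y \leq a \ast \gamma(x) \leq \gamma(a \ast x)$, i.e.\ $a \ast x \vdash_{\gamma} a \ast y$. For the $\theta_{\vdash}$-half of (iii): if $\pair{x}{y} \in \theta_{\vdash}$, i.e.\ $x \vdash y$ and $y \vdash x$, then structurality of $\vdash$ with respect to $a$ gives $a \ast x \vdash a \ast y$ and $a \ast y \vdash a \ast x$, so $\pair{a \ast x}{a \ast y} \in \theta_{\vdash}$. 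I do not expect any genuine obstacle here: the only subtlety — and the reason the distributive generation hypothesis appears — is that a generic $a \in \A$ need not act by a join- or sum-preserving map, so the join-flavoured arguments must be routed through the elements $\iota(d)$; beyond verifying that the relevant joins are non-empty (guaranteed by reflexivity), everything reduces to bookkeeping with the already-established correspondences and reduction lemmas.
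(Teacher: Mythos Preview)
Your proposal is correct and follows essentially the same route as the paper: reduce the join-sensitive cases (the $\vdash_{\theta}$ direction in (i) and the $\gamma_{\vdash}$ half of (iii)) to distributive elements via Lemmas~\ref{lemma: distributively generated congruence}--\ref{lemma: distributively generated nucleus}, and handle the remaining cases ((ii) and the $\theta_{\vdash}$ half of (iii)) directly for arbitrary $a \in \A$ using only monotonicity of the action. The paper's proof is organized in exactly this way, and your observation that the relevant joins are non-empty by reflexivity (needed in the generalized-quantale setting) is the one small point the paper leaves implicit.
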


\begin{proof}
  Given Lemmas~\ref{lemma: distributively generated congruence}, \ref{lemma: distributively generated consequence} and~\ref{lemma: distributively generated nucleus}, it suffices to show that these maps preserve structurality with respect to each $a \in \Ad$.

  If $\gamma$ is structural with respect to $a$, then $x \leq \gamma(y)$ implies $a \ast x \leq a \ast \gamma(y) \leq \gamma(a \ast y)$, so $\vdash_{\gamma}$ is structural with respect to $a$. Conversely, if $\vdash$ is structural with respect to $a$, then 
\begin{align*}
  a \ast \bigvee \set{y \in \Q}{x \vdash y} = \bigvee \set{a \ast y}{x \vdash y} \leq \bigvee \set{z}{a \ast x \vdash z},
\end{align*}
  where the inequality holds because $x \vdash y$ implies $a \ast x \vdash a \ast y$, so for each $y \in \Q$ we can take $z \assign a \ast y$. Thus $a \ast \gamma_{\vdash} (x) \leq \gamma_{\vdash}(a \ast x)$.

  If $\theta$ is structural with respect to $a$, then $x \vdash_{\theta} y$ implies $\pair{x \vee y}{x} \in \theta$, so $\pair{(a \ast x) \vee (a \ast y)}{a \ast x} = \pair{a \ast (x \vee y)}{a \ast x} \in \theta$ and $a \ast x \vdash_{\theta} a \ast y$. Conversely, if $\vdash$ is structural with respect to $a$, then $\pair{x}{y} \in \theta_{\vdash}$ implies $x \vdash y \vdash x$, so $a \ast x \vdash a \ast y \vdash a \ast x$ and $\pair{a \ast x}{a \ast y} \in \theta_{\vdash}$.
\end{proof}

\section{Cyclic projective modules}
\label{sec: projective}

  The goal of this section is to extend the description of the cyclic projective $\A$-modules in~\cite[Theorem~5.7]{GT} beyond the idempotent case treated in~\cite{GT}.

\begin{definition}
  The $\A$-module $\P$ is \emph{projective} if for each surjective homomorphism of $\A$-modules ${g\colon \Q \onto \R}$ every homomorphism of $\A$-modules $h\colon \P \to \R$ lifts to some homomorphism $h^{\sharp}\colon \P \to \Q$ such that $h = g \circ h^{\sharp}$:
\[
\begin{tikzcd}
 & & \Q \arrow[d,"g",->>] \\
 & \P \arrow[r,"h"] \arrow[ur,"h^{\sharp}",dashed] & \R
\end{tikzcd}
\]
\end{definition}

  Throughout this section, $\A$ will be a \emph{distributively generated} (generalized) additive quantale with multiplication and $\Q$ is an $\A$-module. Elements of $\A$ will be denoted by $a, b, c$, while elements of $\Q$ will be denoted by $x, y, z$ or $u, v, w$.

\begin{definition}
  Given a pomonoid $\alg{M}$, an $\alg{M}$-poset $X$, and an element $u \in X$, let
\begin{align*}
  M \ast u \assign \set{a \ast u \in X}{a \in \alg{M}}.
\end{align*}
  An $\alg{M}$-poset $X$ is \emph{cyclic}, or more explicitly \emph{$u$-cyclic}, if $X = M \ast u$ for some $u \in X$. An $\alg{M}$-act $\P$ is \emph{cyclic}, or more explicitly \emph{$u$-cyclic}, if $\P$ is generated as a (generalized) quantale by $M \ast u$ for some $u \in \P$. An $\A$-module $\Q$ is \emph{cyclic}, or more explicitly \emph{$u$-cyclic}, if $Q = A \ast u$ for some $u \in \Q$.
\end{definition}

  Given $u \in \Q$, the set $A \ast u$ forms a submodule $\A \ast u$ of $\Q$. Clearly $\Q$ is $u$-cyclic if and only if $\Q = \A \ast u$.

  If $\A$ is a quantale, then the (left) action of $\A$ on $\Q$ has a \emph{residual}. That is, for each $x, y \in \Q$ there is some $y \sast x \in \A$ such that for all $a \in \A$
\begin{align*}
  a \ast x \leq_{\Q} y \iff a \leq_{\A} y \sast x,
\end{align*}
  namely $y \sast x \assign \bigvee \set{b}{b \ast x \leq_{\Q} y}$. If $\A$ is only a \emph{generalized} quantale, the element $y \sast x$ exists if and only if there is some $a \in \A$ such that $a \ast x \leq y$.

\begin{definition}
  An element $u \in \Q$ is a \emph{dividing element} if $x \sast u$ exists for each $x \in \Q$, i.e.\ if for each $y \in \Q$ there is some $a \in \A$ such that $a \ast u \leq y$.
\end{definition}

  Clearly if $\Q$ is a dually integral generalized quantale, then each element of $\Q$ is a dividing element, since $\0 \ast x = \0 \leq y$ for each $x, y \in \Q$.

\begin{lemma}
  $\Q$ is $u$-cyclic if and only if $u$ is a dividing element and $(x \sast u) \ast u = x$ for each $x \in \Q$.
\end{lemma}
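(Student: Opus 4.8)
The statement is a biconditional, so I would prove the two directions separately.

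For the easier direction, suppose $u$ is a dividing element and $(x \sast u) \ast u = x$ for each $x \in \Q$. Then trivially every $x \in \Q$ has the form $x = a \ast u$ with $a \assign x \sast u \in \A$, so $Q = A \ast u$ and $\Q$ is $u$-cyclic by definition. (Here I am using that $x \sast u$ exists for each $x$, which is exactly what it means for $u$ to be a dividing element; this guarantees the residual notation makes sense.)

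For the converse, suppose $\Q$ is $u$-cyclic, i.e.\ $Q = A \ast u$. Fix $x \in \Q$. By cyclicity there is some $a \in \A$ with $a \ast u = x$; in particular $a \ast u \leq x$, so the residual $x \sast u = \bigvee \set{b \in \A}{b \ast u \leq x}$ exists (it is a join over a non-empty family, which is legitimate even when $\A$ is only a generalized quantale). Thus $u$ is a dividing element. It remains to show $(x \sast u) \ast u = x$. Writing $c \assign x \sast u$, on the one hand $a \leq c$ since $a \ast u \leq x$, hence $x = a \ast u \leq c \ast u$ by monotonicity of the action. On the other hand, $c \ast u = \left( \bigvee \set{b}{b \ast u \leq x} \right) \ast u = \bigvee \set{b \ast u}{b \ast u \leq x} \leq x$, using that the action preserves non-empty joins in the first coordinate. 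Combining the two inequalities gives $(x \sast u) \ast u = x$, as required.

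I do not expect a genuine obstacle here: the only subtlety is the bookkeeping about when residuals exist in a generalized (as opposed to genuine) quantale, which the text has already isolated in the definition of a dividing element — once cyclicity supplies a witness $a$ with $a \ast u \leq x$, the relevant join is non-empty and everything goes through. The key ingredients are simply the definition of the residual $y \sast x$, the monotonicity of $\ast$ in both arguments, and the preservation of non-empty joins by $\ast$ in the first coordinate, all of which are available from the module axioms stated earlier in the excerpt.
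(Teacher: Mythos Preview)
Your proposal is correct and follows essentially the same route as the paper: the paper likewise treats the right-to-left direction as immediate, observes that $u$-cyclicity makes $u$ a dividing element, and then derives $(x \sast u) \ast u = x$ from the two standard residual inequalities $a \leq (a \ast u) \sast u$ and $(x \sast u) \ast u \leq x$ combined with $x = a \ast u$. Your argument simply unpacks those two inequalities more explicitly (via monotonicity and join-preservation), which is fine.
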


\begin{proof}
  The right-to-left implication is immediate, as is the fact that if $\Q$ is $u$-cyclic, then $u$ is a dividing element. Finally, if $u$ is a dividing element, then $(x \sast u) \ast u \leq x$ and $a \leq (a \ast u) \sast u$ for each $a \in \A$ and $x \in \Q$, so $x = a \ast u$ implies that $x = a \ast u \leq ((a \ast u) \sast u) \ast u = (x \sast u) \ast u \leq x$ and $(x \sast u) \ast u = x$.
\end{proof}

  The (generalized) additive quantale with multiplication $\A$ may itself be viewed as an $\A$-module if we take $a \ast x \assign a \cdot x$.

\begin{lemma}
  Let $\gamma$ be a structural nucleus on the $\A$-module $\A$. Then $\A_{\gamma}$ is a cyclic $\A$-module with a cyclic generator $\gamma(\1)$.
\end{lemma}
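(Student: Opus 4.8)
The plan is to first invoke the preceding lemma, which already establishes that $\A_{\gamma}$ carries an $\A$-module structure with action $a \ast_{\gamma} x = \gamma(a \cdot x)$. It then only remains to verify that $\gamma(\1)$ is a cyclic generator, i.e.\ that $A_{\gamma} = A \ast_{\gamma} \gamma(\1)$, where $A_{\gamma} = \gamma[A]$ is the underlying set of $\A_{\gamma}$.

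The inclusion $A \ast_{\gamma} \gamma(\1) \subseteq A_{\gamma}$ is immediate, since $b \ast_{\gamma} \gamma(\1) = \gamma(b \cdot \gamma(\1)) \in \gamma[A]$ for every $b \in \A$. For the converse, I would take an arbitrary element $\gamma(b) \in \gamma[A]$ and show $\gamma(b) = b \ast_{\gamma} \gamma(\1) = \gamma(b \cdot \gamma(\1))$. One inequality comes from expansivity and monotonicity: since $\1 \leq \gamma(\1)$ and multiplication is order-preserving, $b = b \cdot \1 \leq b \cdot \gamma(\1)$, and applying the monotone map $\gamma$ gives $\gamma(b) \leq \gamma(b \cdot \gamma(\1))$. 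The reverse inequality is where structurality of $\gamma$ is used: instantiating the structurality condition $a \ast \gamma(x) \leq \gamma(a \ast x)$ at $a \assign b$ and $x \assign \1$ yields $b \cdot \gamma(\1) \leq \gamma(b \cdot \1) = \gamma(b)$, and then applying $\gamma$ together with idempotency gives $\gamma(b \cdot \gamma(\1)) \leq \gamma(\gamma(b)) = \gamma(b)$. Combining the two inequalities yields $\gamma(b) = b \ast_{\gamma} \gamma(\1)$, so every element of $A_{\gamma}$ lies in $A \ast_{\gamma} \gamma(\1)$.

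The argument is essentially routine; the only point requiring a moment's thought is the choice to instantiate the structurality inequality precisely at the scalar $b$ itself (with $x \assign \1$), which is exactly what collapses the extra application of $\gamma$ in $b \ast_{\gamma} \gamma(\1) = \gamma(b \cdot \gamma(\1))$ back down to $\gamma(b)$. No further obstacle is anticipated.
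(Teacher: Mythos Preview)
Your proof is correct. The paper takes a slightly more conceptual route: it observes that the surjective image of a $u$-cyclic module under a module homomorphism $h$ is $h(u)$-cyclic, that $\A$ itself is $\1$-cyclic (since $a = a \cdot \1$), and that $\gamma\colon \A \to \A_{\gamma}$ is a surjective module homomorphism; hence $\A_{\gamma}$ is $\gamma(\1)$-cyclic. Your argument instead verifies directly that $\gamma(b) = b \ast_{\gamma} \gamma(\1)$ by computing $\gamma(b \cdot \gamma(\1)) = \gamma(b)$ via expansivity, monotonicity, structurality, and idempotency. The two are closely related --- the paper's implicit claim that $\gamma$ is a module homomorphism, i.e.\ that $\gamma(b \cdot x) = \gamma(b \cdot \gamma(x))$, unwinds to exactly your computation --- but the paper's framing is more reusable (the preservation of cyclic generators under surjections is a general principle), whereas yours is more self-contained and avoids invoking that $\gamma$ is a module homomorphism as a separate fact.
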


\begin{proof}
  The homomorphic image of a $u$-cyclic $\A$-module with respect to a surjective homomorphism $h$ is an $h(u)$-cyclic $\A$-module. But $\A$ is $\1$-cyclic and $\gamma$ is a surjective homomorphism of modules from $\A$ onto $\A_{\gamma}$.
\end{proof}

\begin{lemma} \label{lemma: gamma u}
  Let $\Q$ be an $\A$-module with a dividing element $u \in \Q$. Then:
\begin{enumerate}[(i)]
\item The map $\gamma_{u}\colon a \mapsto (a \ast u) \sast u$ is a structural nucleus on $\A$.
\item $\A \ast u$ is isomorphic to $\A_{\gamma_{u}}$ via the maps $x \mapsto x \sast u$ and $a \mapsto a \ast u$.
\end{enumerate}
  In particular, each $u$-cyclic $\A$-module $\Q$ is isomorphic to $\A_{\gamma_{u}}$.
\end{lemma}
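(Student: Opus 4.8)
The plan is to recognize $\gamma_{u}$ as the closure operator attached to a Galois connection. First I would observe that the maps $f\colon \A \to \Q$, $f(a) \assign a \ast u$, and $g\colon \Q \to \A$, $g(x) \assign x \sast u$, form an adjunction: since $u$ is a dividing element, $x \sast u$ exists for every $x \in \Q$, and by definition of the residual $f(a) \leq x$ if and only if $a \leq g(x)$. Hence $\gamma_{u} = g \circ f$ is automatically order-preserving, expansive, and idempotent. The key identity I would extract and use throughout is obtained by instantiating the residuation law $c \ast u \leq a \ast u \iff c \leq \gamma_{u}(a)$ at $c \assign \gamma_{u}(a)$, which gives $\gamma_{u}(a) \ast u \leq a \ast u$; combined with expansiveness $a \leq \gamma_{u}(a)$ (whence $a \ast u \leq \gamma_{u}(a) \ast u$) this yields
\begin{align*}
  \gamma_{u}(a) \ast u = a \ast u \qquad \text{for all } a \in \A.
\end{align*}

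To finish part~(i) I would then verify the nucleus inequality and structurality. For the inequality, apply ${-} \ast u$, use that the action distributes over $+$ in the first coordinate, and apply the key identity: $(\gamma_{u}(a) + \gamma_{u}(b)) \ast u = a \ast u + b \ast u = (a+b) \ast u$, then residuate to get $\gamma_{u}(a) + \gamma_{u}(b) \leq \gamma_{u}(a+b)$. For structurality with respect to an arbitrary $b \in \A$ (recalling that $\A$ acts on itself by multiplication), compute $(b \cdot \gamma_{u}(a)) \ast u = b \ast (\gamma_{u}(a) \ast u) \leq b \ast (a \ast u) = (b \cdot a) \ast u$, using $\gamma_{u}(a) \ast u \leq a \ast u$ and that $b \ast {-}$ is order-preserving, and then residuate to get $b \cdot \gamma_{u}(a) \leq \gamma_{u}(b \cdot a)$; since this holds for every $b$, Lemma~\ref{lemma: distributively generated nucleus} is not even needed.

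For part~(ii) I would introduce $\psi\colon \A_{\gamma_{u}} \to \A \ast u$, $\psi(a) \assign a \ast u$, and $\phi\colon \A \ast u \to \A_{\gamma_{u}}$, $\phi(x) \assign x \sast u$; note $\phi(a \ast u) = \gamma_{u}(a) \in \gamma_{u}[A]$, so $\phi$ is well-defined. These are mutually inverse: $\phi(\psi(a)) = \gamma_{u}(a) = a$ for $a \in \gamma_{u}[A]$ by idempotence, and $\psi(\phi(a \ast u)) = \gamma_{u}(a) \ast u = a \ast u$ by the key identity. It then suffices to show that $\psi$ is a homomorphism of $\A$-modules, since the inverse of a bijective homomorphism is again one. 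Here the key identity collapses each requirement to a module axiom of $\Q$: $\psi(\bigvee_{\gamma_{u}} X) = \gamma_{u}(\bigvee X) \ast u = (\bigvee X) \ast u = \bigvee \psi[X]$ for joins; $\psi(\gamma_{u}(a+b)) = (a+b) \ast u = a \ast u + b \ast u$ for addition; $\psi(\gamma_{u}(\0)) = \0 \ast u = \0$ for the constant; and $\psi(\gamma_{u}(b \cdot a)) = (b \cdot a) \ast u = b \ast (a \ast u) = b \ast \psi(a)$ for the action. (All joins and sums agree whether computed in $\Q$ or in the submodule $\A \ast u$.) Hence $\psi$ is an isomorphism $\A_{\gamma_{u}} \iso \A \ast u$. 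Finally, for the last clause: if $\Q$ is $u$-cyclic then $Q = A \ast u$, so $\Q$ coincides with the submodule $\A \ast u$ and $u$ is a dividing element of $\Q$, whence $\Q = \A \ast u \iso \A_{\gamma_{u}}$.

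I expect the only genuinely substantive point to be the key identity $\gamma_{u}(a) \ast u = a \ast u$; once it is available, conditions (i)--(iii) of a nucleus come from the adjunction and everything else is short manipulation with the module axioms. The only thing to be careful about is notational hygiene: keeping the operations of $\A$, $\Q$, and $\A_{\gamma_{u}}$ apart and invoking that the operations of the submodule $\A \ast u$ are the restrictions of those of $\Q$.
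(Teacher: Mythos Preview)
Your proof is correct and follows essentially the same route as the paper: both rely on the identity $\gamma_{u}(a) \ast u = a \ast u$ (which the paper uses in the form $((\tuple{a} \ast u) \sast u) \ast u = \tuple{a} \ast u$) to verify the nucleus and structurality conditions and to check that the two maps are mutually inverse homomorphisms. Your explicit packaging via the Galois connection $f \dashv g$ is a cosmetic improvement over the paper's ``straightforward'' for the closure-operator axioms, and you verify that $\psi$ is a module homomorphism whereas the paper verifies $\phi$ instead, but the underlying computations are the same.
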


\begin{proof}
  Proving that $\gamma_{u}$ is a closure operator is straightforward, and $a \cdot \gamma_{u} (b) = a \cdot ((b \ast u) \sast u) \leq (a \ast (b \ast u)) \sast u = ((a \cdot b) \ast u) \sast u = \gamma_{u}(a \cdot b)$. Moreover, for each function symbol $f$ and $\tuple{a} \in \A$ we have $f^{\A}(\gamma_{u}(\tuple{a})) = f^{\A}((\tuple{a} \ast u) \sast u) \leq (f^{\A}(\tuple{a}) \ast u) \sast u = \gamma_{u}(f^{\A}(\tuple{a}))$, since $f^{\A}((\tuple{a} \ast u) \sast u) \ast u = f^{\A}(((\tuple{a} \ast u) \sast u) \ast u) = f^{\A}(\tuple{a} \ast u) = f^{\A}(\tuple{a}) \ast u$. Thus $\gamma_{u}$ is a structural nucleus.

  The two maps in (ii) are homomorphisms of (generalized) quantales: for each (generalized) quantale term $t$ and $\tuple{x} \assign \tuple{a} \ast u$ we have $t^{\A_{\gamma_{u}}}(\tuple{x} \sast u) = \gamma_{u}(t^{\A}(\tuple{x} \sast u)) = (t^{\A}((\tuple{a} \ast u) \sast u) \ast u) \sast u = t^{\Q}(((\tuple{a} \ast u) \sast u) \ast u) \sast u = t^{\Q}(\tuple{a} \ast u) \sast u = t^{\Q}(\tuple{x}) \sast u$. They are mutually inverse maps: for $x = a \ast u$ we have $(x \sast u) \ast u = ((a \ast u) \sast u) \ast u = a \ast u = x$, and for $a = \gamma_{u}(b)$ we have $(a \ast u) \sast u = (((b \ast u) \sast u) \ast u) \sast u = (b \ast u) \sast u = \gamma_{u}(b) = a$. Finally, they are compatible with the action: for $a \assign \gamma_{u}(b) = (b \ast u) \sast u$ we have $(a \ast u) \sast u = (((b \ast u) \sast u) \ast u) \sast u = (b \ast u) \sast u = a$.
\end{proof}

  The last two lemmas yield the following description of cyclic $\A$-modules.

\begin{theorem} \label{thm: cyclic modules}
  An $\A$-module $\Q$ is cyclic if and only if it is isomorphic to an $\A$-module of the form $\A_{\gamma}$ for some structural nucleus $\gamma$ on $\A$.
\end{theorem}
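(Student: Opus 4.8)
The plan is to read the statement off directly from the two lemmas immediately preceding it, treating the two directions of the biconditional separately.

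For the direction from right to left, suppose $\Q$ is isomorphic, as an $\A$-module, to $\A_{\gamma}$ for some structural nucleus $\gamma$ on the $\A$-module $\A$. The lemma just above Lemma~\ref{lemma: gamma u} already states that $\A_{\gamma}$ is a cyclic $\A$-module, with $\gamma(\1)$ as a cyclic generator. So it only remains to note that cyclicity is preserved by isomorphisms of $\A$-modules: if $h\colon \A_{\gamma} \to \Q$ is such an isomorphism and $\gamma[A] = A \ast_{\gamma} \gamma(\1)$ is the carrier of $\A_{\gamma}$, then, since $h$ is onto and commutes with the action, $Q = h[A \ast_{\gamma} \gamma(\1)] = A \ast h(\gamma(\1))$, so that $\Q$ is $h(\gamma(\1))$-cyclic. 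I would record this one-line observation explicitly, as it is the only step not already packaged in a cited lemma.

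For the direction from left to right, suppose $\Q$ is $u$-cyclic, i.e.\ $Q = A \ast u$. First I would check that $u$ is a dividing element: given $y \in \Q$, there is by hypothesis some $a \in \A$ with $y = a \ast u$, hence $a \ast u \leq y$, so $y \sast u$ exists. Lemma~\ref{lemma: gamma u} then applies and gives at once that $\gamma_{u}\colon a \mapsto (a \ast u) \sast u$ is a structural nucleus on $\A$ and that $\Q = \A \ast u$ is isomorphic to $\A_{\gamma_{u}}$ as an $\A$-module. Taking $\gamma \assign \gamma_{u}$ exhibits $\Q$ in the required form.

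Since each half is an immediate appeal to a result already established, there is no real obstacle here; the only point requiring any argument at all is the routine remark, made above, that being cyclic is an isomorphism-invariant property of $\A$-modules.
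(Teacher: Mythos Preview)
Your proof is correct and matches the paper's approach exactly: the paper gives no explicit proof, simply stating that the result follows from the two lemmas immediately preceding it, and your argument unpacks precisely this. The only additions you make---that cyclicity is preserved under module isomorphism, and that a cyclic generator is automatically dividing---are routine observations already implicit in the paper's setup (the latter is in fact part of the lemma two above Lemma~\ref{lemma: gamma u}).
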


  Cyclic projective $\A$-modules also admit a description analogous to that of~\cite{GT}.

\begin{lemma} \label{lemma: gamma equals gamma u}
  Let $\gamma$ be a structural nucleus on $\A$ and let $u \in \A$ be a dividing element. Then the following are equivalent:
\begin{enumerate}[(i)]
\item $\gamma = \gamma_{u}$ and $u \cdot u = u$.
\item $\gamma(u) = \gamma(\1)$ and $\gamma(a) \cdot u = a \cdot u$.
\end{enumerate}
\end{lemma}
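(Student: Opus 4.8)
The plan is to prove the two implications directly. The only external ingredients are the residuation of the left action of $\A$ on itself --- available because $u$ is a dividing element, so that $y \sast u$ is defined for every $y \in \A$ --- together with the fact that $\gamma$ is a closure operator which is structural, i.e. $a \cdot \gamma(x) \leq \gamma(a \cdot x)$, on the $\A$-module $\A$. Two bookkeeping facts will be used throughout: first, $a \leq (a \cdot u) \sast u$ (an instance of the residuation law), and consequently $\gamma_{u}(a) \cdot u = ((a\cdot u)\sast u)\cdot u = a \cdot u$ (the inequality $\leq$ is the counit of the residuation, the inequality $\geq$ is obtained by multiplying $a \leq (a\cdot u)\sast u$ on the right by $u$); second, $\gamma(\1) \geq \1$ by expansiveness and, since we work with a distributively generated $\A$, multiplication is order-preserving in both arguments.

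For (i) $\Rightarrow$ (ii): assume $\gamma = \gamma_{u}$ and $u \cdot u = u$. The equation $\gamma(a) \cdot u = a \cdot u$ is precisely the identity $\gamma_{u}(a) \cdot u = a \cdot u$ noted above. For $\gamma(u) = \gamma(\1)$ one simply computes $\gamma(u) = (u \cdot u)\sast u = u \sast u = (\1 \cdot u)\sast u = \gamma(\1)$, the middle step using idempotence of $u$.

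For (ii) $\Rightarrow$ (i): first derive $u \cdot u = u$ by instantiating $\gamma(a)\cdot u = a\cdot u$ at $a = u$ and at $a = \1$ and invoking $\gamma(u) = \gamma(\1)$, which yields $u\cdot u = \gamma(u)\cdot u = \gamma(\1)\cdot u = \1\cdot u = u$. It then remains to show $\gamma = \gamma_{u}$, and for this I would prove the equivalence $b \leq \gamma(c) \iff b\cdot u \leq c\cdot u$ for all $b, c \in \A$; since $(c\cdot u)\sast u$ is by definition the greatest $b$ with $b\cdot u \leq c\cdot u$ while $\gamma(c)$ is the greatest $b$ with $b \leq \gamma(c)$, this equivalence forces $\gamma(c) = (c\cdot u)\sast u = \gamma_{u}(c)$. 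The forward direction of the equivalence is immediate from hypothesis (ii): $b \leq \gamma(c)$ gives $b\cdot u \leq \gamma(c)\cdot u = c\cdot u$. The backward direction is where the work lies: expansiveness together with $\gamma(u) = \gamma(\1)$ gives $u \leq \gamma(u) = \gamma(\1)$, so structurality of $\gamma$ yields, for every $b$, that $b = b\cdot\1 \leq b\cdot\gamma(\1) = b\cdot\gamma(u) \leq \gamma(b\cdot u)$, and likewise $c\cdot u \leq c\cdot\gamma(\1) \leq \gamma(c)$; hence $b\cdot u \leq c\cdot u$ implies $b \leq \gamma(b\cdot u) \leq \gamma(c\cdot u) \leq \gamma(\gamma(c)) = \gamma(c)$, as required.

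I expect the one genuinely non-mechanical point to be in the direction (ii) $\Rightarrow$ (i): everything there hinges on spotting that $\gamma(u) = \gamma(\1)$ combined with the expansiveness of $\gamma$ forces $u \leq \gamma(\1)$, which is exactly what makes structurality of $\gamma$ strong enough to connect $\gamma$ with the residual-based nucleus $\gamma_{u}$. Once that observation is made, the rest is routine manipulation with residuation and the idempotence of $\gamma$.
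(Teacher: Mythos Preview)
Your proof is correct and is essentially the argument of \cite[Lemma~5.6]{GT} to which the paper defers; the one point that genuinely requires care in the present setting is that the step $b = b\cdot\1 \leq b\cdot\gamma(\1)$ uses monotonicity of multiplication in the second variable, which in this paper is not an axiom but follows from the standing assumption that $\A$ is distributively generated --- and you flag exactly this.
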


\begin{proof}
  The proof of Lemma~5.6 of~\cite{GT} carries over word for word.
\end{proof}

\begin{theorem} \label{thm: cyclic projective modules}
  The following are equivalent:
\begin{enumerate}[(i)]
\item $\Q$ is a cyclic projective $\A$-module.
\item $\Q$ is isomorphic to $\A \cdot u \iso \A_{\gamma_{u}}$ for some dividing idempotent $u \in \A$.
\item $\Q$ is $v$-cyclic for some $v \in \Q$ and $\gamma_{v} = \gamma_{u}$ for some dividing idempotent $u \in \A$.
\item $\Q$ is $v$-cyclic for some $v \in \Q$ and there is some dividing element $u \in \A$ such that $\gamma_{v}(u) = \gamma_{v}(\1)$ and $\gamma_{v}(a) \cdot u = a \cdot u$ for all $a \in \A$.
\item $\Q$ is $v$-cyclic for some $v \in \Q$ and there is some dividing element $u \in \A$ such that $u \ast v = v$ and $((a \ast v) \sast v) \cdot u = a \cdot u$ for all $a \in \A$.
\end{enumerate}
  Moreover, the elements $u \in \A$ and $v \in \Q$ can be taken to be the same in all conditions in which they appear.
\end{theorem}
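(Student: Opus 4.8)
The strategy is to transport to the quantale setting the classical fact that a direct summand of a free module is projective, and to combine it with the dictionary between structural nuclei on $\A$ and cyclic $\A$-modules furnished by Lemmas~\ref{lemma: gamma u} and~\ref{lemma: gamma equals gamma u}. Concretely, I would prove $(\mathrm{ii}) \Rightarrow (\mathrm{i})$ and $(\mathrm{i}) \Rightarrow (\mathrm{v})$ by hand, and then close the cycle through the chain $(\mathrm{v}) \Leftrightarrow (\mathrm{iv}) \Leftrightarrow (\mathrm{iii}) \Leftrightarrow (\mathrm{ii})$, which amounts to bookkeeping on top of the two cited lemmas.

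Two observations set everything up. First, viewing $\A$ as an $\A$-module over itself with action $a \ast x \assign a \cdot x$, every endomorphism $e$ of this module is right multiplication by $u \assign e(\1)$, since $e(a) = e(a \cdot \1) = a \cdot e(\1)$; moreover $e$ is idempotent exactly when $u$ is an idempotent element of $\A$. Second, $\A$ is a $\1$-cyclic projective $\A$-module: for any $\A$-module $\Q$ and any $q \in \Q$ the assignment $a \mapsto a \ast q$ is the unique homomorphism of $\A$-modules $\A \to \Q$ sending $\1$ to $q$ (one checks the module-term identities directly), so $\A$ is the free $\A$-module on one generator, and free modules are projective by a one-line diagram chase. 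Consequently, if $u \in \A$ is idempotent then right multiplication by $u$ is an idempotent endomorphism of the module $\A$ whose image is the submodule $\A \cdot u = \A \ast u$; this exhibits $\A \cdot u$ as a retract of $\A$, the retraction being right multiplication by $u$ (which restricts to the identity on $\A \cdot u$ since $u^2 = u$) and the coretraction the inclusion. Since retracts of projective modules are projective, $\A \cdot u$ is cyclic projective, and if in addition $u$ is a dividing element then $\A \cdot u \iso \A_{\gamma_u}$ by Lemma~\ref{lemma: gamma u}. This is $(\mathrm{ii}) \Rightarrow (\mathrm{i})$.

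For $(\mathrm{i}) \Rightarrow (\mathrm{v})$, let $\Q$ be cyclic projective, say $v$-cyclic. By the characterization of cyclic modules, $v$ is a dividing element of $\Q$ and $g \colon \A \onto \Q$, $g(a) \assign a \ast v$, is a surjective homomorphism of $\A$-modules. Projectivity of $\Q$ lifts $\id_{\Q}$ through $g$ to a homomorphism $h \colon \Q \to \A$ with $g \circ h = \id_{\Q}$; set $u \assign h(v)$. Applying $g$ gives $u \ast v = g(h(v)) = v$. Since $h$ is a module homomorphism, $h(a \ast v) = a \cdot u$ for all $a \in \A$, while $v$-cyclicity gives $a \ast v = ((a \ast v) \sast v) \ast v$, so $a \cdot u = h(a \ast v) = ((a \ast v) \sast v) \cdot u$. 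Finally $h \circ g$ is an idempotent endomorphism of the module $\A$ with $(h \circ g)(\1) = h(v) = u$, so by the first observation $u$ is idempotent and $\A \cdot u = \mathrm{im}(h \circ g) = h[\Q] \iso \Q$; transporting the cyclic generator of $\Q$ along this isomorphism shows $u$ generates $\A \cdot u$. The one remaining point — and the place where the generalization beyond the quantale setting of~\cite{GT} (in which every element of $\A$ is dividing, because $\A$ has a least element $\bot$ with $\bot \cdot a = \bot$) genuinely bites — is to verify that $u$ is a dividing element of $\A$ itself. I expect this to be the main obstacle, and would attack it by unwinding the residual identities above together with the fact that $v$ is a dividing element of $\Q$ and that $h$ preserves the bottom clause $\0^{\Q} \mapsto \0^{\A}$.

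It remains to run the chain $(\mathrm{v}) \Leftrightarrow (\mathrm{iv}) \Leftrightarrow (\mathrm{iii}) \Leftrightarrow (\mathrm{ii})$. The equivalence $(\mathrm{iv}) \Leftrightarrow (\mathrm{v})$ is pure unwinding of $\gamma_v(a) = (a \ast v) \sast v$: the clause $\gamma_v(a) \cdot u = a \cdot u$ is the second clause of $(\mathrm{v})$ verbatim, and $\gamma_v(u) = \gamma_v(\1)$ reads $(u \ast v) \sast v = v \sast v$, which after applying $(-) \ast v$ and using $v$-cyclicity in the form $(x \sast v) \ast v = x$ is equivalent to $u \ast v = v$. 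For $(\mathrm{iii}) \Leftrightarrow (\mathrm{iv})$ one invokes Lemma~\ref{lemma: gamma equals gamma u} with $\gamma \assign \gamma_v$, which is a structural nucleus on $\A$ by Lemma~\ref{lemma: gamma u} since $v$ is a dividing element of $\Q$: its two conditions are exactly the remaining clauses of $(\mathrm{iii})$ and $(\mathrm{iv})$, and it also supplies the idempotence of $u$ stated explicitly in $(\mathrm{iii})$. Finally $(\mathrm{iii}) \Leftrightarrow (\mathrm{ii})$: given $(\mathrm{iii})$, two applications of Lemma~\ref{lemma: gamma u} give $\Q \iso \A_{\gamma_v} = \A_{\gamma_u} \iso \A \cdot u$; given $(\mathrm{ii})$, the module $\A \cdot u = \A \ast u$ is $u$-cyclic, and transporting its cyclic generator to $\Q$ along the isomorphism produces $v \in \Q$ with $\Q$ being $v$-cyclic and $\gamma_v = \gamma_u$, because $\gamma_w$ depends only on the action of $\A$ on the cyclic generator $w$, an order-isomorphism invariant. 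The ``moreover'' clause is then automatic, since in each implication the elements $u$ and $v$ produced are carried over unchanged from the previous condition.
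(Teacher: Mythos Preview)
Your approach is essentially the paper's. The chain $(\mathrm{ii}) \Leftrightarrow (\mathrm{iii}) \Leftrightarrow (\mathrm{iv}) \Leftrightarrow (\mathrm{v})$ via Lemmas~\ref{lemma: gamma u} and~\ref{lemma: gamma equals gamma u} is exactly what the paper does; your retract-of-projective argument for $(\mathrm{ii}) \Rightarrow (\mathrm{i})$ and the paper's explicit construction of the lift $g(a \cdot u) \assign (a \cdot u) \ast v$ are two phrasings of the same idea; and for the remaining direction you go $(\mathrm{i}) \Rightarrow (\mathrm{v})$ directly while the paper goes $(\mathrm{i}) \Rightarrow (\mathrm{ii})$ through Theorem~\ref{thm: cyclic modules}, but both amount to choosing a section $h$ of the canonical surjection from $\A$ onto the cyclic module and taking $u$ to be the $h$-image of the cyclic generator, then checking $u \cdot u = u$ and $h(\gamma(a)) = a \cdot u$.

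The one point you single out---that $u$ must be shown to be a \emph{dividing} element of $\A$ in the generalized-quantale setting---is well spotted: the paper's proof of $(\mathrm{i}) \Rightarrow (\mathrm{ii})$ does not verify this either, so your argument is already at parity with the paper's. Your proposed attack via $h(\0^{\Q}) = \0^{\A}$ is unlikely to close the gap as stated, since in a generalized quantale $\0$ is only the additive unit and need not be the bottom element; the issue simply vanishes in the dually integral case that is the paper's principal application.
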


\begin{proof}
  The equivalence (ii) $\Leftrightarrow$ (iii) follows immediately from Lemma~\ref{lemma: gamma u}. The equivalence (iii) $\Leftrightarrow$ (iv) is Lemma~\ref{lemma: gamma equals gamma u}. The equivalence (iv) $\Leftrightarrow$ (v) holds because $u \ast v = v$ implies that $\gamma_{v}(\1) = (\1 \ast v) \sast v = (\1 \ast (u \ast v)) \sast v = (u \ast v) \sast v = \gamma_{v}(u)$, and $\gamma_{v}(u) = \gamma_{v}(\1)$ implies that $u \ast v = ((u \ast v) \sast v) \ast v = \gamma_{v}(u) \ast v = \gamma_{v}(\1) \ast v = ((\1 \ast v) \sast v) \ast v = v$.

  To prove the implication (i) $\Rightarrow$ (ii), let $\Q$ be a cyclic projective $\A$-module. By Theorem~\ref{thm: cyclic modules}, up to isomorphism the module $\Q$ has the form $\A_{\gamma}$ for some structural nucleus $\gamma$ on $\A$. But $\gamma\colon \A \to \A_{\gamma}$ is a surjective homomorphism of modules, so by projectivity there is a homomorphism of modules $h\colon \A_{\gamma} \to \A$ such that $\gamma \circ h = \id_{\A_{\gamma}}$. Let $u \assign h(\gamma(\1))$. Then $u \cdot u = h(\gamma(\1)) \cdot h(\gamma(\1)) = h(h(\gamma(\1)) \ast_{\gamma} \gamma(\1)) = h(\gamma(h(\gamma(\1)) \cdot \gamma(\1))) = h(\gamma(h(\gamma(\1)) \cdot \1)) = h(\gamma(h(\gamma(\1)))) = h(\gamma(\1)) = u$. Moreover, $h(\gamma(a)) = h(\gamma(a \cdot \1)) = h(\gamma(a \cdot \gamma(\1))) = h(a \ast_{\gamma} \gamma(\1)) = a \ast h(\gamma(\1)) = a \cdot u$, so $h[\A_{\gamma}] = \A \cdot u$. But $h$ is an order embedding, since $\gamma \circ h = \id_{\A_{\gamma}}$, hence $\A_{\gamma}$ and $\A \cdot u$ are isomorphic modules.

  To prove the implication (ii) $\Rightarrow$ (i), we show that a module of the form $\A \cdot u$ for some idempotent $u \in \A$ is projective. Consider a surjective homomorphism of $\A$-modules $h\colon \P \onto \Q$ and a homomorphism $f\colon \A \cdot u \to \Q$. Because $h$ is surjective, there is some $v \in \P$ such that $h(v) = f(u)$. Consider the map $g\colon A \cdot u \to P$ defined as $g\colon a \cdot u \mapsto (a \cdot u) \ast v$. Clearly $(h \circ g) (a \cdot u) = h((a \cdot u) \ast v) = (a \cdot u) \ast h(v) = (a \cdot u) \ast f(u) = f((a \cdot u) \ast u) = f(a \cdot (u \cdot u)) = f(a \cdot u)$, hence $h \circ g = f$. Moreover, $g$ is a homomorphism of $\A$-modules: for each term $t$ and tuple $\tuple{a} \in \A$ we have $g(t^{\A}(\tuple{a}) \cdot u) = (t^{A}(\tuple{a}) \cdot u) \ast v = t^{\A}(\tuple{a} \cdot u) \ast v= t^{\P}((\tuple{a} \cdot u) \ast v) = t^{\P}(g(\tuple{a} \cdot u))$.
\end{proof}

  We can obtain a description of the cyclic projective $\alg{M}$-posets and $\alg{M}$-acts for a given pomonoid $\alg{M}$ from the above theorem. Recall from Lemma~\ref{lemma: posets to acts} that each $\alg{M}$-poset $X$ extends to an $\alg{M}$-act $\Free{X}$.

\begin{lemma} \label{lemma: cyclic act to module}
  An $\alg{M}$-poset $X$ is $u$-cyclic if and only if the $\alg{M}$-act $\Free{X}$ is $\unit_{X}(u)$-cyclic. An $\alg{M}$-act $\alg{L}$ is $u$-cyclic if and only if the $\Free{\alg{M}}$-module $\alg{L}$ is $u$-cyclic.
\end{lemma}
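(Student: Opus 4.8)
The plan is to prove the two biconditionals separately, in each case reducing cyclicity to a statement about generation inside a (generalized) quantale. Throughout I use three facts: that $\Free{X}$ and $\Free{M}$ are generated as (generalized) quantales by the images of their unit maps; the identity $t^{\A}(\tuple{a}) \ast x = t^{\Q}(\tuple{a} \ast x)$ valid in any $\A$-module $\Q$ for every (generalized) quantale term $t$; and the defining relations of the extended actions, namely $a \ast \unit_{X}(x) = \unit_{X}(a \ast x)$ from Lemma~\ref{lemma: posets to acts} and $\unit_{M}(a) \ast^{\sharp} x = a \ast x$ from the construction preceding Theorem~\ref{thm: m and fm isomorphic}.

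For the act-versus-module equivalence, the key computation is that in the $\Free{\alg{M}}$-module $\alg{L}$ supplied by Theorem~\ref{thm: m and fm isomorphic} one has $\Free{M} \ast^{\sharp} u = \Sg^{\alg{L}}(M \ast u)$. Writing an arbitrary $b \in \Free{M}$ as $b = t^{\Free{M}}(\unit_{M}(\tuple{a}))$ for some (generalized) quantale term $t$ and tuple $\tuple{a}$ over $\alg{M}$ — possible since $\unit_{M}[M]$ generates $\Free{M}$ — the module identities give $b \ast^{\sharp} u = t^{\Free{M}}(\unit_{M}(\tuple{a})) \ast^{\sharp} u = t^{\alg{L}}(\unit_{M}(\tuple{a}) \ast^{\sharp} u) = t^{\alg{L}}(\tuple{a} \ast u)$, an element of $\Sg^{\alg{L}}(M \ast u)$; conversely every $t^{\alg{L}}(\tuple{a} \ast u) \in \Sg^{\alg{L}}(M \ast u)$ equals $t^{\Free{M}}(\unit_{M}(\tuple{a})) \ast^{\sharp} u \in \Free{M} \ast^{\sharp} u$. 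Since $\alg{L}$ is $u$-cyclic as an $\Free{\alg{M}}$-module exactly when $L = \Free{M} \ast^{\sharp} u$, and $u$-cyclic as an $\alg{M}$-act exactly when $L = \Sg^{\alg{L}}(M \ast u)$, the two notions agree.

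For the poset-versus-act equivalence, the forward direction is immediate: if $X = M \ast u$ then $M \ast \unit_{X}(u) = \{\unit_{X}(a \ast u) : a \in \alg{M}\} = \unit_{X}[M \ast u] = \unit_{X}[X]$, and $\unit_{X}[X]$ generates $\Free{X}$. For the converse, suppose $\Free{X}$ is generated as a (generalized) quantale by $M \ast \unit_{X}(u) = \unit_{X}[M \ast u]$, and put $Y \assign M \ast u$; since $\unit_{X}(x) \in \Sg^{\Free{X}}(\unit_{X}[Y])$ for every $x \in X$, it suffices to establish the \emph{irredundancy} of the canonical generators of a free (generalized) quantale: if $\unit_{X}(x)$ lies in the subalgebra generated by $\unit_{X}[Y]$, then $x \in Y$, whence $Y = X$. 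I expect this to be the main obstacle. The intended argument is the usual one for free algebras, carried out via the universal property: were $\unit_{X}(x_{0}) = t^{\Free{X}}(\unit_{X}(\tuple{y}))$ with all entries of $\tuple{y}$ in $Y$ but $x_{0} \notin Y$, one would derive a contradiction by applying to this identity the endomorphism of $\Free{X}$ induced, via freeness, by an order-preserving self-map of $X$ that fixes $Y$ pointwise but relocates $x_{0}$ — such an endomorphism is forced to fix $t^{\Free{X}}(\unit_{X}(\tuple{y}))$ yet is free to move $\unit_{X}(x_{0})$. (As in Theorem~\ref{thm: m and fm isomorphic} one tacitly works where the unit map is an order-embedding, which is what makes enough such self-maps available.) Granting this, all three cyclicity notions — for $\alg{M}$-posets, $\alg{M}$-acts, and $\Free{\alg{M}}$-modules — are thereby matched up, which is the content of the lemma.
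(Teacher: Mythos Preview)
Your argument for the second biconditional (act versus module) is correct and is essentially the paper's own computation. The forward direction of the first biconditional also matches the paper.

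For the converse of the first biconditional, both you and the paper reduce to the irredundancy of the free generators: if $\Free{X}$ is generated as a (generalized) quantale by $\unit_X[Y]$ for some $Y \subseteq X$, then each $\unit_X(x)$ already belongs to $\unit_X[Y]$. The paper simply asserts this (``this implies that each of the free generators $\unit_X(x)$ \dots\ has the form $\unit_X(x) = a \ast \unit_X(u)$''); you go further and try to justify it. However, your proposed mechanism --- an order-preserving self-map of $X$ fixing $Y$ pointwise but moving some $x_0 \notin Y$ --- need not exist, and the order-embedding hypothesis on $\unit_X$ does not supply it. Take $X = \{a, b, c\}$ with $a < c$, $b < c$, and $a, b$ incomparable; put $Y = \{a, b\}$ and $x_0 = c$. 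Any order-preserving $f\colon X \to X$ fixing $a$ and $b$ must send $c$ to an element above both $a$ and $b$, hence to $c$ itself. So your remark that the embedding assumption ``makes enough such self-maps available'' is not correct.

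The repair is to abandon self-maps of $X$ and instead compare two order-preserving maps $h, h'\colon X \to \Q$ into a richer target $\Q \in \class{K}$ --- for instance $\Q = \Free{X}$ itself --- that agree on $Y$ but differ at $x_0$; their unique extensions $h^\sharp, h'^\sharp\colon \Free{X} \to \Q$ then agree on $\Sg(\unit_X[Y])$ but differ at $\unit_X(x_0)$, yielding the desired contradiction. In the obstruction above one may take $h = \unit_X$ and $h'(c) = \unit_X(a) \vee \unit_X(b)$, which differs from $\unit_X(c)$ precisely because $\Free{X}$ is free over the \emph{poset} $X$ and no relation collapses $\unit_X(c)$ to that join. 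Your overall plan is the same as the paper's, but the step you correctly flag as the main obstacle needs a target larger than~$X$.
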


\begin{proof}
  If the $\alg{M}$-poset $X$ is $u$-cyclic, then the $\alg{M}$-act $\Free{X}$ is $\unit_{X}(u)$-cyclic because $\Free{X}$ is generated as a (generalized) quantale by $\unit_{X}[X]$ and in $\Free{X}$ we have $a \ast \unit_{X}(u) = \unit_{X}(a \ast u)$. Conversely, if the $\alg{M}$-act $\Free{X}$ is $\unit_{X}(u)$-cyclic, then it is generated as a (generalized) quantale by $M \ast \unit_{X}(u)$, but this implies that each of the free generators $\unit_{X}(x)$ with $x \in X$ has the form $\unit_{X}(x) = a \ast \unit_{X}(u)$ for some $a \in \alg{M}$. But then $x = a \ast u$ in the $\alg{M}$-poset $X$, proving that this $\alg{M}$-poset is cyclic.

  An $\alg{M}$-act $\alg{L}$ is $u$-cyclic if and only if each $x \in \alg{L}$ has the form $x = t^{\alg{L}}(\tuple{a} \ast u)$ for some (generalized) quantale term $t$ and some $\tuple{a} \in \alg{M}$. But this holds if and only if $x = t^{\Free{\alg{M}}}(\tuple{a}) \ast u$ in the expansion of $\alg{L}$ to an $\Free{\alg{M}}$-module.
\end{proof}

\begin{theorem}
  Let $\alg{M}$ be a pomonoid such that the unit map $M \to \Free{M}$ is an order-embedding. Then the cyclic projective $\alg{M}$-acts are precisely those isomorphic to $\Free{\alg{M}} \cdot u$ for some dividing idempotent $u \in \Free{\alg{M}}$.
\end{theorem}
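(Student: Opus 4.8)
The plan is to transfer the classification of cyclic projective $\Free{\alg{M}}$-modules obtained in Theorem~\ref{thm: cyclic projective modules} across the categorical isomorphism of Theorem~\ref{thm: m and fm isomorphic}. First I would observe that, since the unit map $M \to \Free{M}$ is an order-embedding, Theorem~\ref{thm: m and fm isomorphic} gives an isomorphism between the category of $\alg{M}$-acts and the category of $\Free{\alg{M}}$-modules which is the identity on underlying (generalized) quantales and which sends the $\alg{M}$-action of an act to its canonical extension $\ast^{\sharp}$ to an $\Free{\alg{M}}$-action. In particular a map between two such structures is a homomorphism of $\alg{M}$-acts if and only if it is a homomorphism of the corresponding $\Free{\alg{M}}$-modules: a homomorphism of (generalized) quantales commuting with the action of $\alg{M}$ automatically commutes with the action of all of $\Free{\alg{M}}$, because $\Free{\alg{M}}$ is generated as a (generalized) quantale by $\embed[\alg{M}]$ (Theorem~\ref{thm: free aqm}).

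Next I would note that this isomorphism of categories preserves and reflects both the surjectivity of homomorphisms (these are just the surjective maps among the relevant homomorphisms, and the underlying sets are unchanged) and the lifting diagram defining projectivity. Hence an $\alg{M}$-act is projective if and only if the associated $\Free{\alg{M}}$-module is projective. Combining this with Lemma~\ref{lemma: cyclic act to module}, which says that an $\alg{M}$-act $\alg{L}$ is $u$-cyclic exactly when the $\Free{\alg{M}}$-module $\alg{L}$ is $u$-cyclic, we conclude that an $\alg{M}$-act is cyclic projective if and only if the corresponding $\Free{\alg{M}}$-module is cyclic projective.

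Finally I would invoke Theorem~\ref{thm: cyclic projective modules} with $\A \assign \Free{\alg{M}}$ — legitimate because $\Free{\alg{M}}$ is distributively generated by Theorem~\ref{thm: free aqm} — to conclude that the cyclic projective $\Free{\alg{M}}$-modules are precisely those isomorphic (as $\Free{\alg{M}}$-modules, equivalently as $\alg{M}$-acts) to $\Free{\alg{M}} \cdot u$ for some dividing idempotent $u \in \Free{\alg{M}}$. Since $\Free{\alg{M}} \cdot u$ is $u$-cyclic as an $\Free{\alg{M}}$-module, Lemma~\ref{lemma: cyclic act to module} also ensures that it is a cyclic $\alg{M}$-act, so the description is internally consistent.

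The only point that is not pure bookkeeping — and hence the "main obstacle" in the very mild sense applicable here — is the verification that the forgetful functor of Theorem~\ref{thm: m and fm isomorphic} transports projectivity in both directions; this reduces to the observation above that $\alg{M}$-act homomorphisms and $\Free{\alg{M}}$-module homomorphisms between the relevant objects literally coincide, after which everything else is a direct appeal to the cited results.
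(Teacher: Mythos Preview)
Your proposal is correct and follows essentially the same approach as the paper: transfer the classification of cyclic projective $\Free{\alg{M}}$-modules (Theorem~\ref{thm: cyclic projective modules}) across the isomorphism of categories from Theorem~\ref{thm: m and fm isomorphic}, using that this isomorphism preserves and reflects surjectivity and hence projectivity. Your write-up is simply more explicit than the paper's one-sentence proof, spelling out the use of Lemma~\ref{lemma: cyclic act to module} for cyclicity and of Theorem~\ref{thm: free aqm} for distributive generation.
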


\begin{proof}
  This follows immediately from the isomorphism between the category of $\alg{M}$-acts and the category of $\Free{\alg{M}}$-module (Theorem~\ref{thm: m and fm isomorphic}), which preserves and reflects surjectivity of morphisms and therefore also the projectivity of objects.
\end{proof}

\begin{example} \label{ex: dm fm 3}
  In Example~\ref{ex: dm fm 2} we saw that $\DM{Fm}$ and $\DM{Eq}$ are $\DM{\alg{M}}$-modules for $\alg{M} \assign (\End \Fm)$. They are both cyclic $\DM{\alg{M}}$-modules: the $\alg{M}$-posets $Fm$ and $Eq$ are cyclic, the cyclic generators being respectively any variable $x \in \Fm$ and any equation $x \equals y$ with distinct variables $x, y \in Fm$, therefore by Lemma~\ref{lemma: cyclic act to module} the $\alg{M}$-acts $\DM{Fm}$ and $\DM{Eq}$, hence also the $\DM{\alg{M}}$-modules $\DM{Fm}$ and $\DM{Eq}$, are cyclic, the cyclic generators being respectively $\down [x] = \{ [x], [] \}$ and $\down [x \equals y] = \{ [x \equals y], [] \}$.

  We now use Theorem~\ref{thm: cyclic projective modules} to prove that these two cyclic $\DM{\alg{M}}$-modules are projective. Consider first the case of $\DM{Fm}$. Take $\sigma_{x} \in (\End \Fm)$ to be the substitution with $\sigma_{x}(y) \assign x$ for each variable $y$, and let $u \assign \down [\sigma_{x}]$ and $v \assign \down [x]$. Because $\DM{\alg{M}}$ is dually integral, $u$ is a dividing element. Clearly $u \ast v = \down[\sigma_{x}] \ast \down [x] = \down [\sigma_{x} \ast x] = \down [x] = v$.

  It remains to prove that $a \cdot u = ((a \ast v) \sast v) \cdot u$ for all $a \in \DM{\alg{M}}$. The inequality $a \cdot u \leq ((a \ast v) \sast v) \cdot u$ always holds. To prove the inequality
\begin{align*}
  ((a \ast v) \sast v) \cdot u \leq a \cdot u,
\end{align*}
  observe that each element of $\DM{\alg{M}}$ is a join of elements of the form $[\sigma_{1}; \dots; \sigma_{n}]$. Since products distribute over joins on the left, it therefore suffices to show that
\begin{align*}
  \down [\sigma_{1}; \dots; \sigma_{n}] \leq (a \ast v) \sast v \implies \down [\sigma_{1}; \ldots; \sigma_{n}] \cdot \down [\sigma_{x}] \leq a \cdot \down [\sigma_{x}],
\end{align*}
  i.e.\ that
\begin{align*}
  \down [\sigma_{1}(x); \dots; \sigma_{n}(x)] \leq a \ast \down [x] \implies \down [\sigma_{1}; \ldots; \sigma_{n}] \cdot \down [\sigma_{x}] \leq a \cdot \down [\sigma_{x}].
\end{align*}
  If $\down [\sigma_{1}(x); \dots; \sigma_{n}(x)] \leq a \ast \down [x]$ for some $a \in \DM{\alg{M}}$, then there is multiset $[\tau_{1}; \dots; \tau_{k}] \in a$ such that $[\sigma_{1}(x); \dots; \sigma_{n}(x)] \leq [\tau_{1}(x); \dots; \tau_{k}(x)]$. But then $[\sigma_{1}; \dots; \sigma_{n}] \cdot [\sigma_{x}] = [\sigma_{1} \circ \sigma_{x}; \dots; \sigma_{n} \circ \sigma_{x}] \leq [\tau_{1} \circ \sigma_{x}; \dots; \tau_{k} \circ \sigma_{x}]$, so $\down [\sigma_{1}; \dots; \sigma_{n}] \cdot \down [\sigma_{x}] \leq \down [\tau_{1}; \dots; \tau_{k}] \cdot \down [\sigma_{x}] \leq a \cdot \down [\sigma_{x}]$, proving that $\DM{Fm}$ is indeed a cyclic projective $\DM{\alg{M}}$-module.

  To prove that $\DM{Eq}$ is a cyclic projective $\DM{\alg{M}}$-module, we similarly take $u \assign \down [\sigma_{x,y}]$ and $v \assign \down [x]$, where $\sigma_{x, y}$ is a substitution such that $\sigma_{x, y}(x) = x$, $\sigma_{x, y}(y) = y$, and $\sigma_{x, y}(z) \in \{ x, y \}$ for variables $z$ other than $x$, $y$. The rest of the argument is entirely analogous.
\end{example}

\section{Equivalence of substructural consequence relations}
\label{sec: algebraizability}

  It remains to put the framework developed above to logical use. As in~\cite{GT}, we prove an abstract analogue of Blok and Pigozzi's characterization of algebraizable logics~\cite[Theorem~3.7(ii)]{BP89}.

\begin{theorem}
  Let $\alg{A}$ be a (generalized) additive quantale with multiplication. Then an $\alg{A}$-module $\alg{P}$ is projective if and only if each embedding of modules $f\colon \alg{P}_{\gamma} \into \alg{Q}_{\delta}$ for each pair of structural nuclei $\gamma$ on $\alg{P}$ and $\delta$ on $\alg{Q}$ (where $\alg{Q}$ is an $\alg{A}$-module) is induced by a homomorphism of modules $\tau\colon \alg{P} \to \alg{Q}$ in the sense that $f \circ \gamma = \delta \circ \tau$.
\end{theorem}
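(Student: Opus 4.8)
The plan is to prove the two implications separately, following the strategy of the original Galatos--Tsinakis theorem adapted to our generalized setting.

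\medskip

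\textbf{From projectivity to the lifting property.} Suppose $\alg{P}$ is projective and let $f\colon \alg{P}_{\gamma} \into \alg{Q}_{\delta}$ be an embedding of modules. First I would observe that $\delta\colon \alg{Q} \onto \alg{Q}_{\delta}$ is a surjective homomorphism of $\alg{A}$-modules (as in Section~\ref{sec: nuclei}), and that $f \circ \gamma\colon \alg{P} \to \alg{Q}_{\delta}$ is a homomorphism of modules. By projectivity of $\alg{P}$, this homomorphism lifts through $\delta$ to a homomorphism $\tau\colon \alg{P} \to \alg{Q}$ with $\delta \circ \tau = f \circ \gamma$, which is exactly the required conclusion. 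This direction is essentially immediate once one unwinds the definitions.

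\medskip

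\textbf{From the embedding condition to projectivity.} For the converse, assume every embedding $f\colon \alg{P}_{\gamma} \into \alg{Q}_{\delta}$ is induced by a homomorphism $\tau$, and let $g\colon \alg{R} \onto \alg{Q}$ be a surjective homomorphism of modules and $h\colon \alg{P} \to \alg{Q}$ an arbitrary homomorphism; we must lift $h$ to $h^{\sharp}\colon \alg{P} \to \alg{R}$ with $g \circ h^{\sharp} = h$. The key maneuver, following~\cite{GT}, is to encode this lifting problem as an embedding between module quotients. I would form the submodule $\alg{R}'$ of the product $\alg{P} \times \alg{R}$ consisting of the ``matching pairs'' $\set{(x, r)}{h(x) = g(r)}$, together with the projection $\pi_1\colon \alg{R}' \to \alg{P}$, which is surjective because $g$ is. Let $\gamma$ be the identity nucleus on $\alg{P}$ (so $\alg{P}_{\gamma} = \alg{P}$), and let $\delta$ be the structural nucleus on $\alg{R}'$ corresponding to the kernel congruence of $\pi_1$, i.e.\ $\delta$ is the composite of the quotient map $\alg{R}' \onto \alg{R}'/\ker\pi_1 \cong \alg{P}$ with a section into $\alg{R}'$ sending each fibre to its join; then $(\alg{R}')_{\delta} \cong \alg{P}$ and the induced map $\alg{P} \cong \alg{P}_{\gamma} \into (\alg{R}')_{\delta}$ is the (iso, hence) embedding witnessing the hypothesis. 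One must check $\delta$ is genuinely structural --- here Lemma~\ref{lemma: distributively generated nucleus} reduces the verification to $\iota[\alg{A}_{\mathrm{d}}]$, and structurality with respect to distributive elements follows from $\pi_1$ being a module homomorphism. Applying the hypothesis yields a homomorphism $\tau\colon \alg{P} \to \alg{R}'$ with $\delta \circ \tau = f \circ \gamma = f$; composing with the second projection $\pi_2\colon \alg{R}' \to \alg{R}$ gives $h^{\sharp} \assign \pi_2 \circ \tau$, and the matching-pair condition defining $\alg{R}'$ forces $g \circ h^{\sharp} = h$ after verifying that $\delta$ interacts correctly with $\pi_2$.

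\medskip

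\textbf{Main obstacle.} The delicate point is the construction of the nucleus $\delta$ on $\alg{R}'$ and the verification that the embedding it induces really has the form demanded by the hypothesis, together with checking that composing the induced $\tau$ with $\pi_2$ produces a genuine lift rather than merely something that agrees with $h$ up to $\delta$. In the idempotent setting of~\cite{GT} this bookkeeping is already somewhat intricate; in our generalized-quantale setting one must additionally be careful that the join-of-fibre operation used to define $\delta$ is well-defined (the fibres of $\pi_1$ on $\alg{R}'$ are non-empty, so non-empty joins suffice) and that $\delta$ preserves the additive monoid structure in the sense required of a nucleus. I expect the bulk of the work to lie in these compatibility checks; the overall architecture of the argument is dictated by the classical proof.
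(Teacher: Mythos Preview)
Your forward direction matches the paper exactly. For the converse, however, you take a substantially more elaborate route than the paper does. The paper's argument is a two-line image factorization: given a surjective module homomorphism $g\colon \alg{R} \onto \alg{Q}$ and any $h\colon \alg{P} \to \alg{Q}$, first identify $g$ (up to isomorphism) with a nucleus quotient $\delta\colon \alg{Q}' \onto \alg{Q}'_{\delta}$, then factor $h\colon \alg{P} \to \alg{Q}'_{\delta}$ as a surjection $\gamma\colon \alg{P} \onto \alg{P}_{\gamma}$ followed by an embedding $f\colon \alg{P}_{\gamma} \into \alg{Q}'_{\delta}$. The hypothesis applied to this $f$ immediately yields $\tau\colon \alg{P} \to \alg{Q}'$ with $\delta \circ \tau = f \circ \gamma = h$, and $\tau$ is the desired lift. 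No pullback, no auxiliary module $\alg{R}'$, no fibre-join construction is needed.

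Your pullback argument is correct in outline and would also go through, but it requires several compatibility checks (that $\alg{R}'$ is a submodule, that the fibre-join map is a structural nucleus, that $\pi_{1} \circ \tau = \id$ can be extracted from $\delta \circ \tau = f$, and that the matching-pair condition then forces $g \circ h^{\sharp} = h$). One small wrinkle: you appeal to Lemma~\ref{lemma: distributively generated nucleus} to reduce the structurality check for $\delta$ to $\iota[\Ad]$, but the theorem as stated does not assume $\alg{A}$ is distributively generated, so that lemma is not available. This is not fatal, since structurality of the nucleus associated to any surjective module homomorphism follows directly from $g(a \ast x) = a \ast g(x)$ without any reduction step; you should simply argue that way instead. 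Still, the paper's factorization approach avoids all of this bookkeeping entirely.
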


\begin{proof}
  The left-to-right implication is immediate: it suffices to apply the definition of projectivity to the homomorphism $f \circ \gamma\colon \alg{P} \to \alg{Q}_{\delta}$ and the surjective homomorphism $\delta\colon \alg{Q} \to \alg{Q}_{\delta}$. Conversely, each homomorphism $h\colon \alg{P} \to \alg{Q}_{\delta}$ decomposes into a surjective homomorphism, without loss of generality of the form $\gamma\colon \alg{P} \to \alg{P}_{\gamma}$ for some structural nucleus $\gamma$ on $\alg{P}$, followed by an embedding $f\colon \alg{P}_{\gamma} \into \alg{Q}_{\delta}$. By assumption, there is a homomorphism $\tau\colon \alg{P} \to \alg{Q}$ such that $f \circ \gamma = \delta \circ \tau$, so $h = f \circ \gamma = \delta \circ h^{\sharp}$ for $h^{\sharp} \assign \tau$, proving that $\alg{P}$ is projective.
\end{proof}

\begin{corollary} \label{cor: algebraizability}
  Let $\alg{P}$ and $\alg{Q}$ be projective $\alg{A}$-modules, and let $\gamma$ and $\delta$ be structural nuclei on $\alg{P}$ and $\alg{Q}$, respectively. Then each isomorphism of modules given by $f\colon \alg{P}_{\gamma} \to \alg{Q}_{\delta}$ and $g\colon \alg{Q}_{\delta} \to \alg{P}_{\gamma}$ is induced by a pair of homomorphisms of modules $\tau\colon \alg{P} \to \alg{Q}$ and $\rho\colon \alg{Q} \to \alg{P}$ in the sense that $f \circ \gamma = \delta \circ \tau$ and $g \circ \delta = \gamma \circ \rho$:
\[
\begin{tikzcd}
  & \alg{P} \arrow[r,"\tau",shift left,dashed] \arrow[d,"\gamma",->>] & \alg{Q} \arrow[l,"\rho",shift left,dashed] \arrow[d,"\delta",->>] \\
  & \alg{P}_{\gamma} \arrow[r,"f",shift left] & \alg{Q}_{\delta} \arrow[l,"g",shift left]
\end{tikzcd}
\]
\end{corollary}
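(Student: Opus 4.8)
The plan is to derive this as an immediate double application of the preceding theorem, using nothing beyond the fact that an isomorphism is in particular an embedding. The preceding theorem characterizes projectivity of an $\A$-module $\P$ by the property that every embedding $f\colon \P_{\gamma} \into \Q_{\delta}$ (for structural nuclei $\gamma$ on $\P$, $\delta$ on $\Q$, with $\Q$ an $\A$-module) is induced by a homomorphism $\tau\colon \P \to \Q$ in the sense that $f \circ \gamma = \delta \circ \tau$. So the whole corollary reduces to invoking this twice, once for $f$ and once for $g$.

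First I would observe that since $f\colon \P_{\gamma} \to \Q_{\delta}$ is an isomorphism of modules, it is in particular an embedding of modules, and that $\gamma$ and $\delta$ are structural nuclei on the $\A$-modules $\P$ and $\Q$ respectively. As $\P$ is projective, the theorem applies and yields a homomorphism of modules $\tau\colon \P \to \Q$ with $f \circ \gamma = \delta \circ \tau$, which is exactly the first required identity and the upper-left triangle of the diagram.

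Symmetrically, $g\colon \Q_{\delta} \to \P_{\gamma}$ is an isomorphism, hence an embedding of modules, and $\delta$, $\gamma$ are structural nuclei on the $\A$-modules $\Q$, $\P$. Since $\Q$ is projective, the theorem (with the roles of $\P$ and $\Q$, and of $\gamma$ and $\delta$, interchanged) yields a homomorphism of modules $\rho\colon \Q \to \P$ with $g \circ \delta = \gamma \circ \rho$, giving the second identity and the remaining triangle. Together $\tau$ and $\rho$ are the desired pair, and this completes the argument.

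There is essentially no obstacle here: the content is entirely carried by the preceding theorem, and the only point worth spelling out is that the hypotheses of that theorem are met for both $f$ and $g$ because isomorphisms of modules are embeddings of modules. (One could additionally remark, though it is not asked for, that the pair $\tau,\rho$ need not itself be a mutually inverse pair of homomorphisms of $\P$ and $\Q$; they induce mutually inverse maps only after composing with the nuclei.)
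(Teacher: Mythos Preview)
Your proposal is correct and matches the paper's intended argument: the corollary is stated without proof precisely because it is the obvious double application of the preceding theorem, once to the embedding $f$ using projectivity of $\alg{P}$ and once to the embedding $g$ using projectivity of $\alg{Q}$. Your parenthetical remark that $\tau$ and $\rho$ need not themselves be mutually inverse is also apt, though indeed not required.
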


  The following theorem now restates the above corollary in the language of consequence relations.

\begin{theorem} \label{thm: algebraizability}
  Let $\alg{P}$ and $\alg{Q}$ be projective $\alg{A}$-modules, and let $\gamma$ and $\delta$ be structural nuclei on $\alg{P}$ and $\alg{Q}$, respectively. Then $\alg{P}_{\gamma}$ and $\alg{Q}_{\gamma}$ are isomorphic $\alg{A}$-modules if and only if there are homomorphisms of modules $\tau\colon \alg{P} \to \alg{Q}$ and $\rho\colon \alg{Q} \to \alg{P}$ such that for all $\Gamma, \Delta \in \alg{P}$ and $E, F \in \alg{Q}$
\begin{align*}
  \makebox[10pt]{$\Gamma$} \makebox[15pt]{$\vdash_{\gamma}$} \makebox[7.5pt]{$\Delta$} & \iff \tau(\makebox[7.6pt]{$\Gamma$}) \vdash_{\delta} \tau(\Delta), & & \makebox[10pt]{$E$} \makebox[15pt]{$\vdash_{\delta}$} (\tau \circ \rho)(\makebox[7.6pt]{$E$}) \vdash_{\delta} E, \\
  \makebox[10pt]{$E$} \makebox[15pt]{$\vdash_{\delta}$} \makebox[7.5pt]{$F$} & \iff \rho(\makebox[7.6pt]{$E$}) \vdash_{\gamma} \rho(F), & & \makebox[10pt]{$\Gamma$} \makebox[15pt]{$\vdash_{\gamma}$} (\rho \circ \tau)(\makebox[7.6pt]{$\Gamma$}) \vdash_{\gamma} \Gamma.
\end{align*}
  Moreover, $\tau$ and $\rho$ satisfy the first line above if and only if they satisfy the second.
\end{theorem}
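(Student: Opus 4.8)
The plan is to reduce everything to order-theoretic statements about the quotient modules $\alg{P}_{\gamma}$ and $\alg{Q}_{\delta}$ and then feed these into Corollary~\ref{cor: algebraizability}. Two standing observations do most of the work. First, since $\gamma$ is a closure operator, $\Delta \le \gamma(\Gamma)$ iff $\gamma(\Delta) \le \gamma(\Gamma)$, so that
\[
  \Gamma \vdash_{\gamma} \Delta \iff \gamma(\Delta) \le^{\alg{P}_{\gamma}} \gamma(\Gamma),
\]
and dually for $\vdash_{\delta}$ in $\alg{Q}_{\delta}$; thus a biconditional like $\Gamma \vdash_{\gamma} \Delta \iff \tau(\Gamma) \vdash_{\delta} \tau(\Delta)$ just says that the assignment $\gamma(\Gamma) \mapsto \delta(\tau(\Gamma))$ is a well-defined order-embedding, and a round-trip like $E \vdash_{\delta} (\tau \circ \rho)(E) \vdash_{\delta} E$ just says $\delta((\tau \circ \rho)(E)) = \delta(E)$. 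Second, a module homomorphism $\alg{P} \to \alg{Q}_{\delta}$ that factors set-theoretically through the surjective homomorphism $\gamma \colon \alg{P} \to \alg{P}_{\gamma}$ has a factoring map that is automatically a module homomorphism; a module homomorphism which is an order-embedding is injective; and one which is an order-isomorphism is an isomorphism of modules.

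For the forward direction I would take mutually inverse isomorphisms $f \colon \alg{P}_{\gamma} \to \alg{Q}_{\delta}$ and $g \colon \alg{Q}_{\delta} \to \alg{P}_{\gamma}$, use projectivity of both $\alg{P}$ and $\alg{Q}$ to invoke Corollary~\ref{cor: algebraizability}, obtaining module homomorphisms $\tau \colon \alg{P} \to \alg{Q}$ and $\rho \colon \alg{Q} \to \alg{P}$ with $f \circ \gamma = \delta \circ \tau$ and $g \circ \delta = \gamma \circ \rho$, and then verify the four conditions by direct computation: the two biconditionals follow from the chain $\Gamma \vdash_{\gamma} \Delta \iff \gamma(\Delta) \le \gamma(\Gamma) \iff f(\gamma(\Delta)) \le f(\gamma(\Gamma)) \iff \delta(\tau(\Delta)) \le \delta(\tau(\Gamma)) \iff \tau(\Gamma) \vdash_{\delta} \tau(\Delta)$ and its $g$-analogue, while the round-trips follow from $\delta((\tau \circ \rho)(E)) = f(g(\delta(E))) = \delta(E)$ and $\gamma((\rho \circ \tau)(\Gamma)) = g(f(\gamma(\Gamma))) = \gamma(\Gamma)$.

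The converse and the concluding ``moreover'' clause I would dispatch together, via the claim that if module homomorphisms $\tau$ and $\rho$ satisfy the two conditions on the first displayed line, then $f \colon \gamma(\Gamma) \mapsto \delta(\tau(\Gamma))$ and $g \colon \delta(E) \mapsto \gamma(\rho(E))$ are well-defined mutually inverse module isomorphisms with $f \circ \gamma = \delta \circ \tau$ and $g \circ \delta = \gamma \circ \rho$. Here the $\tau$-biconditional gives exactly that $f$ is well-defined and an order-embedding (its left-to-right direction yields well-definedness and order-preservation, its right-to-left direction yields order-reflection, hence injectivity), so $f$ is a module embedding; the round-trip $E \vdash_{\delta} (\tau \circ \rho)(E) \vdash_{\delta} E$ says $f(\gamma(\rho(E))) = \delta(E)$ for every $E$, which, since $f$ is injective, forces $\gamma \circ \rho$ to factor through $\delta$, so $g$ is a well-defined module homomorphism with $g \circ \delta = \gamma \circ \rho$ and $f \circ g = \id_{\alg{Q}_{\delta}}$; hence $f$ is a surjective module embedding, so an isomorphism with $g = f^{-1}$ and $g \circ f = \id_{\alg{P}_{\gamma}}$. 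Granting the claim, the converse of the main equivalence is immediate; the second displayed line follows by running the computation of the forward direction with these $f$ and $g$; and the reverse implication of the ``moreover'' clause follows by symmetry, interchanging $(\alg{P}, \gamma, \tau)$ with $(\alg{Q}, \delta, \rho)$.

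I expect no serious obstacle: the only genuine input is Corollary~\ref{cor: algebraizability}, used once in the forward direction and requiring both modules to be projective, whereas the converse and the ``moreover'' clause are elementary and use no projectivity at all. The one point demanding care is the bookkeeping of which half of each biconditional or round-trip delivers which property of $f$ and $g$ (well-definedness versus order-embedding versus surjectivity).
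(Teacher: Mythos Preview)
Your proposal is correct and follows the same overall strategy as the paper: use Corollary~\ref{cor: algebraizability} for the forward direction, and for the converse define $f \assign \delta \circ \tau$ and $g \assign \gamma \circ \rho$ on the quotients and check they are mutually inverse.

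The one organizational difference worth noting is how you handle the converse together with the ``moreover'' clause. The paper proves the converse using \emph{all four} displayed conditions (its chain of equivalences for $(g \circ f)(\Gamma) = \Gamma$ invokes both the $\rho$-biconditional and the $(\rho \circ \tau)$-round-trip from the second line), and then establishes the ``moreover'' clause by a separate direct computation. You instead prove the converse from the \emph{first line alone}: the $\tau$-biconditional makes $f$ a well-defined module embedding, the $(\tau \circ \rho)$-round-trip gives $f \circ g = \id$, hence $f$ is a bijective embedding with inverse $g$. This is slightly more economical, since once the first line alone yields an isomorphism, the second line follows by rerunning the forward computation, and the ``moreover'' clause drops out for free rather than requiring its own argument. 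Both routes are short; yours packages the logical dependencies a bit more tightly.
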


\begin{proof}
  If $f\colon \alg{P}_{\gamma} \to \alg{Q}_{\delta}$ and $g\colon \alg{Q}_{\delta} \to \alg{P}_{\gamma}$ form an isomorphism of modules, then by Corollary~\ref{cor: algebraizability} there are homomorphisms of modules$\tau\colon \alg{P} \to \alg{Q}$ and $\rho\colon \alg{Q} \to \alg{P}$ such that $f \circ \gamma = \delta \circ \tau$ and $g \circ \delta = \gamma \circ \rho$. The first equation implies that for all $\Gamma, \Delta \in \alg{P}$
\begin{align*}
  \Gamma \vdash_{\gamma} \Delta & \iff \gamma(\Delta) \leq \gamma(\Gamma) \\
  & \iff f(\gamma(\Delta)) \leq f(\gamma(\Gamma)) \\
  & \iff \delta(\tau(\Delta)) \leq \delta(\tau(\Gamma)) \\
  & \iff \tau(\Gamma) \vdash_{\delta} \tau(\Delta).
\end{align*}
  Likewise, the equation $g \circ \delta = \gamma \circ \rho$ implies that for all $E, F \in \alg{Q}$
\begin{align*}
  E \vdash_{\delta} F & \iff \rho(E) \vdash_{\gamma} \rho(F).
\end{align*}
  Finally, the fact that $f$ and $g$ are mutually inverse maps implies that
\begin{align*}
  (\gamma \circ \rho \circ \tau)(\Gamma) = (g \circ \delta \circ \tau)(\Gamma) = (g \circ f \circ \gamma)(\Gamma) = \gamma(\Gamma).
\end{align*}
  Therefore $\Gamma \vdash_{\gamma} \rho(\tau(\Gamma)) \vdash_{\gamma} \Gamma$, and likewise $E \vdash_{\delta} \tau(\rho(E)) \vdash_{\delta} E$.

  Conversely, if the required maps $\tau$ and $\rho$ exist, let $f \assign \delta \circ \tau\colon \alg{P}_{\gamma} \to \alg{Q}_{\delta}$ and $g \assign \gamma \circ \rho\colon \alg{Q}_{\delta} \to \alg{P}_{\gamma}$. It suffices to show that these are mutually inverse maps. But for all $\Gamma \in \alg{P}_{\gamma}$ and $\Delta \in \alg{P}$
\begin{align*}
  \Delta \leq (g \circ f)(\Gamma) & \iff \Delta \leq (\gamma \circ \rho \circ \delta \circ \tau)(\Gamma) \\
  & \iff (\rho \circ \delta \circ \tau)(\Gamma) \vdash_{\gamma} \Delta \\
  & \iff (\rho \circ \delta \circ \tau)(\Gamma) \vdash_{\gamma} (\rho \circ \tau)(\Delta) \\
  & \iff (\delta \circ \tau)(\Gamma) \vdash_{\delta} \tau(\Delta) \\
  & \iff \tau(\Gamma) \vdash_{\delta} \tau(\Delta) \\
  & \iff \Gamma \vdash_{\gamma} \Delta \\
  & \iff \Delta \leq \gamma(\Gamma) = \Gamma.
\end{align*}
  Thus $(g \circ f)(\Gamma) = \Gamma$. Likewise, $(f \circ g)(E) = E$ for all $E \in \alg{Q}_{\delta}$.

  The claim that the first line is equivalent to the second is entirely standard, but let us repeat it for the sake of being self-contained. If the first line holds, then
\begin{align*}
  \rho(E) \vdash_{\gamma} \rho(F) \iff (\tau \circ \gamma)(E) \vdash_{\delta} (\tau \circ \rho)(F) \iff E \vdash_{\delta} F,
\end{align*}
  and $\tau(\Gamma) \vdash_{\delta} (\tau \circ \rho)(\tau(\Gamma)) \vdash_{\delta} \tau(\Gamma)$ implies that $\Gamma \vdash_{\gamma} (\rho \circ \tau)(\Gamma) \vdash_{\gamma} \Gamma$. The converse implication is analogous.
\end{proof}

\begin{example}
  In Example~\ref{ex: dm fm 3} we proved that $\DM{Fm}$ and $\DM{Eq}$ are cyclic projective $\DM{\End \Fm}$-modules, so the above theorem in particular applies to the case of $\alg{P} \assign \DM{Fm}$ and $\alg{Q} \assign \DM{Eq}$. The cyclic generators of these modules are $\down [x]$ and $\down [x \equals y]$, respectively, where $x$ and $y$ are distinct variables in~$Fm$. The homomorphisms of modules $\tau\colon \alg{P} \to \alg{Q}$ and $\rho\colon \alg{Q} \to \alg{P}$ are thus uniquely determined by the following elements of $\Q$ and $\P$:
\begin{align*}
  & \boldsymbol{\tau}(x) \assign \tau(\down [x]), & & \boldsymbol{\rho}(x, y) \assign \rho(\down [x \equals y]),
\end{align*}
  which are respectively a downset of finite multisets of equations in the variable $x$ and a downset of finite multisets of formulas in the variables $x, y$. Conversely, any choice of $\boldsymbol{\tau}(x)$ and $\boldsymbol{\rho}(x, y)$ determines a pair of homomorphisms $\tau\colon \P \to \Q$ and $\rho\colon \Q \to \P$: the value of $\tau(\Gamma)$ for an arbitrary downset $\Gamma$ of finite multisets is computed as the union of the values of $\tau(\down [\gamma_{1}; \dots; \gamma_{n}])$ for all finite multisets $[\gamma_{1}; \dots; \gamma_{n}] \in \Gamma$, which are in turn computed as $\boldsymbol{\tau}(\gamma_{1}) + \dots + \boldsymbol{\tau}(\gamma_{n})$. Likewise for $\rho(E)$ for a general downset $E$ of finite multisets of equations, replacing $\boldsymbol{\tau}$ by~$\boldsymbol{\rho}$. In other words, the right-hand side of the equivalence in Theorem~\ref{thm: algebraizability} is a precise counterpart of the ordinary definition of algebraizability via the translations $\boldsymbol{\tau}(x)$ and $\boldsymbol{\rho}(x, y)$, except $\boldsymbol{\tau}(x)$ and $\boldsymbol{\rho}(x, y)$ are now downsets of finite multisets of equations or formulas, rather than sets of equations or formulas.
\end{example}

\end{document}